\theoremstyle{plain}
\newtheorem{theorem}{Theorem}[section]
\newtheorem{lemma}[theorem]{Lemma}
\newtheorem{proposition}[theorem]{Proposition}
\newtheorem{corollary}[theorem]{Corollary}
\theoremstyle{definition}
\newtheorem{remark}[theorem]{Remark}
\newtheorem{example}[theorem]{Example}
\numberwithin{equation}{section}
\newcommand\N{\mathbb{N}}
\newcommand\Z{\mathbb{Z}}
\newcommand\R{\mathbb{R}}
\newcommand\C{\mathbb{C}}
\newcommand\SES[5]{\begin{tikzcd}[ampersand replacement=\&] 0 \arrow{r} \& #1 \arrow{r}{#4} \& #2 \arrow{r}{#5} \& #3 \arrow{r} \& 0\end{tikzcd}}
\DeclareMathOperator\im{im}
\DeclareMathOperator\id{id}
\DeclareMathOperator\DN{DN}
\DeclareMathOperator\Proj{Proj}
\DeclareMathOperator\Ext{Ext}
\DeclareMathOperator\LS{LS}
\newcommand\ExtLS{\Ext_{\LS}}
\DeclareMathOperator\PLS{PLS}
\DeclareMathOperator\PLB{PLB}
\DeclareMathOperator\LB{LB}
\newcommand\ExtPLS{\Ext_{\PLS}}
\newcommand\compl{\subseteq_{c}}
\begin{document}

\title[A Pe\l czy\'nski-Vogt decomposition result for $(\PLS)$-spaces]{
A Pe\l czy\'nski-Vogt decomposition result for $(\PLS)$-spaces and sequence space representations}

\author[A. Debrouwere]{Andreas Debrouwere}
\address{Department of Mathematics and Data Science \\ Vrije Universiteit Brussel, Belgium\\ Pleinlaan 2 \\ 1050 Brussels \\ Belgium}
\email{andreas.debrouwere@vub.be}

\author[L. Neyt]{Lenny Neyt}
\address{University of Vienna\\ Faculty of Mathematics\\ Oskar-Morgenstern-Platz 1 \\ 1090 Wien\\ Austria}
\thanks{The research of L. Neyt was funded in whole by the Austrian Science Fund (FWF) 10.55776/ESP8128624. For open access purposes, the author has applied a CC BY public copyright license to any author-accepted manuscript version arising from this submission.}
\email{lenny.neyt@univie.ac.at}

\subjclass[2020]{Primary 46A13, 46A45, 46A63, 46F05, 46M18. Secondary 42C15, 81S30}
\keywords{Pe\l czy\'nski-Vogt decomposition; PLS-spaces; Gelfand-Shilov spaces; multiplier spaces; sequence space representations; Gabor frames.}

\begin{abstract}
We establish a Pe\l czy\'nski-Vogt decomposition result for $(\PLS)$-type power series spaces of infinite type. By combining this result with the theory of 
Gabor frames, we obtain sequence space representations for multiplier spaces of Gelfand-Shilov spaces of Beurling type. 
\end{abstract}

\maketitle  

\section{Introduction}
\subsection{A Pe\l czy\'nski-Vogt decomposition result for (PLS)-spaces} Given two locally convex spaces $X$ and $Y$, we write $X \cong Y$  if $X$ and $Y$ are isomorphic, and $X \compl Y$ if $X$ is isomorphic to a complemented subspace of $Y$. 

In his study of the complemented subspaces of the classical Banach sequence spaces $\ell_p$  and $c_0$, Pe\l czy\'nski showed the following decomposition result \cite[Proposition 4]{Pelczynski} (see also \cite[Theorem 2.2.4]{Albiac-Kalton-TopicsBanach}): \emph{Let $Y$ = $\ell_p$ $(1 \leq p < \infty)$ or $c_0$}.
\begin{equation}
\label{PVstat}
\mbox{\emph{ Let $X$ be a locally convex space with $X \compl Y$ and $Y \compl X$. Then, $X \cong Y$.}}
\end{equation}
The proof of this result is elementary and relies on the identities $\ell_p(\ell_p) \cong \ell_p$ and $c_0(c_0) \cong c_0$.

In the context of sequence space representations,  Vogt later obtained the following variant of Pe\l czy\'nski's result \cite[Proposition 1.2]{V-SeqSpRepTestFuncDist}:  \emph{Let $E$ be a locally convex space and set $Y = E^{\N}, E^{(\N)}$, or $s(E)$. Then, \eqref{PVstat} holds.} The proof of this result is very similar to the one of Pe\l czy\'nski, now using that $Y \varepsilon Y \cong Y$. Applying the splitting theory for Fr\'echet spaces, \cite{firstsplittingVogt} (see also  \cite{V-Ext1Frechet}), Vogt \cite[Satz 1.4]{V-IsomorphSatzPotRaum} showed the much deeper result that \eqref{PVstat} holds if $Y = \Lambda_{\infty}(\alpha)$ is a stable nuclear power series space of infinite type. Moreover, \eqref{PVstat} is also true if $Y = \Lambda_{0}(\alpha)$ is a nuclear power series space of finite type \cite[Satz 1.4]{V-IsomorphSatzPotRaum}, as follows from a theorem of Mityagin, which states that every locally convex space $X$ with $X \compl \Lambda_0(\alpha)$ is isomorphic to a power series of finite type \cite[Theorem 1.1]{MH} (see also \cite[Corollary 29.20]{M-V-IntroFunctAnal}), together with basic properties of the diametral dimension. 
 
 Results of this form are often called \emph{Pe\l czy\'nski-Vogt decomposition results}. They were used by  Valdivia and Vogt in the 1980s to establish sequence space representations of most of the function and distribution spaces arising in Schwartz’s theory of distributions \cite{V-TopLCS, V-SeqSpRepTestFuncDist}; see \cite{B-D-N-SeqSpRepWilson,D-SeqSpRepEntFunc, D-N-SeqSpRepTMIB, L-BasesGerms, L-DiamDimWeighSpGerms}  for more recent, related works.
  
 $(\PLS)$-spaces are locally convex spaces that can be written as the countable projective limit of $(\LS)$-spaces. Important examples include the space of distributions, the space of real analytic functions, and the space $\mathcal{O}_M$ of slowly increasing smooth functions.
The class of $(\PLS)$-spaces plays an important role in the modern theory of locally convex spaces, particularly in connection with the derived projective limit functor \cite{W-DerivFuncFunctAnal}. We refer to the survey article \cite{Domanski-PLS} of Doma\'nski for more information on  $(\PLS)$-spaces.
 
 In the first part of this article, we prove a Pe\l czy\'nski-Vogt decomposition result for $(\PLS)$-type power series spaces of infinite type. Let  $\Lambda_{\infty}(\alpha)$ and  $\Lambda_{\infty}(\beta)$ be nuclear and assume that $\Lambda_{\infty}(\alpha)$ is stable. Consider the $(\PLS)$-space
 \begin{align*}
 	\Lambda_\infty(\alpha, \beta) 
 	&= \Lambda_{\infty}(\alpha) \varepsilon \Lambda'_{\infty}(\beta) \\
	&= \{ c = (c_{i,j})_{(i,j) \in \N^2} \mid \forall n \in \N \, \exists N \in \N :  \| c\|_{n,N} = \sup_{(i,j) \in \N^2} |c_{i,j}|e^{n\alpha_i -N \beta_j} < \infty \}.
 \end{align*}
 In Theorem \ref{t:PelczynskiPLS}, we show that \eqref{PVstat} holds for $Y =  \Lambda_\infty(\alpha, \beta)$. To this end, we extend the technique of Vogt's proof \cite[Satz 1.4]{V-IsomorphSatzPotRaum} of the decomposition result for power series of infinite type to the setting of $(\PLS)$-spaces. 
 
 One of the main ingredients in our proof is a new splitting result for  $(\PLS)$-type sequence spaces, namely, that every topologically exact sequence of $(\PLS)$-spaces
 $$
 \SES{ \Lambda_\infty(\alpha, \beta)}{X}{ \Lambda_\infty(\gamma, \delta)}{}{} 
$$
with $\Lambda_{\infty}(\gamma)$ and $\Lambda_{\infty}(\delta)$ nuclear, splits (see Theorem \ref{t:SplittingResult} for a more general result). In his PhD thesis \cite[Theorem 5.14]{K-SplitPowerSeriesSpPLS}, Kunkle showed a similar result, but for a different type of sequence spaces. We closely follow Kunkle’s approach to show the above result. For the reader’s convenience, and since the results from \cite{K-SplitPowerSeriesSpPLS} have never been published, we include here a full proof. See \cite{Domanski-splitting} and the references therein for other works related to the splitting theory for  $(\PLS)$-spaces.

We expect that an analogous decomposition should also hold in the finite-type case, namely, that \eqref{PVstat} is true for $ Y = \Lambda_{0}(\alpha) \varepsilon \Lambda'_{0}(\beta)$, with $\Lambda_{0}(\alpha)$ and $\Lambda_{0}(\beta)$ nuclear, however, we have not yet been able been able to prove this.

\subsection{Sequence space representations for multiplier spaces of Gelfand-Shilov spaces of Beurling type} In his classical book \cite[Chapitre VII, $\S $5]{Schwartzbook}, Schwartz introduced the space 
$$
 \mathcal{O}_M = \{ f \in C^\infty(\R) \mid \forall n \in \N \, \exists N \in \N :  \| f\|_{n,N} = \max_{p \leq n}  |f^{(p)}(x)|(1+|x|)^{-N}< \infty \}.
$$
and showed that it was equal to the multiplier space of the space $\mathcal{S}$ of rapidly decreasing smooth functions, that is, 
$$
 \mathcal{O}_M = \{ f \in \mathcal{S}' \mid \varphi \cdot f \in \mathcal{S} \mbox{ for all $\varphi \in \mathcal{S}$}\}.
$$
Moreover, the natural $(\PLS)$-space topology on  $\mathcal{O}_M$ coincides with the operator topology induced by the embedding
	\[ \mathcal{O}_{M} \rightarrow L_{b}(\mathcal{S}, \mathcal{S}), \quad f \mapsto ( \varphi \mapsto \varphi \cdot f) . \]	
	
In his doctoral thesis \cite[Chapitre II, Theor\`eme 16, p.\ 131]{G-ProdTensTopEspNucl}, Grothendieck proved that the space  $\mathcal{O}_M$ is ultrabornological. He achieved this by showing that $\mathcal{O}_M \compl s(s')$ and verifying directly that  $s(s')$ is ultrabornological. Later, Valdivia \cite{V-RepOM} showed that in fact $\mathcal{O}_{M} \cong s(s')$. To this end, he showed, in addition, that  $s(s') \compl \mathcal{O}_M$  and used the Pe\l czy\'nski-Vogt decomposition result for $s(s')$. We refer to \cite{B-D-N-SeqSpRepWilson} for a constructive proof of the isomorphism $\mathcal{O}_{M} \cong s(s')$.

In  \cite{D-N-WeighPLBUltradiffFuncMultSp, D-N-BarrelWeighPLBUltradiffFunc}, we studied the structural and linear topological properties of the multiplier space of Gelfand-Shilov spaces; see \cite{D-P-V-MultConvTempUltraDist, S-InclThMoyalMultAlgGenGSSp} for related works. In the second part of this article, we apply the Pe\l czy\'nski-Vogt decomposition result for the spaces $\Lambda_{\infty}(\alpha,\beta)$  to obtain sequence space representations for multiplier spaces of Gelfand-Shilov spaces of Beurling type. We now state a sample of this result.

Fix $\mu,\tau >0$. For $h >0$ and $\lambda \in \R$ we define the Banach space
$$
\Sigma^{\mu,h}_{\tau,\lambda} = \{ \varphi \in C^\infty(\R) \mid  \| f\| =  \sup_{p \in \N} \sup_{x \in \R} \frac{|\varphi^{(p)}(x)|e^{-\frac{1}{\lambda}|x|^{1/\tau}}}{h^{p}p!^\mu} < \infty \}.
 $$
Consider the Fr\'echet space
$$
\Sigma^{\mu}_{\tau}  = \varprojlim_{h \to 0^+} \Sigma^{\mu,h}_{\tau,h}  .
$$ 
The spaces $\Sigma^\tau_\mu$ are the projective analogues of the classical Gelfand-Shilov spaces $\mathcal{S}^\mu_\tau$ \cite{G-S-GenFunc2} and have been considered in e.g.\ \cite{C-G-P-R-AnistropicShubinOpExpGSSp, D-SeqSpRepEntFunc, D-N-SeqSpRepBBSp, Petersson}. We mention that $\Sigma^\tau_\mu$ is non-trivial if and only if $\mu + \tau >1$ (cf.\ \cite[Section 8]{G-S-GenFunc2}).

Define the $(\PLS)$-space
$$
\mathcal{Z}^{(\mu)}_{(\tau)} = \varprojlim_{h \to 0^+}  \varinjlim_{\lambda \to 0^-}  \Sigma^{\mu,h}_{\tau,\lambda}.   
$$
In \cite[Theorem 5.7(i)]{D-N-WeighPLBUltradiffFuncMultSp} we showed that $\mathcal{Z}^{(\mu)}_{(\tau)}$ is the multiplier space  of $\Sigma^{\mu}_{\tau}$, i.e., 
$$
\mathcal{Z}^{(\mu)}_{(\tau)} = \{ f \in (\Sigma^\tau_\mu)' \mid \varphi \cdot f \in\Sigma^{\mu}_{\tau}  \mbox{ for all $\varphi \in\Sigma^{\mu}_{\tau} $}\},
$$
and that the natural $(\PLS)$-space topology on  $\mathcal{Z}^{(\mu)}_{(\tau)}$ coincides with the topology induced by the embedding
	\[ \mathcal{Z}^{(\mu)}_{(\tau)} \rightarrow L_{b}(\Sigma^{\mu}_{\tau} ,\Sigma^{\mu}_{\tau} ), \quad f \mapsto ( \varphi \mapsto \varphi \cdot f) . \]	
Moreover, $\mathcal{Z}^{(\mu)}_{(\tau)}$ is ultrabornological  \cite[Theorem 5.7(ii)]{D-N-WeighPLBUltradiffFuncMultSp}.

Here, we obtain a  sequence space representation for $\mathcal{Z}^{(\mu)}_{(\tau)}$: 

\begin{theorem}\label{main-intro}
Let $\mu, \tau > 1/2$. Set $\alpha_\mu = (j^{1/\mu})_{j \in \N}$ and  $\alpha_\tau = (j^{1/\tau})_{j \in \N}$. Then,
$$
\mathcal{Z}^{(\mu)}_{(\tau)} \cong \Lambda(\alpha_\mu, \alpha_\tau).
$$
\end{theorem}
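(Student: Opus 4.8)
The plan is to deduce the isomorphism from the Pe\l czy\'nski--Vogt decomposition result of Theorem \ref{t:PelczynskiPLS}, applied to $Y = \Lambda_\infty(\alpha_\mu, \alpha_\tau)$. For this I must check the hypotheses of that theorem and establish the two complemented embeddings $\mathcal{Z}^{(\mu)}_{(\tau)} \compl \Lambda_\infty(\alpha_\mu, \alpha_\tau)$ and $\Lambda_\infty(\alpha_\mu, \alpha_\tau) \compl \mathcal{Z}^{(\mu)}_{(\tau)}$; then \eqref{PVstat} yields $\mathcal{Z}^{(\mu)}_{(\tau)} \cong \Lambda_\infty(\alpha_\mu, \alpha_\tau)$. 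The hypotheses are routine: $\Lambda_\infty(\alpha_\mu)$ and $\Lambda_\infty(\alpha_\tau)$ are nuclear since $\sum_j e^{-\varepsilon j^{1/\mu}} < \infty$ and $\sum_j e^{-\varepsilon j^{1/\tau}} < \infty$ for every $\varepsilon > 0$, and $\Lambda_\infty(\alpha_\mu)$ is stable because $\alpha_{\mu, 2j}/\alpha_{\mu, j} = 2^{1/\mu}$ is bounded.

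The two embeddings I would obtain from time--frequency analysis. Fix the Gaussian window $g(x) = e^{-\pi x^2}$; it is here that the assumption $\mu, \tau > 1/2$ enters, since $g$ lies in the Beurling-type space $\Sigma^\mu_\tau$ precisely when $\mu, \tau > 1/2$ (its derivatives grow of order $p!^{1/2}$ and it decays of order $e^{-c|x|^{2}}$, both strictly dominated by the order-$\mu$ and order-$\tau$ bounds defining $\Sigma^\mu_\tau$). The key analytic input, which I would draw from the description of $\mathcal{Z}^{(\mu)}_{(\tau)}$ as a multiplier space, is a characterization of its elements through the short-time Fourier transform $V_g f$: an element $f$ of $\mathcal{Z}^{(\mu)}_{(\tau)}$ is characterized by the requirement that for every $c > 0$ there is $C > 0$ with $|V_g f(x,\xi)| \lesssim e^{C|x|^{1/\tau}} e^{-c|\xi|^{1/\mu}}$, i.e.\ decay of every order in the frequency variable and controlled growth of some order in the time variable. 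Sampling $V_g f$ on a sufficiently fine lattice then produces a coefficient sequence that is projective of order $\mu$ in the frequency index and inductive of order $\tau$ in the time index---exactly the two-index weight behaviour encoded by $\Lambda_\infty(\alpha_\mu, \alpha_\tau)$.

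Concretely, for the first embedding I would choose an oversampled Gabor frame $\{ M_{\beta l} T_{\alpha k} g \}_{k,l}$ ($\alpha\beta < 1$) whose canonical dual window $\gamma$ again lies in $\Sigma^\mu_\tau$ (via Janssen's results on Gaussian Gabor frames), and verify that the analysis map $C \colon f \mapsto (V_\gamma f(\alpha k, \beta l))_{k,l}$ and the synthesis map $D \colon (c_{k,l}) \mapsto \sum_{k,l} c_{k,l} M_{\beta l} T_{\alpha k} g$ are continuous between $\mathcal{Z}^{(\mu)}_{(\tau)}$ and $\Lambda_\infty(\alpha_\mu, \alpha_\tau)$ (with the indices arranged so that the frequency index carries the order-$\mu$ weight and the time index the order-$\tau$ weight). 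Since $D \circ C = \id$ by the frame reconstruction formula, $C$ embeds $\mathcal{Z}^{(\mu)}_{(\tau)}$ as a complemented subspace, giving $\mathcal{Z}^{(\mu)}_{(\tau)} \compl \Lambda_\infty(\alpha_\mu, \alpha_\tau)$. For the reverse embedding I would instead use an undersampled Gabor Riesz sequence $\{ M_{\beta' l} T_{\alpha' k} g \}_{k,l}$ ($\alpha'\beta' > 1$): here the synthesis map $D' \colon \Lambda_\infty(\alpha_\mu, \alpha_\tau) \to \mathcal{Z}^{(\mu)}_{(\tau)}$ is a topological embedding, and the analysis map $C'$ against the biorthogonal Riesz system satisfies $C' \circ D' = \id$, so $D'$ has complemented range and $\Lambda_\infty(\alpha_\mu, \alpha_\tau) \compl \mathcal{Z}^{(\mu)}_{(\tau)}$.

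With both embeddings in hand, Theorem \ref{t:PelczynskiPLS} applies and yields the isomorphism. I expect the main obstacle to lie entirely in the continuity estimates of the preceding paragraph: one must show that the Gabor coefficients of an element of the $(\PLS)$-space $\mathcal{Z}^{(\mu)}_{(\tau)}$ reproduce exactly the projective--inductive weight structure of $\Lambda_\infty(\alpha_\mu, \alpha_\tau)$, uniformly across the nested steps of both the projective and the inductive limit, and conversely that summing such a weighted sequence against the Gabor atoms remains inside $\mathcal{Z}^{(\mu)}_{(\tau)}$. This rests on sharp kernel estimates for the short-time Fourier transform adapted to the Beurling-type Gelfand--Shilov weights, together with control of the dual and biorthogonal windows; it is the passage through the limits, rather than any single Banach-space bound, that I expect to be the delicate point.
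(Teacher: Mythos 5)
Your proposal is correct and follows essentially the same route as the paper: Pe\l czy\'nski--Vogt (Theorem \ref{t:PelczynskiPLS}) applied to $\Lambda_\infty(\alpha_\mu,\alpha_\tau)$, with the two complemented embeddings produced by Gabor analysis/synthesis operators for a Gaussian-based dual window pair $(\psi,\gamma)$ with $\psi,\gamma\in\Sigma^\mu_\tau$ supplied by Janssen (this is where $\mu,\tau>1/2$ enters), and the continuity estimates imported as in Proposition \ref{p:GaborCont}. The only cosmetic difference is in the reverse embedding: where you invoke an undersampled Gabor Riesz sequence with its biorthogonal system (whose existence and membership in $\Sigma^\mu_\tau$ you would still have to justify), the paper obtains exactly this by applying the Wexler--Raz biorthogonality relations (Theorem \ref{l:WZBiOrthRel}) to the \emph{same} dual pair $(\psi,\gamma)$ on the adjoint lattice $\frac{1}{b}\Z\times\frac{1}{a}\Z$, yielding $(ab)^{-1}C^{1/b,1/a}_\psi\circ D^{1/b,1/a}_\gamma=\id$ with no new window needed.
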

The proof of Theorem \ref{main-intro} combines the Pe\l czy\'nski-Vogt decomposition result for the $(\PLS)$-type power series spaces $\Lambda_{\infty}(\alpha,\beta)$ of infinite type with properties of Gabor frames---a fundamental tool in time-frequency analysis \cite{G-FoundTFAnalysis}. In \cite{D-N-SeqSpRepBBSp}, we used the same technique to obtain sequence space representations for Gelfand-Shilov spaces, in particular $\Sigma^{\mu}_{\tau}$. The restriction $\mu, \tau > 1/2$ is imposed because we rely on a result on dual windows of Gabor frames from \cite{Janssen}. 
Finally, we note that in  \cite{D-N-WeighPLBUltradiffFuncMultSp, D-N-BarrelWeighPLBUltradiffFunc} we also made use of time-frequency analysis, namely, the short-time Fourier transform \cite{G-FoundTFAnalysis}, which can be seen as a continuous version of Gabor frames.

\subsection{Outline of the paper} Section \ref{sect-prelim} collects notation and preliminary notions. In Section \ref{sect-seqsp}, we define and discuss various sequence spaces. We prove a splitting result for $(\PLS)$-type sequence spaces in Section \ref{sect-splitting}. This section is based on \cite{K-SplitPowerSeriesSpPLS}. Section \ref{sect-PV} is devoted to the proof of the  Pe\l czy\'nski-Vogt decomposition result for $(\PLS)$-type power series spaces. In Section \ref{sect-appl}, we apply this result to obtain sequence space representations for multiplier spaces of Gelfand-Shilov spaces of Beurling type. In particular, we show Theorem \ref{main-intro} there.

\section{Preliminaries}\label{sect-prelim}
In this preliminary section, we fix the notation and discuss various concepts related to locally convex spaces that will be used in this article.

\subsection{Notation}
Let $X$ be a lcHs (= Hausdorff locally convex space). We denote by $X^\prime$ the dual of $X$, which, unless specified otherwise, we endow with the strong topology. 

Let $X$ and $Y$ be two lcHs. We write $X \cong Y$  if $X$ and $Y$ are topologically isomorphic, and  $X \compl Y$ if $X$ is topologically isomorphic to a complemented subspace of $Y$. 

We denote by $L(X, Y)$ the space consisting of all continuous linear mappings from $X$ into $Y$. Unless specified otherwise, we endow $L(X, Y)$ with the topology of uniform convergence on bounded subsets of $X$.

We define the $\varepsilon$-product \cite{K-Ultradistributions3, S-ThDistVV} of $X$ and $Y$ as
	\[ X \varepsilon Y = L_\varepsilon(X^\prime_c, Y), \]
where the subscripts $c$ and $\varepsilon$ indicate that we endow $X^\prime$ with the topology of uniform convergence on absolutely convex compact subsets of $X$ and $L(X^\prime_c, Y)$ with the topology of uniform convergence on equicontinuous subsets of $X'$, respectively. 
If $X$ is Montel, then $X \varepsilon Y = L(X^\prime, Y)$ as lcHs.
The spaces $X \varepsilon Y$ and $Y \varepsilon X$ are canonically isomorphic via transposition \cite[p.~657]{K-Ultradistributions3}.

\subsection{Projective spectra}
A \emph{projective spectrum} $\mathscr{X}$ is a sequence of vector spaces $(X_n)_{n \in \N}$ and linear maps $\varrho^n_m : X_m \to X_n$, $n \leq m$, satisfying
	\[ \varrho^n_n = \id_{X_n} , \qquad \varrho^n_k = \varrho^n_m \circ \varrho^m_k , \qquad \forall n \leq m \leq k . \]
We write $\mathscr{X} = (X_n, \varrho^n_m)$ and call $\varrho^n_m$ the \emph{spectral mappings}. Consider the linear map
	\[ \Psi_{\mathscr{X}}  : \prod_{n \in \N} X_n \to \prod_{n \in \N} X_{n} , \quad (x_n)_{n \in \N} \mapsto (x_n - \varrho^n_{n + 1}(x_{n + 1}))_{n \in \N} . \]
We define the \emph{projective limit} of $\mathscr{X}$ as
	\[ \Proj \mathscr{X} = \ker \Psi_{\mathscr{X}} = \left\{ (x_n)_{n \in \N} \in \prod_{n \in \N} X_n \mid \varrho^n_m(x_m) = x_n \text{ for all } n \leq m \right\}.  \]
If the spectral mappings $\varrho_m^n$ are clear from the context, we also write
$$
\Proj \mathscr{X} =  \varprojlim_{n \in \N} X_n.
$$	
For $n \in \N$ we set
	\[ \varrho^n : \Proj \mathscr{X} \to X_n , \quad (x_m)_{m \in \N} \mapsto x_n. \]
Note that $\varrho^n = \varrho^n_m \circ \varrho^m$ for all $n \leq m$.
We define the  \emph{first derived projective limit} of $\mathscr{X}$ as
	\[ \Proj^1 \mathscr{X} = \left( \prod_{n \in \N} X_n \right) / \im \Psi_{\mathscr{X}} . \]

Two projective spectra $\mathscr{X} = (X_n, \varrho^n_m)$ and $\mathscr{Y} = (Y_n, \sigma^n_m)$ are called \emph{equivalent} if there are sequences $(k_n)_{n \in \N}$ and $(l_n)_{n \in \N}$ of natural numbers with $n \leq l_n \leq k_n \leq l_{n + 1}$ and linear maps $\alpha_n : X_{k_n} \to Y_{l_n}$ and $\beta_{n} : Y_{l_{n}} \to X_{k_{n-1}}$ such that $\beta_{n} \circ \alpha_{n} = \varrho^{k_{n-1}}_{k_{n}}$ and $\alpha_n \circ \beta_{n + 1} = \sigma^{l_n}_{l_{n + 1}}$. If $\mathscr{X}$ and $\mathscr{Y}$ are equivalent spectra, then $\Proj \mathscr{X} \cong \Proj \mathscr{Y}$ and $\Proj^1 \mathscr{X} \cong \Proj^1 \mathscr{Y}$ \cite[Proposition 3.1.7]{W-DerivFuncFunctAnal}.

We shall use the following fundamental property of the derived projective limit. 
\begin{proposition}\label{Proj1fprop}
Let $\mathscr{X} = (X_n, \varrho^n_m)$, $\mathscr{Y} = (Y_n, \sigma^n_m)$, and $\mathscr{Z} = (Z_n, \tau^n_m)$ be projective spectra. Let
\[
				\begin{tikzcd}
					0 \arrow{r} & X_1 \arrow{r}{f_1} & Y_1 \arrow{r}{g_1} & Z_1 \arrow{r}  & 0  \\
										0 \arrow{r} & X_2 \arrow{r}{f_2}\arrow{u}{\varrho_2^1}  & Y_2 \arrow{r}{g_2}\arrow{u}{\sigma_2^1}  & Z_2 \arrow{r}\arrow{u}{\tau_2^1}  & 0 \\
									 & \vdots \arrow{u}  & \vdots \arrow{u}    & \vdots \arrow{u}    &  
								\end{tikzcd}
			\]	
be a commutative diagram of exact sequences of vector spaces. If  $\Proj^1 \mathscr{X} =0$, then the sequence of vector spaces
$$
\SES{\Proj \mathscr{X}}{\Proj \mathscr{Y}}{\Proj \mathscr{Z}}{f}{g} 
$$
is exact, where $f((x_n)_{n \in \N}) = (f_n(x_n))_{n \in \N}$ and $g((y_n)_{n \in \N}) = (g_n(y_n))_{n \in \N}$ for  $(x_n)_{n \in \N} \in \Proj \mathscr{X}$ and $(y_n)_{n \in \N} \in \Proj \mathscr{Y}$.
\end{proposition}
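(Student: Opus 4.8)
The plan is to realize the statement as the snake lemma applied to a commutative ladder built from the three defining maps $\Psi_{\mathscr{X}}, \Psi_{\mathscr{Y}}, \Psi_{\mathscr{Z}}$. First I would assemble the diagram whose two rows are the product sequences $0 \to \prod_{n} X_n \to \prod_{n} Y_n \to \prod_{n} Z_n \to 0$, with horizontal maps induced componentwise by the $f_n$ and $g_n$, and whose three vertical maps are $\Psi_{\mathscr{X}}$, $\Psi_{\mathscr{Y}}$, $\Psi_{\mathscr{Z}}$. Since each row $0 \to X_n \to Y_n \to Z_n \to 0$ of the hypothesized diagram is exact and products of exact sequences are exact, both rows of the ladder are exact. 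Commutativity of the squares is immediate from the compatibility relations $f_n \circ \varrho^n_{n+1} = \sigma^n_{n+1} \circ f_{n+1}$ and $g_n \circ \sigma^n_{n+1} = \tau^n_{n+1} \circ g_{n+1}$, which are precisely the commutativity of the squares in the given diagram.

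Next I would read off directly from the definitions that $\ker \Psi_{\mathscr{X}} = \Proj \mathscr{X}$ and $\operatorname{coker} \Psi_{\mathscr{X}} = \Proj^1 \mathscr{X}$, and likewise for $\mathscr{Y}$ and $\mathscr{Z}$. The snake lemma then furnishes the exact sequence
\[ 0 \to \Proj \mathscr{X} \to \Proj \mathscr{Y} \to \Proj \mathscr{Z} \to \Proj^1 \mathscr{X} \to \Proj^1 \mathscr{Y} \to \Proj^1 \mathscr{Z} \to 0, \]
in which the first two maps are exactly the $f$ and $g$ of the statement. The hypothesis $\Proj^1 \mathscr{X} = 0$ forces the connecting homomorphism $\Proj \mathscr{Z} \to \Proj^1 \mathscr{X}$ to vanish, so $g$ maps onto $\Proj \mathscr{Z}$; exactness of $0 \to \Proj \mathscr{X} \to \Proj \mathscr{Y} \to \Proj \mathscr{Z}$ is already contained in the snake sequence. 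This gives the desired short exact sequence.

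If one prefers a self-contained argument avoiding the snake lemma, I would carry out the same reasoning by hand. Injectivity of $f$ on $\Proj \mathscr{X}$ and exactness at $\Proj \mathscr{Y}$ are routine chases using only injectivity of the $f_n$ together with exactness of the rows. The one substantive point — and the step I expect to be the main obstacle — is surjectivity of $g$ on the projective limits, which is exactly where $\Proj^1 \mathscr{X} = 0$, i.e.\ surjectivity of $\Psi_{\mathscr{X}}$, is used. Concretely: given $(z_n)_{n} \in \Proj \mathscr{Z}$, I would lift each $z_n$ to some $w_n \in Y_n$ with $g_n(w_n) = z_n$; the defect $d_n = w_n - \sigma^n_{n+1}(w_{n+1})$ satisfies $g_n(d_n) = z_n - \tau^n_{n+1}(z_{n+1}) = 0$, so $d_n \in \ker g_n = \im f_n$, say $d_n = f_n(a_n)$. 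Surjectivity of $\Psi_{\mathscr{X}}$ then yields $(b_n)_{n}$ with $b_n - \varrho^n_{n+1}(b_{n+1}) = a_n$, and a short computation using the commutativity relations shows that $y_n = w_n - f_n(b_n)$ is compatible under the $\sigma^n_{n+1}$ and satisfies $g_n(y_n) = z_n$, so $(y_n)_{n} \in \Proj \mathscr{Y}$ is the required preimage.
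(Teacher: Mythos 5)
Your proof is correct, and it is the canonical argument: the paper states this proposition without proof (it is the standard six-term exact sequence $0 \to \Proj \mathscr{X} \to \Proj \mathscr{Y} \to \Proj \mathscr{Z} \to \Proj^1 \mathscr{X} \to \Proj^1 \mathscr{Y} \to \Proj^1 \mathscr{Z} \to 0$ from Wengenroth's book \cite{W-DerivFuncFunctAnal}), and that result is obtained precisely as you do, by applying the snake lemma to the ladder with exact rows $0 \to \prod_n X_n \to \prod_n Y_n \to \prod_n Z_n \to 0$ and vertical maps $\Psi_{\mathscr{X}}, \Psi_{\mathscr{Y}}, \Psi_{\mathscr{Z}}$, whose kernels and cokernels are by definition the $\Proj$ and $\Proj^1$ spaces. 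Your self-contained diagram chase for the surjectivity of $g$ is also correct and correctly isolates the only place where $\Proj^1 \mathscr{X} = 0$, i.e.\ surjectivity of $\Psi_{\mathscr{X}}$, is needed.
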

A \emph{projective spectrum of lcHs} is a projective spectrum $\mathscr{X} = (X_n, \varrho^n_m)$ consisting of lcHs $X_n$ such that all the spectral mappings $\varrho^n_m$ are continuous. In this case, we always endow $\Proj \mathscr{X}$ with the projective limit topology, that is, the coarsest topology such that all the mappings $\varrho^n : \Proj \mathscr{X} \to X_n$ are continuous.

A projective spectrum $\mathscr{X} = (X_n, \varrho^n_m)$ of lcHs is called \emph{strongly reduced} if
	\[ \forall n \in \N ~ \exists m \geq n :  \varrho^n_m(X_m) \subseteq \overline{\varrho^n(\Proj \mathscr{X})} . \]

\subsection{$(\PLS)$-spaces}
 A sequence $(X_{n})_{n \in \N}$ of lcHs with $X_n \subseteq X_{n+1}$ and continuous inclusion maps is called an \emph{inductive spectrum of lcHs}. We define the \emph{inductive limit} of  $(X_{n})_{n \in \N}$ as the set  $X =\bigcup_{n \in \N} X_n$ endowed with the finest Hausdorff  locally convex topology such that all inclusion maps $X_n \to X$ are continuous (if it exists, see \cite[Lemma 24.6]{M-V-IntroFunctAnal}).  We write $\displaystyle X = \varinjlim_{n \in \N} X_{n}$.
 
 A lcHs $X$ is called an \emph{$(\LB)$-space} if  $\displaystyle X =  \varinjlim_{n \in \N} X_{n}$ for some inductive spectrum $(X_{n})_{n \in \N}$ of Banach spaces. If, in addition the inductive spectrum $(X_{n})_{n \in \N}$ can be chosen such that  the inclusion mappings $X_n \to X_{n+1}$ are compact, we say that $X$ is  an \emph{$(\LS)$-space}. A lcHs $X$ is an $(\LS)$-space if and only if it can be written as the dual of a Fr\'echet-Schwartz space \cite[Proposition 25.20]{M-V-IntroFunctAnal}.
 
A lcHs $X$ is called a   \emph{$(\PLB)$-space} if  $X =  \Proj \mathscr{X}$ for some projective spectrum $\mathscr{X}$ of $(\LB)$-spaces. Similarly,  we say that $X$ is a \emph{$(\PLS)$-space}  if  $X =  \Proj \mathscr{X}$ for some projective spectrum $\mathscr{X}$ of $(\LB)$-spaces. Every $(\PLS)$-space can be written as $X =  \Proj \mathscr{X}$ for some strongly reduced projective spectrum $\mathscr{X}$ of $(\LS)$-spaces. If $\mathscr{X}$ and $\mathscr{X}^\prime$ are two strongly reduced projective spectra of $(\LS)$-spaces with $\Proj \mathscr{X} = \Proj \mathscr{X}^\prime$, then $\mathscr{X}$ and $\mathscr{X}^\prime$ are equivalent \cite[Proposition 3.3.8]{W-DerivFuncFunctAnal}. The class of $(\PLS)$-spaces is closed under taking closed subspaces and countable products. In particular, if $X$ and $Y$ are $(\PLS)$-spaces with $X \compl Y$, then there exists a $(\PLS)$-space $Z$ such that $Y \cong X \times Z$. 

We need the following factorization result.

\begin{lemma} \label{l:factorization}
Let $\mathscr{X} = (X_n, \varrho_m^n)$ be a strongly reduced projective spectrum of $(\LS)$-spaces, let $Y$ be an $(\LS)$-space, and let $T \in L(\Proj \mathscr{X},Y)$. Then, there exist $n \in \N$ and $\widetilde{T} \in L(X_n,Y)$ such that $T = \widetilde{T} \circ \varrho^n$.
\end{lemma}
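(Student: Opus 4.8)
The plan is to pass to strong duals and apply Grothendieck's factorization theorem. Write $X = \Proj \mathscr{X}$. Since each $X_n$ is an $(\LS)$-space, its strong dual $(X_n)'$ is a Fr\'echet--Schwartz space, and because $\mathscr{X}$ is strongly reduced, the strong dual $X'$ is topologically isomorphic to the $(\mathrm{LF})$-space $\varinjlim_{n \in \N} (X_n)'$, the canonical inclusion of the $n$-th step being the transpose $(\varrho^n)^t$ of $\varrho^n$ (cf.\ \cite{W-DerivFuncFunctAnal}). Likewise, $Y$ is reflexive (as an $(\LS)$-space it is Montel), and its strong dual $Y'$ is a Fr\'echet space.

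First I would transpose. The map $T^t \colon Y' \to X'$ is continuous. Since $Y'$ is Fr\'echet and $X' = \varinjlim_{n \in \N} (X_n)'$ is an $(\mathrm{LF})$-space, Grothendieck's factorization theorem (see, e.g., \cite{M-V-IntroFunctAnal}) yields an $n \in \N$ and a continuous linear map $S \colon Y' \to (X_n)'$ with $T^t = (\varrho^n)^t \circ S$.

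Next I would transpose back. Both $X_n$ and $Y$ are reflexive, so under the canonical identifications $X_n = (X_n)''$ and $Y = Y''$ the transpose of $S$ is a continuous linear map $\widetilde{T} := S^t \colon X_n \to Y$; moreover $\widetilde{T}^t = S$, since the bitranspose of a continuous linear map between reflexive spaces is the map itself. It remains to verify $T = \widetilde{T} \circ \varrho^n$. Taking transposes gives $(\widetilde{T} \circ \varrho^n)^t = (\varrho^n)^t \circ \widetilde{T}^t = (\varrho^n)^t \circ S = T^t$. Since $Y'$ separates the points of $Y$, a continuous linear map into $Y$ is determined by its transpose, whence $\widetilde{T} \circ \varrho^n = T$, as required.

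The main obstacle is the topological identification $X' \cong \varinjlim_{n \in \N} (X_n)'$ of the strong dual of $X$ with the inductive limit of the strong duals of the steps: this is precisely where strong reducedness of $\mathscr{X}$ enters, and it is what promotes $X'$ to an $(\mathrm{LF})$-space to which Grothendieck's theorem applies. A secondary point requiring care is checking that the inclusion of the $n$-th step of $X' = \varinjlim_{n \in \N} (X_n)'$ is genuinely the transpose $(\varrho^n)^t$, so that the factorization $T^t = (\varrho^n)^t \circ S$ transposes back to the desired identity; this follows from the compatibility of the duality with the spectral mappings.
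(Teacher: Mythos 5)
Your overall strategy (transpose, localize via Grothendieck's factorization theorem, transpose back using reflexivity of $(\LS)$-spaces) is sound, and the final transposition bookkeeping is correct; but the pivotal step is asserted rather than proved, and as stated it is false under the paper's definition of \emph{strongly reduced}. You claim that strong reducedness makes $X'_b$ topologically isomorphic to the $(\mathrm{LF})$-space $\varinjlim_n (X_n)'_b$ ``with the canonical inclusion of the $n$-th step being $(\varrho^n)^t$''. Strong reducedness only requires $\varrho^n_m(X_m) \subseteq \overline{\varrho^n(\Proj \mathscr{X})}$ for \emph{some} $m \geq n$; it does \emph{not} make $\varrho^n(\Proj \mathscr{X})$ dense in $X_n$, so $(\varrho^n)^t$ may have a nontrivial kernel and the ``inclusions'' need not exist (extreme case: all spectral maps zero gives a strongly reduced spectrum with $\Proj \mathscr{X} = \{0\}$, where every $(\varrho^n)^t$ is the zero map). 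Even after injectivity is restored, the topological identification $X'_b \cong \varinjlim_n (X_n)'_b$ is a theorem requiring proof (one can get it from semireflexivity of $(\PLS)$-spaces, which makes $X'_b$ barrelled, together with De Wilde's closed graph theorem), and your subsequent closed-graph and bitransposition steps lean entirely on this unestablished claim.

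The good news is that the argument is repairable, and in a way that shows where strong reducedness really enters. First, the topological identification is unnecessary: the Meise--Vogt form of Grothendieck's factorization theorem \cite[Theorem 24.33]{M-V-IntroFunctAnal} applies to $T^t \in L(Y'_b, X'_b)$ with an \emph{arbitrary} locally convex target, provided $X' = \bigcup_n (\varrho^n)^t((X_n)')$ holds algebraically --- and that follows from Hahn--Banach alone, with no reducedness hypothesis. Second, to convert range localization into a \emph{continuous} factorization $T^t = (\varrho^n)^t \circ S$ you need injectivity, so pass to the reduced spectrum $\widetilde{X}_n = \overline{\varrho^n(\Proj \mathscr{X})}$: each $\widetilde{X}_n$ is a closed subspace of an $(\LS)$-space, hence $(\LS)$ and reflexive, the transposed maps $(\widetilde{X}_n)' \to X'$ are injective, and the closed graph theorem between the Fr\'echet spaces $Y'$ and $(\widetilde{X}_n)'$ produces a continuous $S$; transposing back as you do yields $\widetilde{T}_0 \in L(\widetilde{X}_n, Y)$ with $T = \widetilde{T}_0 \circ \varrho^n$. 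Third --- and this is the actual role of strong reducedness, not the dual identification --- choose $m \geq n$ with $\varrho^n_m(X_m) \subseteq \widetilde{X}_n$ and set $\widetilde{T} = \widetilde{T}_0 \circ \varrho^n_m \in L(X_m, Y)$, so that $T = \widetilde{T} \circ \varrho^m$ factors through a genuine step of the original spectrum. For comparison, the paper disposes of the lemma in one line by citing \cite[Proposition 3.3.8]{W-DerivFuncFunctAnal} with the constant spectrum $Y$; your proof, once repaired as above, amounts to a self-contained direct proof of that special case by the classical duality method.
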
 
 \begin{proof}
This follows from \cite[Proposition 3.3.8]{W-DerivFuncFunctAnal} with $\mathscr{Y}$ the constant spectrum $Y$.
\end{proof}

\subsection{Short exact sequences} A sequence
	\begin{equation}
		\label{eq:PrelimSES} \SES{X}{Y}{E}{\iota}{Q} 
	\end{equation}
of lcHs is said to be \emph{algebraically exact} if $\iota$ and $Q$ are continuous linear mappings and the sequence is exact as a sequence of vector spaces. 
If, in addition, $\iota$ is a topological embedding and $Q$ is open, we call the sequence \emph{(topologically) exact}\footnote{In the sense of category theory, this means that the sequence is exact in the category of locally convex spaces.}. The sequence \emph{splits} if $Q$ admits a continuous linear right inverse, or equivalently, if $\iota$ admits a continuous linear left inverse.

Given two lcHs $X$ and $E$, we write $\Ext^1(E, X) = 0$ if every exact sequence of the form \eqref{eq:PrelimSES} splits. Let $X$ and $E$ be $(\LS)$-spaces. We write $\Ext_{\LS}^1(E, X) = 0$ if every  exact sequence of the form \eqref{eq:PrelimSES} with $Y$ an $(\LS)$-space splits. Similarly, given two $(\PLS)$-spaces $E$ and $X$, we write  $\Ext_{\PLS}^1(E, X) = 0$ to indicate that every  exact sequence of the form \eqref{eq:PrelimSES} with $Y$ a $(\PLS)$-space splits.

We now give a useful characterization of $\Ext^1(E, X) = 0$, $\Ext_{\LS}^1(E, X) = 0$,  and $\Ext_{\PLS}^1(E, X) = 0$.  Let $X$, $Y$, and $E$ be lcHs. For $T \in L(X,Y)$, we define
	\[ T_* : L(E, X) \to L(E,Y), \quad S \mapsto T \circ S . \]

\begin{lemma}\label{charEXT}  \mbox{}
\begin{enumerate}
\item  Let $X$ and $E$ be lcHs. Then, $\Ext^1(E, X) = 0$  if and only if for every  exact sequence  
\begin{equation}
\label{EXTseq}
 \SES{X}{Y}{Z}{}{Q} 
\end{equation}
 the mapping $Q^*: L(E, Y) \to L(E,Z)$ is surjective.
\item Let $X$ and $E$ be $(\LS)$-spaces. Then, $\Ext_{\LS}^1(E, X) = 0$ if and only if for every  exact sequence \eqref{EXTseq} of $(\LS)$-spaces the mapping $Q^*: L(E, Y) \to L(E,Z)$ is surjective.
\item Let $X$ and $E$ be $(\PLS)$-spaces. Then, $\Ext_{\PLS}^1(E, X) = 0$ if and only if for every  exact sequence \eqref{EXTseq} of $(\PLS)$-spaces the mapping $Q^*: L(E, Y) \to L(E,Z)$ is surjective.
\end{enumerate}
\end{lemma}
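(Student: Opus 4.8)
The plan is to prove all three parts via the same underlying mechanism, since each asserts the equivalence of a splitting property with the surjectivity of $Q^*$ on the relevant class of spaces. The key observation is that an exact sequence of the form \eqref{eq:PrelimSES} splits precisely when the identity $\id_E \in L(E,E)$ lifts along $Q$; that is, the sequence splits if and only if there is $R \in L(E,Y)$ with $Q \circ R = \id_E$, which is exactly saying $\id_E$ lies in the image of $Q^* : L(E,Y) \to L(E,E)$. This elementary reformulation is what connects the two sides, and I would make it explicit at the outset.

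First I would prove the forward direction ($\Rightarrow$) of each part. Assuming the relevant $\Ext^1$ vanishes, take any exact sequence \eqref{EXTseq} with the cokernel space $Z$ (playing the role of $E$'s target) and an arbitrary $S \in L(E,Z)$. The strategy is to pull back the sequence \eqref{EXTseq} along $S$: form the fiber product $Y \times_Z E = \{(y,e) \in Y \times E : Q(y) = S(e)\}$, which sits in an exact sequence
\[
\SES{X}{Y\times_Z E}{E}{}{\pi}
\]
where $\pi(y,e) = e$. Here I must check that this pullback sequence is again topologically exact and, crucially, lives in the correct category (lcHs, $(\LS)$, or $(\PLS)$ respectively) so that the hypothesis applies: the middle term is a closed subspace of $Y \times E$, and since each of the three classes is closed under closed subspaces and finite products, $Y \times_Z E$ belongs to the same class as $Y$ and $E$. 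By hypothesis this sequence splits, yielding a continuous linear right inverse $\sigma : E \to Y \times_Z E$ of $\pi$; composing with the projection to $Y$ gives $R \in L(E,Y)$ with $Q \circ R = S$, so $Q^*$ is surjective.

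For the converse direction ($\Leftarrow$), I would assume $Q^*$ is surjective for every exact sequence of the appropriate type and show that every exact sequence \eqref{eq:PrelimSES} splits. Given such a sequence, apply the surjectivity hypothesis directly to it with the element $\id_E \in L(E,E)$: surjectivity of $Q^* : L(E,Y) \to L(E,E)$ produces $R \in L(E,Y)$ with $Q \circ R = \id_E$, which is a continuous linear right inverse of $Q$, so the sequence splits. The main obstacle, and the only place where the three parts genuinely differ, is verifying the categorical closure properties needed in the forward direction — namely that the pullback $Y \times_Z E$ remains an $(\LS)$- or $(\PLS)$-space and that the pullback sequence remains topologically exact (openness of $\pi$ and the embedding property of $X \to Y \times_Z E$). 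For the $(\PLS)$ case I would invoke that the class is closed under closed subspaces and countable (hence finite) products, as recalled in the preliminaries, and check topological exactness by a direct diagram chase using that $Q$ is open.
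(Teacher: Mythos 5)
Your proof is correct, but it takes a more self-contained route than the paper. The paper disposes of the lemma in two lines: part (1) is quoted from Wengenroth \cite[Proposition 5.1.3]{W-DerivFuncFunctAnal}, and parts (2) and (3) are deduced from (1) together with the fact that the categories of $(\LS)$- and $(\PLS)$-spaces are quasi-abelian. Your pullback construction is precisely the standard argument underlying that citation, and your uniform treatment of all three parts replaces the appeal to quasi-abelianness by the concrete closure properties you invoke: what makes these categories quasi-abelian is exactly that pullbacks exist and behave well, which you verify by hand via closure under closed subspaces and finite products. Two small points deserve attention in your forward direction. First, the openness of $\pi$ is indeed a one-line check and goes as you indicate: for $U \subseteq Y$, $V \subseteq E$ zero-neighborhoods, $\pi\bigl((U \times V) \cap (Y \times_Z E)\bigr) \supseteq V \cap S^{-1}(Q(U))$, and $Q(U)$ is a zero-neighborhood in $Z$ since $Q$ is open. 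Second, for part (2) you use that a closed subspace of an $(\LS)$-space is again an $(\LS)$-space; this is true (closed subspaces of duals of Fr\'echet--Schwartz spaces are of the same kind), but unlike the $(\PLS)$ case it is not recorded in the paper's preliminaries, so it should be stated with a reference rather than bundled into ``each of the three classes is closed under closed subspaces.'' With those verifications made explicit, your argument is complete; what it buys over the paper's proof is independence from \cite{W-DerivFuncFunctAnal} and from the categorical machinery, at the cost of length.
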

\begin{proof}
(1) is shown \cite[Proposition 5.1.3]{W-DerivFuncFunctAnal}, while (2) and (3) follow from this result and the fact that the categories of $(\LS)$- and $(\PLS)$-spaces are quasi-abelian (see the beginning of \cite[Section 5.3]{W-DerivFuncFunctAnal}).
\end{proof}

\section{Sequence spaces}\label{sect-seqsp}
In this section, we define and discuss various types of sequence spaces.

\subsection{Fr\'echet- and $(\LB)$-type sequence spaces} By an \emph{index set} $\mathcal{I}$, we mean a non-finite countable set.
A sequence $c = (c_i)_{i \in \mathcal{I}} \in \C^{\mathcal{I}}$ of complex numbers is said to \emph{vanish at $\infty$} if for every $\varepsilon > 0$ there exists a finite subset $\mathcal{I}_\varepsilon \subseteq \mathcal{I}$ such that $|c_i| < \varepsilon$ for all $i \in \mathcal{I} \setminus \mathcal{I}_\varepsilon$.

Fix an index set $\mathcal{I}$. A sequence $w = (w_i)_{i \in \mathcal{I}}$ of positive numbers is called a \emph{weight sequence}. We associate the following Banach spaces with a weight sequence $w$:
	\[ \ell^1(w) = \{ c = (c_i)_{i \in \mathcal{I}} \in \C^{\mathcal{I}} \mid \|c\|_{1, w} = \sum_{i \in \mathcal{I}} |c_i| w_i < \infty \} \]
and
	\[ \ell^\infty(w) = \{ c = (c_i)_{i \in \mathcal{I}} \in \C^{\mathcal{I}} \mid \|c\|_{\infty, w} = \sup_{i \in \mathcal{I}} |c_i| w_i < \infty \} . \]

A \emph{weight matrix} $a = (a_n)_{n \in \N}$ is a sequence of weight sequences $a_n$ such that $a_{n, i} \leq a_{n + 1, i}$ for all  $i \in \mathcal{I}$ and $n \in \N$. For $p \in \{1, \infty\}$ and a weight matrix $a$, we define the Fr\'echet space
	\[ \lambda^p(a) = \varprojlim_{n \in \N} \ell^p(a_n). \]

A \emph{dual weight matrix} $b = (b_N)_{N \in \N}$ is a sequence of weight sequences $b_N$ such that $b_{N + 1, i} \leq b_{N, i}$ for all  $i \in \mathcal{I}$ and $N \in \N$. 
For $p \in \{1, \infty\}$ and a dual weight matrix $b$, we define the $(\LB)$-space
	\[ k^p(b) = \varinjlim_{N \in \N} \ell^p(b_N). \]

Given a  (dual) weight matrix $a$, we define  $a^\circ = (1/a_N)_{N \in \N}$. 

A weight sequence $a$ is said to satisfy condition (S) if 
$$
	\forall n \in \N ~ \exists m \geq n :  \frac{a_{n}}{a_{m}} \mbox{ vanishes at $\infty$}. 
$$
In such a case, for $p \in \{1,\infty\}$, $\lambda^p(a)$ is a Fr\'echet-Schwartz space and  $k^p(a)$  is an $(\LS)$-space. Moreover, $(\lambda^p(a))^\prime = k^q(a^\circ)$ and $(k^p(a^\circ))^\prime = \lambda^q(a)$ as lcHs, where $q$ is the conjugate index of $p$.

We now recall a standard result from the splitting theory of Fr\'echet spaces \cite{V-Ext1Frechet}. A weight matrix $a$ is said to satisfy condition $(\DN)$ if
	\[ \exists n \in \N ~ \forall m \geq n, \theta \in (0, 1) ~ \exists k \geq m, C > 0 ~ \forall i \in \mathcal{I} : a_{m, i} \leq C a_{k, i}^{\theta} a_{n, i}^{1 - \theta}, \]
while $a$ is said to satisfy $(\Omega)$ if
	\[ \forall n \in \N ~ \exists m \geq n ~ \forall k \geq m ~ \exists \theta \in (0, 1) , C > 0 ~ \forall i \in \mathcal{I} :  a_{k, i}^{\theta} a_{n, i}^{1 - \theta} \leq C a_{m, i}. \]
	\begin{remark}\label{rem:DN-Omega}
	The above conditions are closely related to the properties $(\DN)$ and $(\Omega)$ for Fr\'echet spaces \cite{M-V-IntroFunctAnal, V-Ext1Frechet}. More precisely, a weight matrix $a$ satisfies  $(\DN)$ or $(\Omega)$ if and only if the Fr\'echet space $\lambda^{p}(a)$, $p \in \{1,\infty\}$, does so.	\end{remark}

	\begin{proposition}
		\label{p:SplittingWeightMatrices}
		Let $v$ and $w$ be weight matrices satisfying  \emph{(S)} such that $v$ satisfies $(\DN)$ and $w$ satisfies $(\Omega)$. Then,
		\begin{enumerate}
		\item $\Ext^1(\lambda^1(v), \lambda^\infty(w)) = 0$. 
		\item $\ExtLS^1(k^1(w^\circ), k^\infty(v^\circ)) = 0$.
		\end{enumerate}
	\end{proposition}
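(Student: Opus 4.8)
The plan is to treat part (1) as an essentially immediate consequence of the Vogt splitting theorem, and then to derive part (2) from part (1) by strong duality.

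For part (1), I would first invoke Remark~\ref{rem:DN-Omega} to pass from the matrix conditions to the corresponding conditions on the Fr\'echet spaces themselves: since $v$ satisfies $(\DN)$, the space $\lambda^1(v)$ satisfies $(\DN)$, and since $w$ satisfies $(\Omega)$, the space $\lambda^\infty(w)$ satisfies $(\Omega)$. Condition (S) guarantees moreover that both $\lambda^1(v)$ and $\lambda^\infty(w)$ are Fr\'echet--Schwartz spaces. The statement $\Ext^1(\lambda^1(v),\lambda^\infty(w))=0$ then falls directly under the classical splitting theorem of Vogt \cite{V-Ext1Frechet}: for Fr\'echet spaces $E$ and $F$ with $E$ satisfying $(\DN)$ and $F$ satisfying $(\Omega)$, every topologically exact sequence $0 \to F \to G \to E \to 0$ of Fr\'echet spaces splits. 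Here $E = \lambda^1(v)$ plays the role of the quotient (carrying $(\DN)$) and $F = \lambda^\infty(w)$ the role of the kernel (carrying $(\Omega)$), so no further work is needed beyond the verification of the hypotheses.

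For part (2), I would argue by duality, exploiting that under (S) the strong duals are given explicitly by $(k^1(w^\circ))' = \lambda^\infty(w)$ and $(k^\infty(v^\circ))' = \lambda^1(v)$ (the duality formulas recorded after condition (S), with conjugate indices). Starting from an arbitrary topologically exact sequence $0 \to k^\infty(v^\circ) \xrightarrow{\iota} Y \xrightarrow{Q} k^1(w^\circ) \to 0$ of $(\LS)$-spaces, taking strong duals yields a sequence $0 \to \lambda^\infty(w) \xrightarrow{Q'} Y' \xrightarrow{\iota'} \lambda^1(v) \to 0$; since $Y$ is an $(\LS)$-space, $Y'$ is a Fr\'echet--Schwartz space, so this dual sequence is again topologically exact and consists of Fr\'echet--Schwartz spaces. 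By part (1), $\Ext^1(\lambda^1(v),\lambda^\infty(w))=0$, so the dual sequence splits, i.e.\ the quotient map $\iota'$ admits a continuous linear right inverse $S \colon \lambda^1(v) \to Y'$. Dualizing $S$ once more and using $Y'' = Y$ (reflexivity of the $(\LS)$-space $Y$) together with $(\lambda^1(v))' = k^\infty(v^\circ)$ (the duality formula), I obtain a continuous linear map $S' \colon Y \to k^\infty(v^\circ)$ which, because $\iota' S = \id$ dualizes to $S' \iota = \id$, is a left inverse of $\iota$. Hence the original sequence splits, which by Lemma~\ref{charEXT}(2) (or directly by definition) gives $\ExtLS^1(k^1(w^\circ), k^\infty(v^\circ)) = 0$.

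The main obstacle I anticipate is the exactness of the strong-dual functor used in part (2): one must ensure that dualizing the $(\LS)$-sequence produces a topologically exact sequence of Fr\'echet--Schwartz spaces (i.e.\ that $Q'$ is again a topological embedding and $\iota'$ an open surjection), and that the bidual identifications $Y'' = Y$ and $\iota'' = \iota$ hold with the relevant continuity of transposes. All of this rests on the reflexivity and the well-behaved duality theory of $(\LS)$- and Fr\'echet--Schwartz spaces; concretely, it can be justified through the quasi-abelian duality between these two categories, as already invoked in the excerpt (cf.\ \cite[Section~5.3]{W-DerivFuncFunctAnal}). Once this exactness is granted, both parts reduce to hypothesis-checking together with the citation of Vogt's theorem for part (1).
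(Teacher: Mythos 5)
Your proposal is correct and follows essentially the same route as the paper: part (1) via Remark~\ref{rem:DN-Omega} and Vogt's splitting theorem, and part (2) by dualizing the $(\LS)$-sequence, splitting the dual Fr\'echet--Schwartz sequence using part (1), and transposing the right inverse back via reflexivity $Y''=Y$. The only difference is bookkeeping: where you invoke the quasi-abelian duality of \cite[Section~5.3]{W-DerivFuncFunctAnal} to justify exactness of the dual sequence, the paper cites \cite[Proposition 26.24]{M-V-IntroFunctAnal} directly.
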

	
	\begin{proof}
In view of Remark \ref{rem:DN-Omega}, part (1) follows from \cite[Theorem 5.1]{V-Ext1Frechet}. For (2), let 
$$
 \SES{ k^\infty(v^\circ)}{Y}{k^1(w^\circ)}{\iota}{Q} 
$$
be an  exact sequence of $(\LS)$-spaces. By 	\cite[Proposition 26.24]{M-V-IntroFunctAnal}, its dual sequence 
$$
 \SES{ \lambda^\infty(w)}{Y'}{\lambda^1(v)}{Q^t}{\iota^t} 
$$
is also exact. Part (1) implies that this sequence splits. Let $R \in L(\lambda^1(v),Y')$ be a right inverse of $\iota^t$. Since $(\LS)$-spaces are reflexive, it holds that $Y'' = Y$. Hence, $R^t \in  L(Y,k^\infty(v^\circ))$ is a left inverse of $\iota$.
	\end{proof}
	
By an \emph{exponent sequence $\alpha = (\alpha_i)_{i \in \N}$}, we mean a non-decreasing sequence of positive numbers such that $\lim_{i \to \infty} \alpha_i = \infty$. Given an exponent sequence $\alpha$,  we define the weight matrices $v^\infty_\alpha = ((e^{n \alpha_i})_{i \in \N})_{n \in \N}$ and  $v^0_\alpha = ((e^{-\alpha_i/n})_{i \in \N})_{n \in \N}$. Note that $v^\infty_\alpha$ satisfies (S), $(\DN)$, and $(\Omega)$, while  $v^0_\alpha$ satisfies (S) and $(\Omega)$, but not $(\DN)$. For $p \in \{1, \infty\}$, we define 
\[ \Lambda^p_\infty(\alpha) = \lambda^p(v^\infty_\alpha), \qquad  \Lambda^p_0(\alpha) = \lambda^p(v^0_\alpha) . \]
We call $\Lambda^p_\infty(\alpha)$ and $\Lambda^p_0(\alpha)$ power series spaces of infinite and finite type, respectively.

An exponent sequence $\alpha$ is called \emph{stable} if
	\[ \sup_{i \in \N} \frac{\alpha_{2i}}{\alpha_i} < \infty. \]
The sequence $\alpha$ is stable if and only if  $\Lambda^p_r(\alpha) \cong  \Lambda^p_r(\alpha) \times  \Lambda^p_r(\alpha)$ for $p \in \{1,\infty\}$ and $r \in \{0,\infty\}$. We say that $\alpha$ satisfies condition (N) if
	\[ \sup_{i \in \N} \frac{\log (1 + i)}{\alpha_i} < \infty . \]
Then, $\alpha$ satisfies (N) if and only if the Fr\'echet space $\Lambda^p_\infty(\alpha)$, $p \in \{1,\infty\}$, is nuclear \cite[Proposition 29.6(1)]{M-V-IntroFunctAnal}. In such a case, $\Lambda^1_\infty(\alpha) = \Lambda^\infty_\infty(\alpha)$ and we simply denote this space by $\Lambda_\infty(\alpha)$.

\subsection{$(\PLB)$-type sequence spaces} 
Let $\mathcal{I}$ be an index set.
A \emph{weight matrix system} $\mathcal{A} = (a_n)_{n \in \N}$ is a sequence of dual weight matrices $a_n = (a_{n, N})_{N \in \N}$ such that $a_{n, N + 1, i} \leq a_{n, N, i} \leq a_{n + 1, N, i}$ for all $i \in \mathcal{I}$ and $n, N \in \N$.
For $p \in \{1, \infty\}$ and a weight matrix system $\mathcal{A}$, we define the $(\PLB)$-space
	\[ \lambda^p(\mathcal{A}) = \varprojlim_{n \in \N} k^p(a_n) . \]
Given two weight matrices $v$ and $w$ on index sets $\mathcal{I}$ and $\mathcal{J}$, respectively, we define the following weight matrix system on $\mathcal{I} \times \mathcal{J}$:
	\[ \mathcal{A}_{v, w} = (((v_{n, i} / w_{N, j})_{(i, j) \in \mathcal{I} \times \mathcal{J}})_{N \in \N})_{n \in \N} . \]
We write $\lambda^p(v, w) = \lambda^p(\mathcal{A}_{v, w})$ for $p \in \{1, \infty\}$.
In the following lemma, we study the basic linear topological structure of the spaces $\lambda^p(v,w)$, $p \in \{1, \infty\}$.	
	\begin{lemma}
		\label{l:EpsilonProdWeights}
Let $v$ and $w$ be weight matrices  on the index sets $\mathcal{I}$ and $\mathcal{J}$, respectively, satisfying  \emph{(S)} and let $p \in \{1, \infty\}$. Then, there exist dual weight matrices $a_n$, $n \in \N$, on  $\mathcal{I} \times \mathcal{J}$  such that 
\begin{itemize}
\item $a_n^\circ$ satisfies  \emph{(S)} for each $n \in \N$.
\item  $k^p(a_{n+1}) \subseteq k^p(a_n)$ continuously for all $n \in \N$ and the projective spectrum $(k^p(a_n))$ (with inclusion as spectral mappings) is strongly reduced.
\item  If $w$ satisfies $(\DN)$ or $(\Omega)$, then each $a^\circ_n$ does so as well.
\item  $\displaystyle \lambda^p(v,w) =  \varprojlim_{n \in \N} k^p(a_n)$ as lcHs. 
\end{itemize}
 In particular, $ \lambda^p(v,w)$ is a $(\PLS)$-space.
 \end{lemma}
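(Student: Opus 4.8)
The plan is to replace the naive presentation of $\lambda^p(v,w)$ by an explicit, strongly reduced spectrum of $(\LS)$-spaces with the same projective limit. Writing $b_{n,N,(i,j)} = v_{n,i}/w_{N,j}$, we have by definition $\lambda^p(v,w) = \varprojlim_n k^p(b_n)$ with $k^p(b_n) = \varinjlim_N \ell^p(b_{n,N})$ and the inclusions as spectral maps. The trouble is that the $k^p(b_n)$ are merely $(\LB)$-spaces: the linking weight satisfies $b_{n,N}/b_{n,N'} = w_N/w_{N'}$, which is independent of $i$ and hence does not vanish at $\infty$ on $\mathcal I\times\mathcal J$, so the inductive steps fail to be compact. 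The entire task is to inject compactness in the $i$-direction --- which condition (S) for $v$ makes available --- without enlarging the projective limit.

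First, invoking (S) for $v$ and for $w$ and passing to cofinal subsequences of the projective and inductive indices (which changes neither $\lambda^p(v,w)$ nor its topology), I may assume that $v_{n,i}/v_{n+1,i}$ vanishes at $\infty$ in $i$ and $w_{N,j}/w_{N+1,j}$ vanishes at $\infty$ in $j$, for all $n,N$. Fix a strictly increasing sequence $s_N\uparrow 1$ in $(0,1)$ and define the dual weight matrices
\[ a_{n,N,(i,j)} = \frac{v_{n+1,i}^{\,1-s_N}\,v_{n,i}^{\,s_N}}{w_{N,j}}. \]
Each $a_n$ is decreasing in $N$, since $a_{n,N+1}/a_{n,N} = (v_{n,i}/v_{n+1,i})^{s_{N+1}-s_N}(w_{N,j}/w_{N+1,j})\le 1$, so it is a genuine dual weight matrix. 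As $v_n\le v_{n+1}$, the interpolated numerator obeys $v_n\le v_{n+1}^{1-s_N}v_n^{s_N}\le v_{n+1}$, whence $b_{n,N}\le a_{n,N}\le b_{n+1,N}$ and therefore
\[ k^p(b_{n+1})\subseteq k^p(a_n)\subseteq k^p(b_n) \]
with continuous inclusions. This interleaving identifies $(k^p(a_n))$ and $(k^p(b_n))$ as equivalent spectra (in the sense recalled in Section \ref{sect-prelim}), so that $\varprojlim_n k^p(a_n) = \lambda^p(v,w)$ as lcHs; the same chain yields $a_{n,N}\le a_{n+1,N}$, hence the required continuous inclusions $k^p(a_{n+1})\subseteq k^p(a_n)$.

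The crux is that each $k^p(a_n)$ is now an $(\LS)$-space, i.e. that $a_n^\circ$ satisfies (S); here the correction pays off. For $N'>N$,
\[ \frac{a_{n,N'}}{a_{n,N}} = \Big(\frac{v_{n,i}}{v_{n+1,i}}\Big)^{s_{N'}-s_N}\frac{w_{N,j}}{w_{N',j}}, \]
a product of a factor vanishing at $\infty$ in $i$ (because $s_{N'}>s_N$ and $v_n/v_{n+1}$ vanishes in $i$) and one vanishing at $\infty$ in $j$; such a product vanishes at $\infty$ on $\mathcal I\times\mathcal J$, which is precisely (S) for $a_n^\circ$. The conceptual point is that the interpolation exponent $s_N$ produces genuine $i$-decay in the ratios while the numerator never escapes the window $[v_n,v_{n+1}]$, so no additional sequences are admitted into the limit --- this is exactly what secures compactness and simultaneously keeps $\varprojlim_n k^p(a_n)=\lambda^p(v,w)$. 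Finally, since $a_n^\circ = w_N\cdot(v_{n+1}^{1-s_N}v_n^{s_N})^{-1}$ and the $v$-factor depends on $N$ only through the tame, log-affine parameter $s_N$ (remaining between $v_{n+1}^{-1}$ and $v_n^{-1}$), the interpolation inequalities defining $(\DN)$ and $(\Omega)$ for $a_n^\circ$ reduce by a direct estimate to the corresponding inequalities for $w$; I expect this transfer to be the most computational step. Strong reducedness of $(k^p(a_n))$ then follows from the interleaving together with the density of finitely supported sequences in each step, and the four bullets --- in particular the realization of $\lambda^p(v,w)$ as a projective limit of $(\LS)$-spaces --- are complete. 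The main obstacle throughout is the $(\LS)$-property: one must create $i$-directional compactness out of (S) for $v$ alone, and the delicate part is doing so without enlarging the limit; the factor $v_{n+1}^{1-s_N}v_n^{s_N}$ is exactly the device resolving this tension.
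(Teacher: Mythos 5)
Your overall strategy is the paper's own: replace the naive steps $k^p(b_n)$, $b_{n,N,(i,j)}=v_{n,i}/w_{N,j}$, by steps whose weights interpolate between $v_n$ and $v_{n+1}$, so that the linking ratios gain decay in the $i$-direction while the interleaving $k^p(b_{n+1})\subseteq k^p(a_n)\subseteq k^p(b_n)$ keeps the projective limit equal to $\lambda^p(v,w)$. Your verifications of the interleaving, of the continuous inclusions $k^p(a_{n+1})\subseteq k^p(a_n)$, and of (S) for $a_n^\circ$ are correct (one small caveat: for $p=\infty$ the finitely supported sequences are \emph{not} dense in the steps $\ell^\infty(a_{n,N})$; strong reducedness instead follows from (S) for $a_n^\circ$, which makes the truncations of an element of $k^\infty(a_{n+1})$ converge in $k^\infty(a_n)$). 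However, the one bullet you explicitly postpone --- the transfer of $(\DN)$ from $w$ to $a_n^\circ$ --- genuinely \emph{fails} for your weights, and this bullet cannot be dropped: it is exactly what Lemma \ref{l:SeqSpLocallySplitting} extracts from the present lemma via Proposition \ref{p:SplittingWeightMatrices}(2).

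Concretely, write $L_i=\log(v_{n+1,i}/v_{n,i})$, so that $1/a_{n,N,(i,j)}=(w_{N,j}/v_{n+1,i})\,e^{s_N L_i}$ and, by your normalization of $v$, $L_i\to\infty$. Since the $v_{n+1,i}$-factor cancels in the interpolation inequality, $(\DN)$ for $a_n^\circ$ requires: there is $N_0$ such that for \emph{all} $M\geq N_0$ and \emph{all} $\theta\in(0,1)$ there are $K$, $C$ with
\[
w_{M,j}\,e^{s_M L_i}\leq C\, w_{K,j}^{\theta}\, w_{N_0,j}^{1-\theta}\, e^{(\theta s_K+(1-\theta)s_{N_0})L_i},\qquad \forall (i,j).
\]
Because $s_K<1$ for every $K$, the exponent deficit satisfies $s_M-\theta s_K-(1-\theta)s_{N_0}\geq (s_M-s_{N_0})-\theta(1-s_{N_0})>0$ whenever $\theta<(s_M-s_{N_0})/(1-s_{N_0})$, and such $\theta$ exist for every $M>N_0$ since $(s_N)$ is strictly increasing; fixing $j$ and letting $i\to\infty$ then defeats every choice of $K$ and $C$. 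The obstruction is structural: $(\DN)$ quantifies over all $\theta$, so one needs $s_M\leq\theta s_K+(1-\theta)s_{N_0}$ with $K$ at one's disposal, which forces the family of slopes multiplying $L_i$ to be \emph{unbounded} in $N$ --- your $s_N\uparrow 1$ saturates. (Your $(\Omega)$ transfer does survive, because there $\theta$ may be chosen small and the condition is downward closed in $\theta$; this asymmetry is easy to overlook.) The paper's construction is designed around precisely this point: it dampens the interpolation by setting $\alpha_{n,i}=\sqrt{\max\{1,\log(v_{n+1,i}/v_{n,i})\}}$ and using the factor $e^{-N\alpha_{n,i}}$ instead of $(v_{n,i}/v_{n+1,i})^{s_N}$. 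The exponent $N\alpha_{n,i}$ is affine in $N$ with unbounded coefficient, so given $M,\theta$ one simply takes $K\geq (M-(1-\theta)N_0)/\theta$ (enlarging $K$ further to accommodate $(\DN)$ for $w$), while the square root guarantees $N\alpha_{n,i}\leq N^2+L_i$, i.e.\ $v_{n+1,i}e^{-N\alpha_{n,i}}\geq e^{-N^2}v_{n,i}$, so for each fixed $N$ the weights stay (up to a constant) in the window $[v_{n,i},v_{n+1,i}]$ and every one of your other verifications carries over verbatim. Substituting this exponent for your $s_N L_i$ repairs the proof; no bounded choice of $(s_N)$ can.
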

 \begin{proof}
 Since $v$ satisfies (S), we may assume without loss of generality that $v_n/v_{n+1}$ vanishes at $\infty$ for all $n \in \N$. 
Define
			\[ \alpha_{n, i} = \sqrt{\max\{1, \log ( v_{n +1, i} / v_{n, i})\}} , \qquad i \in \mathcal{I}, \, n \in \N . \] 
Then, for each $n \in \N$, $\alpha_n = (\alpha_{n,i})_{i \in \mathcal{I}}$ is a sequence of positive numbers tending to $\infty$ (meaning that $(1/\alpha_{n,i})_{i \in \mathcal{I}}$ vanishes at $\infty$) such that for all $N \in \N$ there is a $c >0$ with
$$
c v_{n,i} \leq v_{n+1,i}e^{-N\alpha_{n,i}}\leq v_{n+1,i}, \qquad \forall i \in \mathcal{I}. 
$$
Define 
$$
a_{n, N, (i, j)} = \frac{v_{n, i}}{w_{N, j}} e^{-N \alpha_{n, i}} , \qquad (i, j) \in \mathcal{I} \times \mathcal{J} , n,N \in \N. 
$$
Then one can readily verify that the dual weight matrices $a_n = (a_{n,N})_{N \in \N}$, $n \in \N$, verify all requirements.
	\end{proof}

We will need the following $\varepsilon$-product representation of the space $\lambda^\infty(v,w)$. 
\begin{lemma}\label{lemma:tensor}
Let $v$ and $w$ be weight matrices satisfying \emph{(S)}. Then,
$$
\lambda^\infty(v,w) =  \lambda^\infty(v) \varepsilon  k^\infty(w^\circ)
$$
as lcHs.
\end{lemma}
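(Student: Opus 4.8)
The plan is to realize both sides as the same space of matrices and then to compare topologies.

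Since $v$ satisfies (S), the space $\lambda^\infty(v)$ is Fr\'echet--Schwartz, hence Montel, and by the stated duality $(\lambda^\infty(v))^\prime = k^1(v^\circ)$. Consequently, using that $X \varepsilon Y = L(X^\prime,Y)$ for $X$ Montel, I would start from the identification
\[
	\lambda^\infty(v) \varepsilon k^\infty(w^\circ) = L_b(k^1(v^\circ), k^\infty(w^\circ))
\]
as lcHs, so that it suffices to identify this operator space, topologically, with $\lambda^\infty(v,w) = \varprojlim_{n} k^\infty(a_n)$, where $a_{n,N,(i,j)} = v_{n,i}/w_{N,j}$.

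For the algebraic identification I would associate to $T \in L(k^1(v^\circ), k^\infty(w^\circ))$ the matrix $c = (c_{i,j})$ determined by $T(e_i) = (c_{i,j})_{j}$, the $e_i$ being the unit vectors, whose span is dense in $k^1(v^\circ)$. As $k^1(v^\circ)$ carries the inductive limit topology, $T$ is continuous if and only if each restriction $T \colon \ell^1(v_n^\circ) \to k^\infty(w^\circ)$ is continuous; since the domain is a Banach space and $k^\infty(w^\circ)$ is a regular $(\LS)$-space, $T$ sends the unit ball into a bounded set and hence factors boundedly through some step $\ell^\infty(w_N^\circ)$. Because the unit ball of $\ell^1(v_n^\circ)$ is the closed absolutely convex hull of the vectors $v_{n,i}e_i$, the corresponding operator norm equals $\sup_{i,j} |c_{i,j}| v_{n,i}/w_{N,j}$. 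This shows that $T \mapsto c$ is a linear bijection onto precisely the matrices for which, for every $n$, there is $N$ with $\sup_{i,j}|c_{i,j}|v_{n,i}/w_{N,j} < \infty$, that is, onto $\lambda^\infty(v,w)$.

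It then remains to match the topologies, and this is where I expect the main difficulty to lie. On the operator side the topology is that of uniform convergence on bounded subsets of $k^1(v^\circ)$; by regularity of the $(\LS)$-space $k^1(v^\circ)$ the unit balls $B_n$ of the steps $\ell^1(v_n^\circ)$ form a fundamental system of bounded sets, so the topology is generated by the seminorms
\[
	T \longmapsto \sup_{x \in B_n} q(Tx) = \sup_{i} q\bigl( v_{n,i}(c_{i,j})_{j} \bigr),
\]
where $n \in \N$ and $q$ ranges over the continuous seminorms of $k^\infty(w^\circ)$. I would conclude by showing that this family generates the projective limit topology of $\varprojlim_n k^\infty(a_n)$; concretely, for each fixed $n$ one identifies $k^\infty(a_n)$ with the space of bounded $\mathcal{I}$-families $(v_{n,i}c_{i,\cdot})_i$ valued in the $(\LS)$-space $k^\infty(w^\circ)$, equipped with the seminorms $\sup_i q(\cdot)$. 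The obstacle is exactly this topological comparison: one must control the continuous seminorms of the $(\LS)$-space $k^\infty(w^\circ)$ and verify that uniform convergence on bounded sets on the operator side reproduces the projective limit of the $(\LS)$-topologies. Here the Schwartz property (S), together with the strong reducedness of the spectra furnished by Lemma \ref{l:EpsilonProdWeights} (and the factorization it rests on), is what makes the argument go through; by comparison the algebraic step above is routine.
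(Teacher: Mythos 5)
Your reduction and the algebraic step are sound: since $\lambda^\infty(v)$ is Fr\'echet--Schwartz, hence Montel, with dual $k^1(v^\circ)$, the identification $\lambda^\infty(v)\varepsilon k^\infty(w^\circ) = L_b(k^1(v^\circ),k^\infty(w^\circ))$ is legitimate, the matrix bijection onto $\lambda^\infty(v,w)$ is correct (regularity of the $(\LS)$-space $k^\infty(w^\circ)$ plus the extreme-point description of the $\ell^1$-balls), and the formula $\sup_{x\in B_n}q(Tx)=\sup_i q(v_{n,i}c_{i,\cdot})$ holds; the easy inclusion also works, since for $c$ in a Banach step one gets $\sup_i q(v_{n,i}c_{i,\cdot})\leq C\|c\|_{\infty,a_{n,N}}$, so the operator seminorms are continuous on the projective limit. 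But your argument stops exactly where the content of the lemma lies: the converse domination --- that every continuous seminorm of $\varprojlim_n k^\infty(a_n)$ is bounded by some $T\mapsto\sup_{x\in B_n}q(Tx)$ --- is only announced, and the mechanism you invoke is off target. Strong reducedness and the factorization of Lemma \ref{l:factorization} serve to factor maps defined \emph{on} a projective limit through its steps; here nothing needs factoring, and what is actually required is control of the continuous seminorms \emph{on} the steps $k^\infty(a_n)$. Note also that for the plain matrices $\mathcal{A}_{v,w}$ these steps are merely $(\LB)$-spaces, not $(\LS)$-spaces: the ratios $a_{n,N+1}/a_{n,N}=w_{N}/w_{N+1}$ depend only on $j$ and so do not vanish at infinity on $\mathcal{I}\times\mathcal{J}$. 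One must therefore first replace them by the modified $(\LS)$-steps of Lemma \ref{l:EpsilonProdWeights}; you cite that lemma, but never use what it provides.

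The gap can be closed along your route, but it takes a genuine argument. With the modified matrices, $k^\infty(a_n)$ is the strong dual of the Fr\'echet--Schwartz space $\lambda^1(a_n^\circ)$, so every continuous seminorm on it is dominated by the polar seminorm of a bounded set $B\subseteq\lambda^1(a_n^\circ)$, with bounds $M_N=\sup_{d\in B}\sum_{i,j}|d_{i,j}|w_{N,j}e^{N\alpha_{n,i}}/v_{n,i}$. Putting $D=\{u\in\lambda^1(w)\mid \|u\|_{1,w_N}\leq 2^N M_N \mbox{ for all } N\}$, a computation with the gauge $\|\cdot\|_D$ gives $\sum_i\|d_{i,\cdot}\|_D/v_{n,i}\leq\sum_N (2^NM_N)^{-1}\sum_i\|d_{i,\cdot}\|_{1,w_N}/v_{n,i}\leq 1$ for all $d\in B$, whence
\[
\sup_{d\in B}|\langle c,d\rangle|\leq \sup_i\sup_{u\in D}|\langle v_{n,i}c_{i,\cdot},u\rangle|=\sup_{x\in B_n}q_D(Tx),
\]
with $q_D$ the polar seminorm of $D$, continuous on $k^\infty(w^\circ)=(\lambda^1(w))'_b$. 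This row-wise decomposition of bounded sets in the predual is the missing idea, and it is what the Schwartz property actually buys you (via the $(\LS)$-step replacement). The paper avoids all of this by a different route: (S) gives $\lambda^\infty(v)$ the weak approximation property, so $\lambda^\infty(v)\varepsilon k^\infty(w^\circ)=\lambda^\infty(v)\widehat{\otimes}_\varepsilon k^\infty(w^\circ)$ by \cite[Proposition 1.4]{K-Ultradistributions3}, and the identification with $\lambda^\infty(v,w)$ --- including precisely the topological comparison you leave open --- is quoted from \cite[Proposition 10]{W-WeighPLBSpContFuncTensProd}.
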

\begin{proof}
Since $\lambda^\infty(v)$ satisfies the weak approximation property (as it satisfies (S)), it holds that $\lambda^\infty(v) \varepsilon  k^\infty(w^\circ) = \lambda^\infty(v)  \widehat{\otimes}_\varepsilon  k^\infty(w^\circ)$ \cite[Proposition 1.4]{K-Ultradistributions3}, where the latter space denotes the completion of the tensor product  $\lambda^\infty(v)  \otimes  k^\infty(w^\circ)$ with respect to the topology inherited from $\lambda^\infty(v) \varepsilon  k^\infty(w^\circ)$.  Hence, the result follows from \cite[Proposition 10]{W-WeighPLBSpContFuncTensProd}.
	\end{proof}

Given two exponent sequences $\alpha$ and $\beta$, we define for $p \in \{1,\infty\}$ and $r \in \{0,\infty\}$ the following $(\PLS)$-type power series spaces
	\begin{equation}
	\label{PSPLSdef} \Lambda^p_r(\alpha, \beta) = \lambda^p(v^r_\alpha, v^r_\beta). 
\end{equation}
If $\alpha$ and $\beta$ satisfy (N), then $\Lambda^\infty_\infty(\alpha, \beta) =  \Lambda^1_\infty(\alpha, \beta)$ and we simply denote this space by $\Lambda_\infty(\alpha, \beta)$.
	\begin{remark}
	\label{PSPLS}
	We note that the notation \eqref{PSPLSdef} is not standard. Let $\alpha = (\alpha_i)_{i \in \N}$ and $\beta = (\beta_i)_{i \in \N}$  be sequences of non-negative numbers with $\alpha_i + \beta_i \to \infty$. Consider the space
	$$
	\widetilde{\Lambda}^\infty_\infty(\alpha, \beta) = \varprojlim_{n \in \N}  \varinjlim_{N \in \N} X_{n,N},
	$$
	where 
	$$
	X_{n,N} = \{ c=(c_i)_{i \in \N} \mid \| c\|_{n,N}  = \sup_{i \in \N} |c_i| e^{n \alpha_i - N \beta_i} < \infty\}.
	$$
 The spaces $\widetilde{\Lambda}^1_\infty(\alpha, \beta), \widetilde{\Lambda}^\infty_0(\alpha, \beta)$, and  $\widetilde{\Lambda}^1_0(\alpha, \beta)$ are defined similarly. The space  $\widetilde{\Lambda}^p_r(\alpha, \beta)$, $p \in \{1,\infty\}$ and $r \in \{0,\infty\}$,  is commonly denoted by  $\Lambda^p_r(\alpha, \beta)$, see e.g. \cite{B-D-ParamDepSolDiffEqSpDistSplittingSES, Vogt-Topics}.	
	\end{remark}
	
\section{A splitting result for $(\PLS)$-type sequence spaces}\label{sect-splitting}

In this section, we show the following splitting result for $(\PLS)$-type sequence spaces; it generalizes Proposition \ref{p:SplittingWeightMatrices}.

	\begin{theorem}
		\label{t:SplittingResult}
		Let $v, w, x, y$ be weight matrices satisfying \emph{(S)}.  If $v, y$ satisfy $(\DN)$ and $w, x$ satisfy $(\Omega)$, then 
		$$
		\ExtPLS^1(\lambda^1(v, w), \lambda^\infty(x, y)) = 0.
		$$
	\end{theorem}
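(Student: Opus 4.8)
The plan is to verify the homological criterion of Lemma \ref{charEXT}(3). Writing $E = \lambda^1(v,w)$ and $X = \lambda^\infty(x,y)$, it suffices to show that for every topologically exact sequence of $(\PLS)$-spaces
\[
\SES{X}{W}{Z}{\iota}{Q}
\]
the map $Q_* : L(E,W) \to L(E,Z)$, $S \mapsto Q\circ S$, is surjective. I first fix convenient $(\LS)$-representations of the two distinguished spaces. By Lemma \ref{l:EpsilonProdWeights} applied to the pair $(v,w)$ we obtain $E = \varprojlim_k E_k$ for a strongly reduced spectrum $\mathscr E = (E_k)_k$ of $(\LS)$-spaces $E_k = k^1(b_k)$ with each $b_k^\circ$ satisfying (S) and, since $w$ satisfies $(\Omega)$, also $(\Omega)$. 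Applying the same lemma to $(x,y)$ gives $X = \varprojlim_m X_m$ for a strongly reduced spectrum $\mathscr X = (X_m)_m$ of $(\LS)$-spaces $X_m = k^\infty(a_m)$ with each $a_m^\circ$ satisfying (S) and, since $y$ satisfies $(\DN)$, also $(\DN)$. Since the category of $(\PLS)$-spaces is quasi-abelian and strongly reduced $(\LS)$-spectra of a fixed space are unique up to equivalence, I may then choose strongly reduced spectra $\mathscr W = (W_m)_m$ and $\mathscr Z = (Z_m)_m$ realizing $W$ and $Z$ which fit, together with $\mathscr X$, into a commutative ladder of $(\LS)$-exact rows $\SES{X_m}{W_m}{Z_m}{}{}$.

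The key input at each fixed level $m$ is the $(\LS)$-splitting. For every $k$, the $(\LS)$-spaces $E_k = k^1(b_k)$ and $X_m = k^\infty(a_m)$ meet the hypotheses of Proposition \ref{p:SplittingWeightMatrices}(2): taking there the weight matrices $b_k^\circ$ (which satisfies (S) and $(\Omega)$) and $a_m^\circ$ (which satisfies (S) and $(\DN)$), we obtain $\ExtLS^1(E_k, X_m) = 0$. Combined with the factorization Lemma \ref{l:factorization}, this shows that $L(E, W_m) \to L(E, Z_m)$ is surjective: any $T \in L(E, Z_m)$ factors as $T = \widehat T \circ \varrho^k$ through some $E_k$, the lift of $\widehat T$ to $L(E_k, W_m)$ exists by the $(\LS)$-splitting and Lemma \ref{charEXT}(2), and precomposition with $\varrho^k$ produces a lift in $L(E, W_m)$. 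Applying the functor $L(E, -)$ to the ladder therefore yields a commutative diagram of exact sequences of vector spaces
\[
\SES{L(E, X_m)}{L(E, W_m)}{L(E, Z_m)}{}{}
\]
whose vertical maps are postcomposition with the spectral maps of $\mathscr X$, $\mathscr W$, and $\mathscr Z$.

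I would now pass to the projective limit in $m$ via Proposition \ref{Proj1fprop}. By the universal property of the projective limit, $\Proj_m L(E, W_m) = L(E, W)$ and $\Proj_m L(E, Z_m) = L(E, Z)$, and the induced map is exactly $Q_*$. Hence the proposition yields surjectivity of $Q_*$ as soon as $\Proj^1 (L(E, X_m))_m = 0$. This last vanishing is the main obstacle, and it is here that the two remaining hypotheses are consumed: $v$ satisfying $(\DN)$ is encoded in the projective spectrum $\mathscr E = (E_k)_k$ (the source side), and $x$ satisfying $(\Omega)$ is encoded in $\mathscr X = (X_m)_m$ (the kernel side)—precisely the pairing that drives Vogt-type Fréchet splitting, now lifted to a spectrum of operator spaces. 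I expect this step to carry the bulk of the work.

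To establish $\Proj^1 (L(E, X_m))_m = 0$, I would identify the operator spaces concretely: by Lemma \ref{l:factorization}, $L(E, X_m) = \varinjlim_k L(k^1(b_k), k^\infty(a_m))$, a K\"othe-type space of matrices of prescribed growth, so that $(L(E, X_m))_m$ is a projective spectrum of $(\PLB)$-type sequence spaces. The vanishing of its $\Proj^1$ is then a Mittag-Leffler/reducedness statement, and I would prove it by the same interpolation mechanism underlying Proposition \ref{p:SplittingWeightMatrices}(1): the $(\DN)$-inequality for $v$ and the $(\Omega)$-inequality for $x$ combine to absorb the losses introduced by the connecting maps, showing that every compatible family is hit—equivalently, that the spectrum $(L(E, X_m))_m$ is reduced. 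Carrying out these uniform estimates, together with the index bookkeeping relating the factorization levels $k$ to the spectral index $m$, is the technical heart of the argument; the remaining steps are formal.
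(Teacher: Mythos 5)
Your overall architecture coincides with the paper's: reduce via Lemma \ref{charEXT}(3) to the surjectivity of $Q_*$, realize $E=\lambda^1(v,w)$ and $X=\lambda^\infty(x,y)$ through Lemma \ref{l:EpsilonProdWeights}, obtain the local splitting $\ExtLS^1(k^1(b_k),k^\infty(a_m))=0$ from Proposition \ref{p:SplittingWeightMatrices}(2) applied to $b_k^\circ$ and $a_m^\circ$ (exactly the paper's Lemma \ref{l:SeqSpLocallySplitting}), lift the exact sequence to a commutative ladder of $(\LS)$-exact rows, deduce level-wise surjectivity of $L(E,W_m)\to L(E,Z_m)$ via the factorization Lemma \ref{l:factorization}, and pass to the limit with Proposition \ref{Proj1fprop}. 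This first half reproduces the paper's Proposition \ref{p:Proj^1=0=>Ext^1=0} and is complete and correct.

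The genuine gap is that you never prove $\Proj^1\bigl(L(E,X_m)\bigr)_m=0$; you reduce the theorem to it and then only gesture at ``the same interpolation mechanism'' as Proposition \ref{p:SplittingWeightMatrices}(1). Two concrete problems. First, framing the vanishing as ``a Mittag-Leffler/reducedness statement'' mislocates the difficulty: the steps $L(E,X_m)$ are non-metrizable spaces of $(\PLB)$-type, and for such spectra reducedness does not yield $\Proj^1=0$. The paper instead invokes the completing-sequence criterion of Frerick--Kunkle--Wengenroth (Proposition \ref{p:Proj1=0Webbed}) and must verify the quantitative inclusion $(\varrho^n_m)_*(A^m_\beta)\subseteq(\varrho^n_k)_*(A^k_\gamma)+A^n_{\alpha\mid n}$ for the bounded factorization sets $A_\alpha$ indexed by $\mathfrak{C}$, which in turn requires the matrix representation of Lemma \ref{l:CharBoundedSp}, a decomposition of the matrix of $T$ along the region $\mathcal{Q}$ where one bound dominates, and the extension of the finite multi-indices $\gamma,\alpha\in\mathfrak{C}^n$ to $\mathfrak{C}^\infty$ (Lemma \ref{l:CharOfDecomposition}); none of this is formal. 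Second, your hypothesis bookkeeping for this step is off: you assign to it only $(\DN)$ for $v$ and $(\Omega)$ for $x$, but the index-extension step consumes $(\DN)$ for $y$ a second time (it is a standing assumption of Lemma \ref{l:CharOfDecomposition}, beyond its use in the local splitting), so the estimates you propose to ``carry out'' lack one of their essential inputs as stated. The final scalar verification \eqref{eq:SuffCondProj^1=0} does combine the $(\DN)$-inequalities for $v$ and $y$ with the $(\Omega)$-inequality for $x$ in roughly the interpolation you anticipate, but the passage from those inequalities to the vanishing of $\Proj^1$ is the substance of Kunkle's argument and is absent from your proposal.
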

	In the next section, we will use the following particular case of Theorem \ref{t:SplittingResult}.	
	\begin{corollary}
		\label{c:SplittingResultPS}
		Let $\alpha,\beta,\gamma,\delta$ be exponent sequences.  Then, 
		$$
		\ExtPLS^1(\Lambda^1_\infty(\alpha, \beta),\Lambda^\infty_\infty(\gamma, \delta)) = 0.
		$$
	\end{corollary}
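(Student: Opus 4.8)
The plan is to derive Corollary \ref{c:SplittingResultPS} directly from Theorem \ref{t:SplittingResult} by exhibiting the four relevant weight matrices and verifying that they satisfy the required hypotheses. Concretely, I would apply Theorem \ref{t:SplittingResult} with the choices $v = v^\infty_\alpha$, $w = v^\infty_\beta$, $x = v^\infty_\gamma$, and $y = v^\infty_\delta$. With these substitutions the left-hand slot $\lambda^1(v,w) = \lambda^1(v^\infty_\alpha, v^\infty_\beta)$ is exactly $\Lambda^1_\infty(\alpha,\beta)$ by the definition \eqref{PSPLSdef}, and similarly $\lambda^\infty(x,y) = \lambda^\infty(v^\infty_\gamma, v^\infty_\delta) = \Lambda^\infty_\infty(\gamma,\delta)$. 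Thus the conclusion $\ExtPLS^1(\Lambda^1_\infty(\alpha,\beta),\Lambda^\infty_\infty(\gamma,\delta)) = 0$ is precisely the statement of Theorem \ref{t:SplittingResult} under these choices, provided the hypotheses are met.

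The key step is therefore to check the hypotheses of Theorem \ref{t:SplittingResult}, and here the excerpt has already done essentially all the work. Recall the explicit remark made just before the definition of the power series spaces: the weight matrix $v^\infty_\alpha = ((e^{n\alpha_i})_i)_n$ associated with any exponent sequence $\alpha$ satisfies (S), $(\DN)$, and $(\Omega)$. Since $\alpha,\beta,\gamma,\delta$ are arbitrary exponent sequences, each of the four matrices $v^\infty_\alpha, v^\infty_\beta, v^\infty_\gamma, v^\infty_\delta$ enjoys all three of these properties simultaneously. In particular all four satisfy (S), which is the blanket hypothesis of the theorem; the matrices $v = v^\infty_\alpha$ and $y = v^\infty_\delta$ satisfy $(\DN)$, as required; and the matrices $w = v^\infty_\beta$ and $x = v^\infty_\gamma$ satisfy $(\Omega)$, again as required. (The fact that $v^\infty_\alpha$ and $v^\infty_\delta$ also happen to satisfy $(\Omega)$, and that $v^\infty_\beta$, $v^\infty_\gamma$ also satisfy $(\DN)$, is simply extra information that is harmless and unused here.)

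There is essentially no obstacle in this argument, since it is a pure specialization: the only thing one must confirm is that the three abstract conditions (S), $(\DN)$, $(\Omega)$ hold for the concrete infinite-type matrices $v^\infty_\alpha$, a fact I would simply cite from the statement preceding the definition of $\Lambda^p_\infty(\alpha)$. If one wished to make the citation self-contained, the verification of (S) amounts to noting that $v^\infty_{\alpha,n,i}/v^\infty_{\alpha,m,i} = e^{(n-m)\alpha_i} \to 0$ as $i \to \infty$ for $m > n$ (using $\alpha_i \to \infty$), while $(\DN)$ and $(\Omega)$ follow by taking logarithms and using the affine dependence $\log v^\infty_{\alpha,n,i} = n\alpha_i$, which makes the required interpolation inequalities exact equalities with $C = 1$. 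Either way, the corollary is immediate once the hypotheses are matched, so the substance of the result lies entirely in Theorem \ref{t:SplittingResult} itself, whose proof occupies the remainder of the section.
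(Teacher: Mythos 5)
Your proposal is correct and takes exactly the route the paper intends: Corollary \ref{c:SplittingResultPS} is stated there as the particular case of Theorem \ref{t:SplittingResult} obtained by setting $v = v^\infty_\alpha$, $w = v^\infty_\beta$, $x = v^\infty_\gamma$, $y = v^\infty_\delta$, whose hypotheses hold because, as noted in Section \ref{sect-seqsp}, every infinite-type matrix $v^\infty_\alpha$ satisfies (S), $(\DN)$, and $(\Omega)$. Your supplementary verification of these conditions via $\log v^\infty_{\alpha,n,i} = n\alpha_i$ is also accurate, so nothing is missing.
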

	
The remainder of this section is devoted to the proof of Theorem \ref{t:SplittingResult}. In his PhD thesis \cite[Theorem 5.14]{K-SplitPowerSeriesSpPLS}, Kunkle showed a result analogous to Corollary \ref{c:SplittingResultPS}, namely,
\begin{equation}
\label{resultKun}
\ExtPLS^1(\widetilde{\Lambda}^1_\infty(\alpha, \beta), \widetilde{\Lambda}^\infty_\infty(\gamma, \delta)) = 0;
\end{equation}
see Remark \ref{PSPLS} for the definition of the spaces $\widetilde{\Lambda}^1(\alpha, \beta)$ and  $\widetilde{\Lambda}^\infty(\gamma, \delta)$. We closely follow Kunkle’s approach to show Theorem  \ref{t:SplittingResult}, and we emphasize that all key ideas in this section are due to him; we have only made some technical modifications in the final part of the argument.

As in the case of Fr\'echet spaces \cite{V-Ext1Frechet, W-DerivFuncFunctAnal}, the proof of Theorem \ref{t:SplittingResult} consists of three steps. First, we show that, under suitable conditions on two $(\PLS)$-spaces $E$ and $X$, $\ExtPLS^1(E,X) =0$ follows from the vanishing of the derived projective limit of an appropriate projective spectrum associated with $E$ and $X$. Second, we establish a sufficient condition for the latter by applying a result from \cite{F-K-W-ProjLimFuncSpectraWebbed} (see also \cite{K-SplitPowerSeriesSpPLS}). Finally, we verify that the assumptions of the previous two steps are satisfied for the spaces appearing in Theorem~\ref{t:SplittingResult},  using that they are sequence spaces.

\subsection{Locally splitting spectra}
Let $E$ be a lcHs and let $\mathscr{X} = (X_{n}, \varrho^{n}_{m})$ be a projective spectrum of $(\LS)$-spaces. We denote by $\mathscr{L}(E, \mathscr{X})$ the projective spectrum $(L(E, X_n), (\varrho^n_m)_*)$.
Note that
	\[ L(E, \Proj \mathscr{X}) \to \Proj \mathscr{L}(E, \mathscr{X}) , \quad T \mapsto ((\varrho^n)_*(T))  \]
is an isomorphism of vector spaces.
If $\mathscr{X}$ and $\mathscr{X}^\prime$ are equivalent projective spectra of $(\LS)$-spaces, then the spectra $\mathscr{L}(E, \mathscr{X})$ and $\mathscr{L}(E, \mathscr{X}^\prime)$ are also equivalent.

Let $X = \Proj \mathscr{X}$ be a $(\PLS)$-space, with $\mathscr{X}$  a strongly reduced projective spectrum of $(\LS)$-spaces.
We write $\Proj^1 L(E, X) = 0$ to indicate that $\Proj^1 \mathscr{L}(E, \mathscr{X}) = 0$.
Since all strongly reduced projective spectra of $(\LS)$-spaces defining $X$ are equivalent, this definition does not depend on the choice of the strongly reduced spectrum of $(\LS)$-spaces defining $X$.

		Let $\mathscr{E} = (E_{n}, \sigma^{n}_{m})$ and $\mathscr{X} = (X_{n}, \varrho^{n}_{m})$ be two projective spectra of $(\LS)$-spaces. We call the pair $(\mathscr{E}, \mathscr{X})$  \emph{locally splitting} if for all $n,m \in \N$
			\[ \Ext^{1}_{\LS}(E_{m}, X_{n}) = 0. \]
		A pair of $(\PLS)$-spaces $(E, X)$ is called \emph{locally splitting} if there exist strongly reduced  spectra $\mathscr{E}$ and $\mathscr{X}$  of $(\LS)$-spaces such that $E = \Proj \mathscr{E}$, $X = \Proj \mathscr{X}$, and $(\mathscr{E}, \mathscr{X})$ is locally splitting.
	
	\begin{proposition}
		\label{p:Proj^1=0=>Ext^1=0}
		Let $E$ and $X$ be $(\PLS)$-spaces such that $(E, X)$ is locally splitting. If $\Proj^{1} L(E, X) = 0$, then $\ExtPLS^1(E,X) =0$.
		 	\end{proposition}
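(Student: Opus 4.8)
The plan is to reduce to the cohomological characterization and then transport an exact sequence through the functor $L(E,-)$ spectrum-by-spectrum. By Lemma~\ref{charEXT}(3), it suffices to fix an arbitrary topologically exact sequence of $(\PLS)$-spaces $0 \to X \to Y \xrightarrow{Q} Z \to 0$ and to prove that $Q_* : L(E,Y) \to L(E,Z)$, $S \mapsto Q \circ S$, is surjective. First I would fix strongly reduced spectra $\mathscr{E} = (E_m, \sigma^n_m)$ and $\mathscr{X} = (X_n, \varrho^n_m)$ of $(\LS)$-spaces witnessing that $(E,X)$ is locally splitting, so that $\ExtLS^1(E_m, X_n) = 0$ for all $n,m$; since $\Proj^1 L(E,X)$ does not depend on the chosen strongly reduced spectrum, the hypothesis reads $\Proj^1 \mathscr{L}(E,\mathscr{X}) = 0$ for this $\mathscr{X}$.

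The key structural step is to realize the given sequence as the projective limit of a short exact sequence of $(\LS)$-spectra. Concretely, I would produce spectra $\mathscr{Y} = (Y_n)$ and $\mathscr{Z} = (Z_n)$ of $(\LS)$-spaces and topologically exact sequences of $(\LS)$-spaces $0 \to X_n \xrightarrow{\iota_n} Y_n \xrightarrow{Q_n} Z_n \to 0$, forming a commutative ladder compatible with the spectral maps, such that taking $\Proj$ recovers the original sequence with $\Proj \mathscr{Y} = Y$ and $\Proj \mathscr{Z} = Z$ and with the prescribed $\mathscr{X}$ as kernel spectrum. Here one may have to replace $\mathscr{X}$ by a subsequence, which is an equivalent spectrum and hence preserves both the local splitting condition and the vanishing $\Proj^1 \mathscr{L}(E,\mathscr{X}) = 0$.

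Next I would apply the covariant functor $L(E,-)$ to each row. For fixed $n$, the sequence $0 \to L(E,X_n) \xrightarrow{(\iota_n)_*} L(E,Y_n) \xrightarrow{(Q_n)_*} L(E,Z_n) \to 0$ is exact as a sequence of vector spaces. Injectivity of $(\iota_n)_*$ and exactness at $L(E,Y_n)$ are immediate from $\iota_n$ being a topological embedding with image $\ker Q_n$: any $S$ with $Q_n \circ S = 0$ has range in $\iota_n(X_n)$, so $\iota_n^{-1} \circ S \in L(E,X_n)$ and $S = (\iota_n)_*(\iota_n^{-1}\circ S)$. Surjectivity of $(Q_n)_*$ is exactly where local splitting enters: given $T \in L(E,Z_n)$, the factorization Lemma~\ref{l:factorization}, applied to the strongly reduced spectrum $\mathscr{E}$ and the $(\LS)$-space $Z_n$, yields $m \in \N$ and $\widetilde{T} \in L(E_m, Z_n)$ with $T = \widetilde{T} \circ \sigma^m$; since $\ExtLS^1(E_m, X_n) = 0$, Lemma~\ref{charEXT}(2) produces $\widetilde{S} \in L(E_m,Y_n)$ with $Q_n \circ \widetilde{S} = \widetilde{T}$, and then $S = \widetilde{S} \circ \sigma^m \in L(E,Y_n)$ satisfies $Q_n \circ S = T$. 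These rows assemble into a commutative ladder of exact sequences of the spectra $\mathscr{L}(E,\mathscr{X})$, $\mathscr{L}(E,\mathscr{Y})$, $\mathscr{L}(E,\mathscr{Z})$. Finally, since $\Proj^1 \mathscr{L}(E,\mathscr{X}) = 0$, Proposition~\ref{Proj1fprop} gives exactness of $0 \to \Proj\mathscr{L}(E,\mathscr{X}) \to \Proj\mathscr{L}(E,\mathscr{Y}) \to \Proj\mathscr{L}(E,\mathscr{Z}) \to 0$; under the canonical isomorphisms $\Proj\mathscr{L}(E,\mathscr{Y}) \cong L(E,Y)$ and $\Proj\mathscr{L}(E,\mathscr{Z}) \cong L(E,Z)$ the rightmost map is $Q_*$, which is therefore surjective.

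The step I expect to be the main obstacle is the structural realization in the second paragraph: representing the given topologically exact sequence of $(\PLS)$-spaces as the projective limit of a short exact sequence of $(\LS)$-spectra whose kernel spectrum is (equivalent to) the prescribed $\mathscr{X}$, while keeping the local splitting property $\ExtLS^1(E_m,X_n)=0$ intact. This is where one must invoke the interplay between strongly reduced spectra, equivalence of spectra, and the quasi-abelian structure of the category of $(\PLS)$-spaces; once it is in place, the remainder is a formal diagram chase combining Lemmas~\ref{charEXT} and~\ref{l:factorization} with Proposition~\ref{Proj1fprop}.
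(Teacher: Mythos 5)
Your proposal is correct and follows essentially the same route as the paper: reduction via Lemma~\ref{charEXT}(3), realization of the exact sequence as a projective limit of a ladder of exact sequences of $(\LS)$-spaces, row-wise surjectivity of $(Q_n)_*$ via Lemma~\ref{l:factorization} together with $\ExtLS^1(E_m,X_{k_n})=0$ and Lemma~\ref{charEXT}(2), and the conclusion from Proposition~\ref{Proj1fprop}. The structural step you flag as the main obstacle is exactly what the paper resolves by citing \cite[Proposition 5.3.2]{W-DerivFuncFunctAnal}, which yields the commutative ladder with the kernel spectrum $(X_{k_n})$ a subsequence of $\mathscr{X}$, just as you anticipated.
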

	
	\begin{proof}
	By Lemma \ref{charEXT}(3), it suffices to show that for every  exact sequence 
		$$
			 \SES{X}{Y}{Z}{\iota}{Q}, 
		$$
of $(\PLS)$-spaces, the mapping $Q_*: L(E, Y) \to L(E, Z)$ is surjective. Take  two strongly reduced projective spectra $\mathscr{E} = (E_n, \sigma^n_m)$ and $\mathscr{X} = (X_n, \varrho^n_m)$ of $(\LS)$-spaces with $E = \Proj \mathscr{E}$, $X = \Proj \mathscr{X}$, and such that $(\mathscr{E}, \mathscr{X})$ is locally splitting. By \cite[Proposition 5.3.2]{W-DerivFuncFunctAnal}, there exist strongly reduced projective spectra $\mathscr{Y} = (Y_n, \kappa^n_m)$ and $\mathscr{Z} = (Z_n, \tau^n_m)$ of $(\LS)$-spaces with $Y = \Proj \mathscr{Y}$ and $Z = \Proj \mathscr{Z}$, a strictly increasing sequence $(k_n)_{n \in \N}$ of natural  numbers, and a commutative diagram of  exact sequences 
			\begin{equation}
			\label{intermexact}
				\begin{tikzcd}
					0 \arrow{r} & X_{k_1} \arrow{r}{\iota_1} & Y_1 \arrow{r}{Q_1} & Z_1 \arrow{r}  & 0  \\
										0 \arrow{r} & X_{k_2} \arrow{r}{\iota_2}\arrow{u}{\varrho_{k_2}^{k_1}}  & Y_2 \arrow{r}{Q_2}\arrow{u}{\kappa_2^1}  & Z_2 \arrow{r}\arrow{u}{\tau_2^1}  & 0 \\
									 & \vdots \arrow{u}  & \vdots \arrow{u}    & \vdots \arrow{u}    &  
								\end{tikzcd}
			 \end{equation}
		such that $\kappa^n \circ \iota = \iota_n \circ \varrho^{k_n}$ and $\tau^n \circ Q = Q_n \circ \kappa^n$ for all $n \in \N$.  We claim that for each $n \in \N$, the sequence of vector spaces
					\begin{equation}
					\label{intermexact}
					 \SES{L(E, X_{k_n})}{L(E, Y_{n})}{L(E, Z_{n})}{(\iota_n)_*}{(Q_n)_{*}} 
					 \end{equation}
					is exact. Before we prove the claim, let us show how it implies the result.
					From \eqref{intermexact}, we would obtain the  following commutative diagram of exact sequences of vector spaces
			\[
				\begin{tikzcd}
					0 \arrow{r} & L(E,X_{k_1}) \arrow{r}{(\iota_1)_*} &L(E,Y_1) \arrow{r}{(Q_1)_*} & L(E,Z_1) \arrow{r} &0  \\
										0 \arrow{r} & L(E,X_{k_2}) \arrow{r}{(\iota_2)_*}\arrow{u}{(\varrho_{k_2}^{k_1})_*}  & L(E,Y_2) \arrow{r}{(Q_2)_*}\arrow{u}{(\kappa_2^1)_*}  & L(E,Z_2) \arrow{r}\arrow{u}{(\tau_2^1)_*} &0  \\
									 & \vdots \arrow{u}  & \vdots \arrow{u}    & \vdots \arrow{u}    &  
								\end{tikzcd}
			 \]
Define $\widetilde{\mathscr{X}} = (X_{k_n}, \varrho^{k_n}_{k_m})$. Since
		$$
		\Proj^{1} \mathscr{L}(E, \widetilde{\mathscr{X}}) = \Proj^{1} \mathscr{L}(E, \mathscr{X}) =  0,
		$$
		the result follows from Proposition \ref{Proj1fprop}. We now show that the sequence  \eqref{intermexact}  is exact. It is clear that $(\iota_n)_*$ is injective and that $\im (\iota_n)_* = \ker (Q_n)_*$. Let us prove that the mapping $(Q_n)_*$ is surjective. Take $S \in L(E, Z_{n})$ arbitrary. By Lemma \ref{l:factorization}, there are $m \in \N$ and $\widetilde{S} \in L(E_m, Z_n)$ such that $\widetilde{S} \circ \sigma^m = S$. Since $\Ext^{1}_{\LS}(E_{m}, X_{k_n}) = 0$ and the sequence 
$$
					 \SES{X_{k_n}}{Y_{n}}{Z_{n}}{(\iota_n)_*}{(Q_n)_{*}} 
$$
 is  exact, Lemma \ref{charEXT}(2) implies that the mapping $(Q_n)_*: L(E_m, Y_n) \to L(E_m, Z_n)$ is surjective. Hence, there is $\widetilde{T} \in L(E_m, Y_n)$ with $(Q_n)_*(\widetilde{T}) = \widetilde{S}$. Set $T = \widetilde{T} \circ \sigma^m \in L(E, Y_n)$. Then, $(Q_n)_*(T) = S$.	
	\end{proof}
	
\subsection{A sufficient condition for the vanishing of $\Proj^{1} L(E, X)$}

Let $X$ be a lcHs. A non-increasing sequence $(A_r)_{r \in \N}$ of subsets in $X$ is called \emph{completing} if there exists a sequence $(\lambda_r)_{r \in \N}$ of positive numbers such that for every sequence $(x_r)_{r \in \N} \in \prod_{r \in \N} A_r$ the series $\sum_{r \in \N} \lambda_n x_r$ converges in $X$.

	\begin{proposition}[{\cite[Proposition 3.3]{F-K-W-ProjLimFuncSpectraWebbed}}]
		\label{p:Proj1=0Webbed}
		Let $\mathscr{X} = (X_{n}, \varrho^{m}_{n})$ be a projective spectrum of lcHs such that the spaces $X_n$ have completing sequences $(A^n_{r})_{r \in \N}$ with $\varrho^{n}_{n + 1} A^{n + 1}_r \subseteq A^{n}_r$ for all $n, r \in \N$.
		If
			\[
				\forall n \in \N ~ \exists m \geq n ~ \forall k \geq m: \varrho^{n}_{m} X_m \subseteq \varrho^{n}_{k} X_k + A^n_{n},
			\]
		then $\Proj^{1} \mathscr{X} = 0$.
	\end{proposition}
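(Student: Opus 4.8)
The plan is to prove the statement directly, by unwinding the definition of the derived projective limit: $\Proj^1 \mathscr{X} = 0$ means exactly that the map $\Psi_{\mathscr{X}} \colon \prod_{n} X_n \to \prod_n X_n$, $(x_n) \mapsto (x_n - \varrho^n_{n+1} x_{n+1})$, is surjective, i.e. that the system $x_n = y_n + \varrho^n_{n+1} x_{n+1}$ ($n \in \N$) is solvable for every prescribed $(y_n) \in \prod_n X_n$. The naive candidate $x_n = \sum_{k \geq n} \varrho^n_k y_k$ fails because this series need not converge, so the whole point is to replace $(y_n)$, modulo $\im \Psi_{\mathscr{X}}$, by an equivalent sequence for which the telescoping series does converge.

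First I would record the elementary but crucial fact that the compatibility $\varrho^n_{n+1} A^{n+1}_r \subseteq A^n_r$ iterates to $\varrho^n_m A^m_r \subseteq A^n_r$ for all $m \geq n$ and $r \in \N$. Together with the defining property of the completing sequences $(A^n_r)_r$, this yields the convergence criterion I want to exploit: if $(w_k)_k$ satisfies $w_k \in X_k$ and $w_k \in A^k_{\phi(k)}$ for some strictly increasing $\phi(k) \to \infty$, then $\varrho^n_k w_k \in A^n_{\phi(k)}$, so that $\sum_{k \geq n} \varrho^n_k w_k$ converges in $X_n$ for every $n$ by the completing property of $(A^n_r)_r$.

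The reduction then reads as follows. I would look for a correction $(u_m)_m \in \prod_m X_m$ such that the modified defects
\[
y'_m := y_m - u_m + \varrho^m_{m+1} u_{m+1}
\]
satisfy $y'_m \in A^m_{\phi(m)}$ for some strictly increasing $\phi(m) \to \infty$. Granting this, $(y_m)$ and $(y'_m)$ differ precisely by $\Psi_{\mathscr{X}}((u_m))$; by the criterion above the series $x_n := \sum_{k \geq n} \varrho^n_k y'_k$ converges, and (using continuity of the spectral maps to pass $\varrho^n_{n+1}$ through the sum) telescoping gives $x_n - \varrho^n_{n+1} x_{n+1} = y'_n$. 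Consequently $\Psi_{\mathscr{X}}((x_m + u_m)) = (y'_m) + \Psi_{\mathscr{X}}((u_m)) = (y_m)$, which is exactly what surjectivity of $\Psi_{\mathscr{X}}$ requires.

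Constructing the correction $(u_m)$ is the crux, and this is where the quantitative Mittag--Leffler hypothesis enters: the inclusion $\varrho^n_m X_m \subseteq \varrho^n_k X_k + A^n_n$ lets one rewrite any image coming from level $m$ as an image coming from an arbitrarily deep level $k$ plus an error lying in $A^n_n$. I would run an induction that at each stage uses this to trade the accumulated contribution $\sum \varrho^{\,\cdot}_k y_k$ for a term pushed further into the spectrum—absorbed into the next $u$—at the cost of an error that, transported to the relevant level via $\varrho^n_m A^m_r \subseteq A^n_r$, lands in $A^m_{\phi(m)}$ with $\phi(m) \to \infty$. The main obstacle is the bookkeeping of this diagonal procedure: the hypothesis only supplies errors in the \emph{fixed} set $A^n_n$ at level $n$, so to manufacture the \emph{shrinking} sets $A^m_{\phi(m)}$ needed for summability one must iterate the condition along a carefully chosen, strictly increasing chain of indices and interleave the choices (and the normalizing scalars of the completing sequences) so that the resulting $u_m$ are mutually compatible and the errors are simultaneously summable in every $X_n$. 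Once this chain is in place, the verification that $(u_m)$ has the required property—and hence that $\Proj^1 \mathscr{X} = 0$—is routine.
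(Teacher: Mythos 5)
Your overall strategy is the right one---it is the standard Mittag--Leffler/telescoping argument behind \cite[Proposition 3.3]{F-K-W-ProjLimFuncSpectraWebbed} (note the paper itself offers no proof, only this citation)---but two steps do not survive scrutiny as written. First, your ``convergence criterion'' in the second paragraph is false as stated: from $w_k \in A^k_{\phi(k)}$, hence $\varrho^n_k w_k \in A^n_{\phi(k)}$, one cannot conclude that $\sum_{k \geq n} \varrho^n_k w_k$ converges. The completing property only yields convergence of the \emph{weighted} series $\sum_{r} \lambda^n_r a_r$ with $a_r \in A^n_r$, and the weights $\lambda^n_r$ cannot be dropped: take all $X_n$ equal to one Banach space with identity spectral maps and $A^n_r$ the closed unit ball for every $r$; this is a non-increasing completing sequence (with $\lambda_r = 2^{-r}$), yet $w_k \in A^k_{\phi(k)}$ says only $\| w_k \| \leq 1$, and $\sum_k w_k$ diverges in general. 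So the entire reduction must be run with sets of the form $\lambda^n_r A^n_r$, and since the weights depend on the level $n$ while each correction is a single element transported to \emph{all} levels below it, this is not a cosmetic repair; your closing parenthetical about ``normalizing scalars'' defers exactly this difficulty rather than resolving it.

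Second, and decisively, the construction of the corrections $(u_m)$---which you yourself call ``the crux'' and ``the main obstacle''---is the actual content of the proposition, and you assert it rather than perform it. The idea that unblocks it, absent from your sketch, is that the hypothesis self-improves by scaling: because $\varrho^n_m X_m$ and $\varrho^n_k X_k$ are \emph{linear subspaces}, the inclusion $\varrho^n_m X_m \subseteq \varrho^n_k X_k + A^n_n$ implies $\varrho^n_m X_m \subseteq \varrho^n_k X_k + t A^n_n$ for every $t > 0$. This is what permits trading the fixed error set $A^n_n$ for errors small enough to match the completing weights. You correctly identify the other mechanism (applying the hypothesis at deeper and deeper levels $n_0$ and transporting via $\varrho^n_{n_0} A^{n_0}_{n_0} \subseteq A^n_{n_0}$, so that the index of the error set grows), but without the scaling remark---or some substitute---the plan stalls precisely where you announce the ``bookkeeping,'' since the hypothesis by itself only ever delivers errors in the unscaled set $A^n_n$ while summability at every fixed level requires elements of $\lambda^n_r A^n_r$ for infinitely many $r$. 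As it stands, the proposal is a correct outline of the known argument with its essential quantitative step, and the one false intermediate claim, left unrepaired.
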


Let $r \in \N \cup \{\infty\}$. We define
			\[ \mathfrak{C}^{r} = \{ \alpha = (\alpha_1, \alpha_2, \alpha_3) \mid \alpha_1 \in \N , \alpha_2 : \{ 0, \ldots, r \} \to \N , \alpha_3 : \{ 0, \ldots, r \} \to [1, \infty) \}. \]
		For $r \leq s \leq \infty$ and $\alpha \in  \mathfrak{C}^{s}$ we denote
		$$
		\alpha_{\mid r} = (\alpha_1, (\alpha_2)_{\mid \{0, \ldots, r\}}, (\alpha_3)_{\mid \{0, \ldots, r\}}) \in   \mathfrak{C}^{r}.
		$$
		We set
			\[ \mathfrak{C} = \bigsqcup_{r \in \N \cup \{\infty\}} \mathfrak{C}^r. \]
		For $\alpha \in \mathfrak{C}$ we write $\ell(\alpha) = r$ if $\alpha \in \mathfrak{C}^r$.	
	\begin{proposition}
		\label{p:Proj1L(X, Y)=0}
Let $\mathscr{X} = (X_{n}, \varrho^{m}_{n})$  and $\mathscr{E} = (E_n, \sigma^n_m)$ be  projective spectra of $(\LS)$-spaces such that  $\mathscr{E}$ is strongly reduced. 
Let  $\displaystyle X_n = \varinjlim_{N \in \N} X_{n,N}$ and $\displaystyle E_n = \varinjlim_{N \in \N} E_{n,N}$, with $(X_{n,N})_{N \in \N}$ and $(E_{n,N})_{N \in \N}$ inductive spectra of Banach spaces.
Set $E = \Proj  \mathscr{E}$. For $n \in \N$ and  $\alpha \in \mathfrak{C}$, we define $A^n_\alpha$ as the set consisting of all $T \in L(E, X_n)$ for which there exists $\widetilde{T} \in L(E_{\alpha_1},X_n)$ with $\widetilde{T}  \circ \varrho^{\alpha_1}  = T$ such that
$$
 \sup_{\|x\|_{E_{\alpha_1, M}} \leq 1} \|\widetilde{T}(x)\|_{X_{n,\alpha_2(M)}} \leq \alpha_3(M), \qquad \forall  M \leq \ell(\alpha).
$$
If
			\begin{equation}
			\label{condmg}
				\begin{gathered}
			\exists \alpha \in \mathfrak{C}^{\infty} ~ \forall n \in \N ~ \exists m \in \N ~ \forall k \in \N, \beta \in \mathfrak{C}^n ~ \exists \gamma \in \mathfrak{C}^n:\\
								(\varrho^n_m)_{*}(A^m_\beta) \subseteq (\varrho^n_k)_{*}(A^k_\gamma) + A^n_{\alpha\mid n},
				\end{gathered}
			\end{equation}
					then $\Proj^1\mathscr{L}(E, \mathscr{X}) =0$.
	\end{proposition}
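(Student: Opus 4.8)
The goal is to verify the hypotheses of Proposition \ref{p:Proj1=0Webbed} for the spectrum $\mathscr{L}(E, \mathscr{X}) = (L(E, X_n), (\varrho^n_m)_*)$. This requires two ingredients: (i) exhibiting completing sequences $(A^n_r)_{r \in \N}$ in each $L(E, X_n)$ that are compatible with the spectral mappings, and (ii) verifying the key covering condition. The natural candidates for the completing sequences are the sets $A^n_\alpha$ as $\alpha$ ranges over a suitable exhaustion of $\mathfrak{C}$; but since Proposition \ref{p:Proj1=0Webbed} expects a sequence indexed by $r \in \N$, the first task is to organize the index set $\mathfrak{C}$ into a non-increasing sequence. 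The plan is to choose the single $\alpha \in \mathfrak{C}^\infty$ furnished by \eqref{condmg} and set $A^n_r := A^n_{\alpha \mid r}$ for $r \in \N$; note that $\alpha \mid r$ is a genuine restriction, so the sets $A^n_{\alpha\mid r}$ form a \emph{non-increasing} sequence in $r$ (a larger $r$ imposes more boundedness constraints), as required.

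\textbf{Verifying the completing property.} First I would check that $(A^n_r)_{r \in \N} = (A^n_{\alpha\mid r})_{r}$ is completing in $L(E, X_n)$. Here is where the sequence-space structure enters: the issue is to find scalars $(\lambda_r)$ so that $\sum_r \lambda_r T_r$ converges in $L(E, X_n)$ whenever $T_r \in A^n_{\alpha\mid r}$. Each $T_r$ factors through $\varrho^{\alpha_1}$ with a bound on $\|\widetilde{T}_r(x)\|_{X_{n,\alpha_2(M)}}$ by $\alpha_3(M)$ for every $M \leq r$; since the $(X_{n,N})_N$ form an $(\LS)$-spectrum with compact linking maps, one can sum over $r$ using rapidly decaying $\lambda_r$ and the local completeness of $X_n$. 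The compatibility $\varrho^n_{n+1}(A^{n+1}_r) \subseteq A^n_r$ should follow directly: if $T \in A^{n+1}_r$ factors as $\widetilde{T} \circ \varrho^{\alpha_1}$ with the stated bounds, then $(\varrho^n_{n+1})_*(T) = (\varrho^n_{n+1} \circ \widetilde{T}) \circ \varrho^{\alpha_1}$, and $\varrho^n_{n+1}$ only relaxes the target norms, so the bounds persist.

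\textbf{The covering condition and the main obstacle.} The covering hypothesis of Proposition \ref{p:Proj1=0Webbed}, namely
\[
\forall n ~\exists m \geq n ~\forall k \geq m : (\varrho^n_m)_*\bigl(L(E, X_m)\bigr) \subseteq (\varrho^n_k)_*\bigl(L(E, X_k)\bigr) + A^n_{\alpha\mid n},
\]
is precisely what \eqref{condmg} is engineered to deliver. The plan is to take $S \in L(E, X_m)$ arbitrary; by the factorization Lemma \ref{l:factorization} (using that $\mathscr{E}$ is strongly reduced), $S$ factors through some $E_p$, placing $(\varrho^n_m)_*(S)$ inside $(\varrho^n_m)_*(A^m_\beta)$ for an appropriate $\beta \in \mathfrak{C}^n$ with a large enough first coordinate. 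Applying \eqref{condmg} then yields $\gamma \in \mathfrak{C}^n$ with $(\varrho^n_m)_*(A^m_\beta) \subseteq (\varrho^n_k)_*(A^k_\gamma) + A^n_{\alpha\mid n} \subseteq (\varrho^n_k)_*(L(E, X_k)) + A^n_{\alpha\mid n}$, which is exactly the required inclusion. I expect the \textbf{main obstacle} to be the completeness bookkeeping in step (i): producing the summability scalars $(\lambda_r)$ requires carefully tracking how the constraints encoded by $\alpha_2, \alpha_3$ across all $M \leq r$ interact with the compact-inclusion structure of the $(\LS)$-space $X_n$, so that a single weight sequence works simultaneously for all $r$. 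The covering condition itself is then a formal translation of \eqref{condmg} once the factorization lemma is in hand, so the conceptual weight rests on setting up the completing sequences correctly.
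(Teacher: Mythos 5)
Your proposal follows essentially the same route as the paper: it applies Proposition \ref{p:Proj1=0Webbed} to $\mathscr{L}(E,\mathscr{X})$ with the completing sequences $(A^n_{\alpha\mid r})_{r \in \N}$ and derives the covering condition from \eqref{condmg} together with the identity $L(E,X_m)=\bigcup_{\beta\in\mathfrak{C}^n}A^m_\beta$, which the paper obtains from Lemma \ref{l:factorization} \emph{and} Grothendieck's factorization theorem (the latter, absent from your citation, is what produces the quantitative data $\beta_2,\beta_3$ by applying it to each restriction of the factored map to the Banach steps $E_{p,M}$). The completing property is handled exactly as you predict but more simply than you fear: taking $\lambda_r = 2^{-r}$, for each fixed $M$ the tail $\sum_{r \geq M} 2^{-r}\widetilde{T}_r(x)$ converges absolutely in the Banach space $X_{n,\alpha_2(M)}$ by the uniform bound $\alpha_3(M)$, so the compact-inclusion structure of $X_n$ plays no role.
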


	\begin{proof}
By Lemma \ref{l:factorization} and Grothendieck's factorization theorem, it holds that for all $n,m \in \N$
	$$
	L(E,X_m) = \bigcup_{\beta \in \mathfrak{C}^n}A^m_\beta.
	$$
	Hence,  \eqref{condmg} implies that
	\[
		\begin{gathered}
					\exists \alpha \in \mathfrak{C}^\infty ~ \forall n \in \N ~ \exists m \geq n ~ \forall k \geq m : \\
					(\varrho^n_m)_{*}(L(E, X_m)) \subseteq (\varrho^n_k)_{*}(L(E, X_k)) + A^n_{\alpha\mid n}.
				\end{gathered}
			\]
		In view of Proposition \ref{p:Proj1=0Webbed}, it suffices to show that, for each $\alpha \in \mathfrak{C}^\infty$, the sequence $(A^n_{\alpha\mid r})_{r \in \N}$ is completing in $L(E, X_n)$.
		Let $(T_r)_{r \in \N} \in \prod_{r \in \N} A^n_{\alpha\mid r}$ be arbitrary. For every $r \in \N$ there exists $\widetilde{T}_r \in L(E_{\alpha_1},X_n)$ with $\widetilde{T}_r  \circ \varrho^{\alpha_1}  = T_r$ such that for all
$$
 \sup_{\|x\|_{E_{\alpha_1, M}} \leq 1} \|\widetilde{T}_r(x)\|_{X_{n,\alpha_2(M)}} \leq \alpha_3(M), \qquad \forall M \leq r.
$$
Fix $M \in \N$. For all $x \in E_{\alpha_1,M}$  it holds that
			\[ \sum_{r \geq M} 2^{-r}   \|\widetilde{T}_{r}(x) \|_{X_{n, \alpha_2(M)}}\leq 2 \|x\|_{E_{\alpha_1, M}} \alpha_3(M) . \]
This implies that $\sum_{r \in \N} 2^{-r} \widetilde{T}_{r}$ converges to an element of $L(E_{\alpha_1}, X_n)$. Hence, 
$$
\sum_{r \in \N} 2^{-r} T_{r} = \left(\sum_{r \in \N} 2^{-r} \widetilde{T}_r \right) \circ \varrho^{\alpha_1}
$$ 
converges to an element of $L(E, X_n)$.
	\end{proof}

\subsection{The proof of Theorem \ref{t:SplittingResult}}	

We now apply Propositions  \ref{p:Proj^1=0=>Ext^1=0} and \ref{p:Proj1L(X, Y)=0}  to show  Theorem \ref{t:SplittingResult}. We start by discussing when the pair $(\lambda^1(v,w), \lambda^\infty(x,y)$ is locally splitting.

		\begin{lemma}
	\label{l:SeqSpLocallySplitting}
		Let $v, w, x, y$ be weight matrices satisfying \emph{(S)}. If $y$ satisfies $(\DN)$ and $w$ satisfies $(\Omega)$, then the pair $(\lambda^1(v, w), \lambda^\infty(x, y))$ is locally splitting.
	\end{lemma}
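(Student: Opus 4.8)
The plan is to produce explicit strongly reduced $(\LS)$-spectra representing the two spaces and then verify the local splitting condition one level at a time using Proposition \ref{p:SplittingWeightMatrices}(2). First I would invoke Lemma \ref{l:EpsilonProdWeights}. Since $v,w$ satisfy (S), applying it with $p = 1$ yields dual weight matrices $a_n$, $n \in \N$, on $\mathcal{I} \times \mathcal{J}$ such that each $a_n^\circ$ satisfies (S), the spectrum $\mathscr{E} := (k^1(a_n))$ (with inclusions as spectral mappings) is strongly reduced, and $\lambda^1(v,w) = \varprojlim_{n} k^1(a_n)$ as lcHs; moreover, since $w$ satisfies $(\Omega)$, the third bullet of the lemma guarantees that each $a_n^\circ$ satisfies $(\Omega)$ as well. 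Applying the same lemma to $\lambda^\infty(x,y)$ (i.e.\ with $x,y$ in the roles of $v,w$ and $p = \infty$) produces dual weight matrices $b_n$ with $b_n^\circ$ satisfying (S), a strongly reduced spectrum $\mathscr{X} := (k^\infty(b_n))$, and $\lambda^\infty(x,y) = \varprojlim_{n} k^\infty(b_n)$; here, since $y$ satisfies $(\DN)$, each $b_n^\circ$ inherits $(\DN)$. Thus $\mathscr{E}$ and $\mathscr{X}$ are strongly reduced $(\LS)$-spectra with $\lambda^1(v,w) = \Proj \mathscr{E}$ and $\lambda^\infty(x,y) = \Proj \mathscr{X}$, and it remains only to check that the pair $(\mathscr{E}, \mathscr{X})$ is locally splitting.

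By the definition of a locally splitting pair of spectra, this amounts to showing $\Ext^1_{\LS}(k^1(a_m), k^\infty(b_n)) = 0$ for all $m, n \in \N$. To match the shape of Proposition \ref{p:SplittingWeightMatrices}(2), I would rewrite $k^1(a_m) = k^1\big((a_m^\circ)^\circ\big)$ and $k^\infty(b_n) = k^\infty\big((b_n^\circ)^\circ\big)$, and then apply that proposition with the weight matrix $a_m^\circ$ playing the role of $w$ (it satisfies (S) and $(\Omega)$ by the previous paragraph) and the weight matrix $b_n^\circ$ playing the role of $v$ (it satisfies (S) and $(\DN)$). Proposition \ref{p:SplittingWeightMatrices}(2) then gives
\[
\ExtLS^1\big(k^1\big((a_m^\circ)^\circ\big), k^\infty\big((b_n^\circ)^\circ\big)\big) = \Ext^1_{\LS}(k^1(a_m), k^\infty(b_n)) = 0,
\]
as required, and this holds for every pair $(m,n)$.

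Combining the two steps, $(\mathscr{E}, \mathscr{X})$ is locally splitting, so by definition the pair $(\lambda^1(v,w), \lambda^\infty(x,y))$ is locally splitting. This argument is essentially organizational rather than analytic: the real content has already been packaged into Lemma \ref{l:EpsilonProdWeights} (which supplies strong reducedness and the inheritance of $(\DN)$/$(\Omega)$ under the passage to the $(\LS)$-spectrum) and into Proposition \ref{p:SplittingWeightMatrices}(2). The one point requiring care, and the place I expect the only friction, is the bookkeeping of the $\circ$-duality and the correct pairing of roles in Proposition \ref{p:SplittingWeightMatrices}(2)---in particular noticing that exactly the ``dual-side'' matrices $w$ and $y$ must carry $(\Omega)$ and $(\DN)$ respectively, which is why the hypotheses of the lemma place no condition on $v$ and $x$ beyond (S).
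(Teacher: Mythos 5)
Your proposal is correct and is precisely the argument the paper intends: its proof of Lemma \ref{l:SeqSpLocallySplitting} is the one-line citation of Lemma \ref{l:EpsilonProdWeights} and Proposition \ref{p:SplittingWeightMatrices}(2), and you have unpacked it faithfully, including the key bookkeeping points (applying Lemma \ref{l:EpsilonProdWeights} with $p=1$ to $(v,w)$ and $p=\infty$ to $(x,y)$, using $(a_m^\circ)^\circ = a_m$, and pairing $a_m^\circ$ with the $(\Omega)$-role and $b_n^\circ$ with the $(\DN)$-role in Proposition \ref{p:SplittingWeightMatrices}(2)).
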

	\begin{proof}
	This follows from Proposition \ref{p:SplittingWeightMatrices}(2) and Lemma \ref{l:EpsilonProdWeights}.
	\end{proof}

Next, we study when $\Proj^1(\lambda^1(v, w), \lambda^\infty(x, y)=0$ with the aid of Proposition \ref{p:Proj1L(X, Y)=0}.  Let $\mathcal{A} = (a_n)_{n \in \N}$  be a weight matrix system on an index set  $\mathcal{I}$ and let $b$ be a dual weight matrix on an index set $\mathcal{J}$. For $\alpha \in \mathfrak{C}$ we define $A_\alpha(\mathcal{A},b)$ as the set consisting of all $T \in L(\lambda^1(\mathcal{A}), k^\infty(b))$ for which there exists $\widetilde{T} \in L( k^1(a_{\alpha_1}), k^\infty(b))$ with $\widetilde{T}_{\mid \lambda^1(\mathcal{A})}  = T$ such that 
$$
 \sup_{\|c\|_{1, a_{\alpha_1,M}} \leq 1} \|\widetilde{T}(c)\|_{\infty,b_{\alpha_2(M)}} \leq \alpha_3(M), \qquad \forall M \leq \ell(\alpha) .
$$

		\begin{lemma}
		\label{l:CharBoundedSp}
		Let $\mathcal{A} = (a_n)_{n \in \N}$  be a weight matrix system on an index set $\mathcal{I}$, let $b$ be a dual weight matrix on an index set $\mathcal{J}$, and let $\alpha \in \mathfrak{C}^\infty$. 
		\begin{enumerate}
		\item   Let $T_{i, j} \in \C$, $(i, j) \in \mathcal{I} \times \mathcal{J}$, be numbers satisfying
		\begin{equation}
\label{coochar2} |T_{i, j}| \leq \inf_{M  \in \N} \alpha_3(M) \frac{a_{\alpha_1, M, i}}{b_{\alpha_2(M), j}}, \qquad \forall (i, j) \in \mathcal{I} \times \mathcal{J} . 
\end{equation}
	Then,
	$$
T: \lambda^1(\mathcal{A}) \to k^\infty(b), \quad c \mapsto  \left(\sum_{i \in \mathcal{I}} c_i T_{i, j}\right)_{j \in \mathcal{J}},
	$$
	is a well-defined mapping  that belongs to  $A_\alpha(\mathcal{A},b)$.
	\item  If $T \in A_\alpha(\mathcal{A},b)$, there exist unique numbers  $T_{i, j} \in \C$, $(i, j) \in \mathcal{I} \times \mathcal{J}$, satisfying \eqref{coochar2} such that 
	$$
				T(c) = \left(\sum_{i \in \mathcal{I}}  T_{i, j}c_i\right)_{j \in \mathcal{J}} , \qquad \forall c \in \lambda^1(\mathcal{A}) . 
$$
			\end{enumerate}
	\end{lemma}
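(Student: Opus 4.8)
The plan is to treat this as a standard matrix-representation result for operators between the sequence spaces, with the canonical unit vectors $e_i = (\delta_{i,i'})_{i' \in \mathcal{I}}$ playing the central role. The key observation is that every $e_i$ is finitely supported and hence lies in each $k^1(a_n)$, so $e_i \in \lambda^1(\mathcal{A})$; dually, for each $N$ the $j$-th coordinate functional $d \mapsto d_j$ is continuous on $\ell^\infty(b_N)$ and therefore on $k^\infty(b)$. Together these let me pass freely between an operator and its matrix of coordinates.

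For part (1), I would first work on a single Banach step. Fix $M \in \N$ and $c \in \ell^1(a_{\alpha_1,M})$. Using \eqref{coochar2} with the index $M$, the series $\sum_i c_i T_{i,j}$ converges absolutely for each $j$, and
\[
\Big| \sum_{i \in \mathcal{I}} c_i T_{i,j} \Big| \, b_{\alpha_2(M),j} \leq \alpha_3(M) \sum_{i \in \mathcal{I}} |c_i| a_{\alpha_1,M,i} = \alpha_3(M) \|c\|_{1,a_{\alpha_1,M}}.
\]
Taking the supremum over $j$ shows that $T$ maps $\ell^1(a_{\alpha_1,M})$ continuously into $\ell^\infty(b_{\alpha_2(M)})$ with operator norm at most $\alpha_3(M)$. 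Since these maps are all given by the same coordinatewise formula, they are compatible across the inductive spectra, and the universal property of the inductive limit $k^1(a_{\alpha_1}) = \varinjlim_{M} \ell^1(a_{\alpha_1,M})$ produces a single $\widetilde{T} \in L(k^1(a_{\alpha_1}), k^\infty(b))$ extending all of them. By construction $\widetilde{T}$ satisfies the norm bounds defining $A_\alpha(\mathcal{A},b)$, and its restriction to the subspace $\lambda^1(\mathcal{A}) \subseteq k^1(a_{\alpha_1})$ is exactly $T$; hence $T \in A_\alpha(\mathcal{A},b)$.

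For part (2), let $T \in A_\alpha(\mathcal{A},b)$ with extension $\widetilde{T}$. I would define $T_{i,j}$ to be the $j$-th coordinate of $T(e_i) = \widetilde{T}(e_i) \in k^\infty(b)$; uniqueness is then immediate, since any representing matrix is forced to consist of precisely these coordinates. To obtain \eqref{coochar2}, note that $\|e_i\|_{1,a_{\alpha_1,M}} = a_{\alpha_1,M,i}$, so applying the defining bound of $A_\alpha$ to the normalized vector $e_i / a_{\alpha_1,M,i}$ gives $|T_{i,j}|\, b_{\alpha_2(M),j} \leq \alpha_3(M)\, a_{\alpha_1,M,i}$ for every $j$, and taking the infimum over $M$ yields the estimate. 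To recover the series formula I would use density: any $c \in \lambda^1(\mathcal{A})$ lies in some $\ell^1(a_{\alpha_1,M_0})$, where $\sum_i c_i e_i$ converges to $c$; applying the continuous operator $\widetilde{T}$ and then the continuous $j$-th coordinate functional gives $T(c)_j = \sum_i c_i T_{i,j}$, as claimed.

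The individual estimates are routine; the two points requiring care are the gluing step in part (1) --- verifying that the Banach-level maps are genuinely compatible so that the inductive-limit extension $\widetilde{T}$ exists and is continuous --- and, in part (2), the interchange of the operator with the infinite sum, which rests on the density of finitely supported sequences in the step $\ell^1(a_{\alpha_1,M_0})$ together with the continuity of the coordinate functionals on $k^\infty(b)$. Neither is a deep obstacle, but both require the inductive-limit structure to be invoked explicitly rather than argued purely formally.
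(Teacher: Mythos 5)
Your proposal is correct and follows essentially the same route as the paper: the single coordinatewise estimate $\sup_j b_{\alpha_2(M),j}\sum_i |T_{i,j}||c_i| \leq \alpha_3(M)\|c\|_{1,a_{\alpha_1,M}}$ for part (1), and unit vectors $e_i$ together with density of the finitely supported sequences for part (2). The only difference is presentational --- you spell out the inductive-limit gluing that the paper compresses into ``this implies the result,'' and you invoke density at the Banach step $\ell^1(a_{\alpha_1,M_0})$ rather than in $\lambda^1(\mathcal{A})$ itself, both of which are harmless refinements of the same argument.
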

	\begin{proof}
(1) For all $M \in \N$ and  $c \in \ell^1( a_{\alpha_1,M})$, it holds that
$$
\sup_{j \in \mathcal{J}} b_{\alpha_2(M), j} \sum_{i \in \mathcal{I}}  |T_{i, j}||c_i| \leq  \alpha_3(M)\|c\|_{1, a_{\alpha_1,M}}.
$$
This implies the result. \\
(2) Let $c_{00}(\mathcal{I})$ denote the space consisting of all $c \in \C^\mathcal{I}$ with only finitely  many non-zero entries.	 For $i \in \mathcal{I}$ we set $e_i = (\delta_{i,i'})_{i' \in \mathcal{I}}$, where $\delta_{i,i'}$ is the Kronecker delta symbol.  
Put $T(e_i) = (T_{i,j})_{j \in \mathcal{J}}$. Then,
\begin{equation}
\label{reprfins}
T(c) = \left(\sum_{i \in \mathcal{I}}  T_{i, j}c_i\right)_{j \in \mathcal{J}}, \qquad \forall c \in c_{00}(I).
\end{equation}
Since $T \in A_\alpha(\mathcal{A},b)$, we find that for all $M \in \N$ and  $(i, j) \in \mathcal{I} \times \mathcal{J}$
$$
|T_{i,j}| \leq \frac{ \|T(e_i)\|_{\infty,b_{\alpha_2(M)}}}{b_{\alpha_2(M), j}} \leq \alpha_3(M) \frac{\|e_i\|_{1, a_{\alpha_1,M}}}{b_{\alpha_2(M), j}} = \alpha_3(M) \frac{a_{\alpha_1,M}}{b_{\alpha_2(M), j}}.
$$
The result now  follows from \eqref{reprfins}, part (1), and the fact that  $c_{00}(\mathcal{I})$ is dense in  $\lambda^1(\mathcal{A})$.
\end{proof}

From now on, we fix four weight matrices $v, w, x,y$ on index sets $\mathcal{I}_0$, $\mathcal{I}_1$, $\mathcal{J}_0$, $\mathcal{J}_1$, respectively. We define the index sets $\mathcal{I} = \mathcal{I}_0 \times \mathcal{I}_1$ and $\mathcal{J} = \mathcal{J}_0 \times \mathcal{J}_1$,  and write $\mathcal{A} = \mathcal{A}_{v,w}$ and $\mathcal{B} = \mathcal{A}_{x,y}$.  Thus,
$$
a_{n,N,i} = \frac{v_{n,i_0}}{w_{N,i_1}}, \quad i = (i_0,i_1) \in \mathcal{I}, \qquad b_{n,N,j} = \frac{x_{n,j_0}}{y_{N,j_1}}, \quad j= (j_0,j_1) \in \mathcal{J}.
$$
For $n \in \N$, $\alpha \in \mathfrak{C}$, and  $(i, j) \in \mathcal{I} \times \mathcal{J}$, we define
$$
		C^n_{\alpha}(i, j) = \inf_{M \leq \ell(\alpha)} \alpha_3(M) \frac{a_{\alpha_1, M, i}}{b_{n,\alpha_2(M), j}}  =\frac{v_{\alpha_1, i_0}}{x_{n, j_0}} \inf_{M \leq \ell(\alpha)} \alpha_3(M) \frac{y_{\alpha_2(M), j_1}}{w_{M, i_1}}.  
$$

	\begin{lemma}
		\label{l:CharOfDecomposition} Let $k,m,n \in \N$ and $\alpha, \beta, \gamma \in \mathfrak{C}$ with $\ell(\beta) \leq \min\{\ell(\alpha), \ell(\gamma)\}$. Assume that $y$ satisfies $(\DN)$. 
		If
						\begin{equation}
				\label{eq:DecompositionCond} 
				C^m_\beta(i, j) \leq \min \{ C^k_\gamma(i, j),C^n_\alpha(i, j)\} , \qquad \forall (i, j) \in \mathcal{I} \times \mathcal{J}, 
			\end{equation}
			then
			\begin{equation}
				\label{eq:Decomposition} 
				A_\beta(\mathcal{A}, b_m) \subseteq A_\gamma(\mathcal{A}, b_k) +  A_\alpha(\mathcal{A}, b_n).
			\end{equation}
			\end{lemma}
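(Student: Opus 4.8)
The plan is to reduce everything to the matrix level and then perform an explicit cut-off of the target index set. First I would fix $T \in A_\beta(\mathcal{A}, b_m)$ and record its matrix $(T_{i,j})_{(i,j) \in \mathcal{I} \times \mathcal{J}}$, obtained following the proof of Lemma \ref{l:CharBoundedSp} by evaluating the extension $\widetilde{T} \in L(k^1(a_{\beta_1}), k^\infty(b_m))$ on the unit vectors $e_i$. Membership in $A_\beta(\mathcal{A}, b_m)$ then gives two pieces of information: the entrywise bound $|T_{i,j}| \le C^m_\beta(i,j)$ from the finitely many norm estimates with $M \le \ell(\beta)$ (as in \eqref{coochar2}), and the continuity of $\widetilde{T}$ as a map $k^1(a_{\beta_1}) \to k^\infty(b_m)$, which by Grothendieck's factorization theorem supplies, for every source level $M$, a target level $\lambda(M)$ with $\widetilde{T}\colon \ell^1(a_{\beta_1, M}) \to \ell^\infty(b_{m, \lambda(M)})$ bounded. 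The whole difficulty is that the first bound controls only finitely many levels, while membership in $A_\gamma(\mathcal{A}, b_k)$ and $A_\alpha(\mathcal{A}, b_n)$ requires genuine continuity into the respective $(\LB)$-targets.

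I would then define $T = T^\gamma + T^\alpha$ by a hard cut-off in the target index: choose a subset $Q \subseteq \mathcal{J}$, set $T^\gamma_{i,j} = T_{i,j}$ for $j \in Q$ and $T^\gamma_{i,j} = 0$ otherwise, and put $T^\alpha = T - T^\gamma$. Since the three targets $b_k, b_m, b_n$ differ only through the factor $x_{\cdot, j_0}$ in the $\mathcal{J}_0$-direction, $Q$ must be chosen to balance the growth of $x_{k, j_0}/x_{m, j_0}$ against the weights $y_{\cdot, j_1}$: roughly, $j = (j_0, j_1) \in Q$ exactly when $x_{k, j_0}/x_{m, j_0}$ is dominated by a fixed base weight $y_{n_0, j_1}$ coming from the $(\DN)$-data of $y$. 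Whatever the choice of $Q$, the entrywise bounds for both pieces are immediate from \eqref{eq:DecompositionCond}, namely $|T^\gamma_{i,j}| \le C^m_\beta(i,j) \le C^k_\gamma(i,j)$ and $|T^\alpha_{i,j}| \le C^m_\beta(i,j) \le C^n_\alpha(i,j)$, which yields the finitely many norm estimates at the levels $M \le \ell(\gamma)$ and $M \le \ell(\alpha)$, respectively.

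The easy half is the continuity of $T^\alpha$ into $k^\infty(b_n)$. In the case relevant for \eqref{condmg} one has $n \le m$, hence $b_n \le b_m$ and a continuous inclusion $k^\infty(b_m) \hookrightarrow k^\infty(b_n)$. As $T^\alpha$ is dominated entrywise by $T$, pushing the factorization levels $\lambda(M)$ of $\widetilde{T}$ through this inclusion, together with the entrywise bound $|T^\alpha_{i,j}| \le C^n_\alpha(i,j)$ to fix the domain level $\alpha_1$, produces target levels for all source levels, so that $\widetilde{T}^\alpha \in L(k^1(a_{\alpha_1}), k^\infty(b_n))$ and $T^\alpha \in A_\alpha(\mathcal{A}, b_n)$.

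The main obstacle, and the place where $(\DN)$ of $y$ is essential, is the continuity of $T^\gamma$ into the \emph{smaller} space $k^\infty(b_k)$. For source levels $M \le \ell(\gamma)$ the required boundedness $\ell^1(a_{\gamma_1, M}) \to \ell^\infty(b_{k, \gamma_2(M)})$ follows directly from $|T^\gamma_{i,j}| \le C^k_\gamma(i,j)$; but for $M > \ell(\gamma)$ the matrix bound is powerless, since passing from the target $b_m$ (where $\widetilde{T}$ is bounded at level $\lambda(M)$) to the target $b_k$ costs precisely the factor $x_{k, j_0}/x_{m, j_0}$. Here the cut-off pays off: on $Q$ this factor is controlled by the base weight $y_{n_0, j_1}$, and $(\DN)$ of $y$ lets me absorb the product of $y_{n_0, j_1}$ with $y_{\lambda(M), j_1}$ into a single higher weight $y_{\tau(M), j_1}$, up to a constant. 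This produces, for every source level $M$, a target level $\tau(M)$ with $\ell^1(a_{\gamma_1, M}) \to \ell^\infty(b_{k, \tau(M)})$ bounded, whence $\widetilde{T}^\gamma \in L(k^1(a_{\gamma_1}), k^\infty(b_k))$ and $T^\gamma \in A_\gamma(\mathcal{A}, b_k)$. Combining the two halves gives $T = T^\gamma + T^\alpha \in A_\gamma(\mathcal{A}, b_k) + A_\alpha(\mathcal{A}, b_n)$, which is \eqref{eq:Decomposition}. I expect the interplay between the \emph{fixed} cut-off $Q$ and the $M$-dependent choice of $\tau(M)$---that is, checking that one and the same $Q$ works simultaneously for all source levels---to be the delicate technical point, and the one genuinely requiring $(\DN)$ of $y$.
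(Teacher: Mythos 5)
Your skeleton is right---split the matrix, note that the finite-level estimates at $M \leq \ell(\gamma)$, $M \leq \ell(\alpha)$ are free from \eqref{eq:DecompositionCond}, identify the high source levels $M$ as the real problem, and use Grothendieck factorization to encode the continuity of $T$ (your $\lambda(M)$ is the paper's extension $\widetilde{\beta} \in \mathfrak{C}^\infty$ of $\beta$)---but there are two genuine gaps. The main one is your ``easy half'': you invoke $n \leq m$ and the inclusion $k^\infty(b_m) \hookrightarrow k^\infty(b_n)$, which is \emph{not} among the lemma's hypotheses---the statement is for arbitrary $k, m, n \in \N$, and \eqref{eq:DecompositionCond} does not make $x_{n,j_0}/x_{m,j_0}$ bounded (freezing $(i, j_1)$ it only yields a bound depending on $j_1$). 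For $n > m$ your $T^\alpha$-argument collapses, and your cut-off cannot rescue it, since $Q$ only controls $x_{k,j_0}/x_{m,j_0}$. The paper avoids any ordering by treating both pieces symmetrically with the same $(\DN)$-mechanism: the splitting set is $\mathcal{Q} = \{(i,j) : C^m_\beta(i,j) \leq C^k_\gamma(i,j)\} \subseteq \mathcal{I} \times \mathcal{J}$, defined by comparing the $C$-functions (so coupled in $i$ and $j$, not a set of columns), and both claims \eqref{eq:IndexExtension1} and \eqref{eq:IndexExtension2} are proved identically: at each point of the relevant region one picks the index $M_{i,j} \leq \ell(\beta)$ realizing the minimum in $C^m_\beta(i,j)$, feeds the pointwise inequality $C^m_\beta \leq C^k_\gamma$ (resp.\ $\leq C^n_\alpha$) into a chain of estimates at that index, and uses $(\DN)$ of $y$ (via $y_{m,j_1}/y_{0,j_1} \leq C\, y_{k,j_1}/y_{m,j_1}$, with only the finitely many values $M_{i,j} \leq \ell(\beta)$ occurring) to decouple the level $M$ from $(i,j)$.

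The second gap is the domain level. You assert the three targets ``differ only through the factor $x_{\cdot,j_0}$,'' but membership in $A_\gamma(\mathcal{A}, b_k)$ requires an extension continuous on $k^1(a_{\gamma_1})$, whereas your factorization data for $\widetilde{T}$ lives at domain level $\beta_1$; for $M > \ell(\gamma)$ you must also absorb the ratio $v_{\beta_1, i_0}/v_{\gamma_1, i_0}$, and likewise $v_{\beta_1, i_0}/v_{\alpha_1, i_0}$ for the other piece---note that in the application one has $\alpha_1 = 0 < \beta_1$, so this is the hard direction of the inclusion, and ``the entrywise bound to fix the domain level'' is not an argument, since that bound says nothing about $M > \ell(\alpha)$. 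Under the stated min-hypothesis this is repairable: freeze $(i_1, j)$ and vary $i_0$ in \eqref{eq:DecompositionCond} to extract $\sup_{i_0} v_{\beta_1,i_0}/v_{\gamma_1,i_0} < \infty$ and $\sup_{i_0} v_{\beta_1,i_0}/v_{\alpha_1,i_0} < \infty$; but you never do this, and the paper's $\mathcal{Q}$-plus-$M_{i,j}$ chain transports the $v$-factors automatically. It is also worth noting that the paper's proof uses \eqref{eq:DecompositionCond} only through $C^m_\beta \leq C^k_\gamma$ on $\mathcal{Q}$ and $C^m_\beta \leq C^n_\alpha$ off $\mathcal{Q}$---which is the form actually verified in the proof of Theorem \ref{t:SplittingResult}---and a column cut-off chosen purely from the weights, as in your proposal, would not survive in that weaker setting either, since off your $Q$ nothing guarantees $C^m_\beta \leq C^n_\alpha$.
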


	\begin{proof}
	Define
			\[ \mathcal{Q} = \{ (i, j) \in \mathcal{I} \times \mathcal{J} \mid C^m_\beta(i, j) \leq C^k_\gamma(i, j) \}. \]
		By \eqref{eq:DecompositionCond}, it holds that
		\begin{equation} 
				\label{eq:CmEst0}
				C^m_\beta(i, j) 
				\leq 
					\begin{cases} 
						C^k_\gamma(i, j) , & (i, j) \in \mathcal{Q} , \\ 
						C^n_\alpha(i, j) , & (i, j) \notin \mathcal{Q} . 
					\end{cases} 
			\end{equation}
Let $T \in A_\beta(\mathcal{A}, b_m)$ be arbitrary. Grothendieck's factorization theorem implies that there is $\widetilde{\beta} \in \mathfrak{C}^\infty$ with $\widetilde{\beta}_{\mid \ell(\beta)} = \beta$ such that  $T \in  A_{\widetilde{\beta}}(\mathcal{A}, b_m)$. 		
We claim that
			\begin{equation}
				\label{eq:IndexExtension1}
				\begin{gathered}
					\forall M > \ell(\gamma) ~ \exists N_{0, M} \in \N , R_{0, M} > 0 ~ \forall (i, j) \in \mathcal{Q} :  \\
					\widetilde{\beta}_3(M) \frac{a_{\beta_1, M, i}}{b_{m, \widetilde{\beta}_2(M), j}} \leq R_{0, M} \frac{a_{\gamma_1, M, i}}{b_{k, N_{0, M}, j}}, 
				\end{gathered}
			\end{equation}
		and
			\begin{equation}
				\label{eq:IndexExtension2}
				\begin{gathered}
					\forall M > \ell(\alpha) ~ \exists N_{1, M} \in \N , R_{1, M} > 0 ~ \forall (i, j) \notin \mathcal{Q} :  \\
					\widetilde{\beta}_3(M) \frac{a_{\beta_1, M, i}}{b_{m, \widetilde{\beta}_2(M), j}} \leq R_{1, M} \frac{a_{\alpha_1, M, i}}{b_{n, N_{1, M}, j}}.
				\end{gathered}
			\end{equation}
		Before proving these claims, let us show how they imply the result. We define $\widetilde{\gamma} \in \mathfrak{C}^\infty$ as $\widetilde{\gamma}_1 = \gamma_1$,
		
		\[ 
				\widetilde{\gamma}_2(M) = \begin{cases} \gamma_2(M) , & M \leq \ell(\gamma), \\ N_{0, M}, & M > \ell(\gamma), \end{cases} \quad
				\widetilde{\gamma}_3(M) = \begin{cases} \gamma_3(M) , & M \leq \ell(\gamma), \\ R_{0, M}, & M> \ell(\gamma), \end{cases}, \qquad M \in \N,
			\]
				and  $\widetilde{\alpha} \in \mathfrak{C}^\infty$ as $\widetilde{\alpha}_1 = \alpha_1$,
		
			\[
				\widetilde{\alpha}_2(M) = \begin{cases} \alpha_2(M) , & M \leq \ell(\alpha) , \\ N_{1, M} , & M >  \ell(\alpha), \end{cases} \quad
				\widetilde{\alpha}_3(M) = \begin{cases} \alpha_3(M) , & M \leq  \ell(\alpha) , \\ R_{1, M} , & M >  \ell(\alpha),\end{cases} \qquad M \in \N.
			\]			
Then, $\widetilde{\gamma}_{\mid \ell(\gamma)} = \gamma$ and $\widetilde{\alpha}_{\mid \ell(\alpha)} = \alpha$. By \eqref{eq:CmEst0}--\eqref{eq:IndexExtension2}, it holds that
			\begin{equation} 
				\label{eq:CmEst}
				C^m_{\widetilde{\beta}}(i, j) 
				\leq 
					\begin{cases} 
						C^k_{\widetilde{\gamma}}(i, j) , & (i, j) \in \mathcal{Q} , \\ 
						C^n_{\widetilde{\alpha}}(i, j) , & (i, j) \notin \mathcal{Q} . 
					\end{cases} 
			\end{equation}
	Let $T_{i, j} \in \C$, $(i, j) \in \mathcal{I} \times \mathcal{J}$, be as in Lemma  \ref{l:CharBoundedSp}(2) for $T \in  A_{\widetilde{\beta}}(\mathcal{A}, b_m)$.	We define
			\[ R_{i, j} = \begin{cases} T_{i, j} , & (i, j) \in \mathcal{Q} , \\ 0 , & (i, j) \notin \mathcal{Q} , \end{cases} \qquad S_{i, j} = \begin{cases} 0 , & (i, j) \in \mathcal{Q} , \\ T_{i, j} , & (i, j) \notin \mathcal{Q} , \end{cases} \qquad (i, j) \in \mathcal{I} \times \mathcal{J} . \]
Lemma  \ref{l:CharBoundedSp}(2) and \eqref{eq:CmEst} yield 
			\begin{equation} 
				\label{eq:last}
				|R_{i, j}| \leq C^k_{\widetilde{\gamma}}(i, j) , \qquad |S_{i, j}| \leq C^n_{\widetilde{\alpha}}(i, j) , \qquad \forall (i, j) \in \mathcal{I} \times \mathcal{J} . 
				\end{equation}
We define the linear mappings $R: \lambda^1(\mathcal{A}) \to k^\infty(b_k)$ and  $S: \lambda^1(\mathcal{A}) \to k^\infty(b_n)$  as 
$$
R(c) = \left(\sum_{i \in \mathcal{I}}  R_{i, j}c_i\right)_{j \in \mathcal{J}}, \quad S(c) = \left(\sum_{i \in \mathcal{I}} S_{i, j}c_i\right)_{j \in \mathcal{J}}, \qquad c = (c_i)_{i \in \mathcal{I}} \in \lambda^1(\mathcal{A}).
$$		
Then, $ T = R + S$ and, by Lemma  \ref{l:CharBoundedSp}(1) and  \eqref{eq:last},
$$
R \in A_{\widetilde{\gamma}}(\mathcal{A}, b_k) \subseteq  A_{\gamma}(\mathcal{A}, b_k), \qquad S \in A_{\widetilde{\alpha}}(\mathcal{A}, b_n) \subseteq  A_{\alpha}(\mathcal{A}, b_n).
$$ 	
		We now verify \eqref{eq:IndexExtension1} and \eqref{eq:IndexExtension2}. 
		We only show \eqref{eq:IndexExtension1} as \eqref{eq:IndexExtension2} can be shown similarly.
	If $\ell(\gamma) = \infty$, then  \eqref{eq:IndexExtension1} is trivially true. Hence, we assume that $\ell(\beta) \leq \ell(\gamma) < \infty$. Let $M > \ell(\gamma)$ be arbitrary. Since $y$ satisfies $(\DN)$, we may assume without loss of generality that
		\begin{equation}
\label{eq:yDN} 
			\forall m \in \N, \theta \in (0, 1) ~ \exists k \geq m, C> 0 ~ \forall j_1 \in \mathcal{J}_1 :  y_{m, j_1} \leq C y_{k, j_1}^{\theta} y_{0, j_1}^{1 - \theta}. 
		\end{equation}	
		By choosing $\theta = 1/2$, we find that
$$
			\forall m \in \N ~ \exists k \geq m, C> 0 ~ \forall j_1 \in \mathcal{J}_1 :  \frac{y_{m, j_1}}{y_{0,j_1}} \leq C \frac{y_{k, j_1}}{y_{m, j_1}}.
$$
From this, we obtain that there are $N_{0, M} \in \N$ and $C > 0$  such that 
			\[ \frac{y_{\widetilde{\beta}_2(M), j_1}}{y_{\beta_2(M'), j_1}} \leq C \frac{y_{N_{0, M}, j_1}}{y_{\gamma_2(M'), j_1}}, \qquad \forall M' \leq \ell(\beta) , \, j_1 \in \mathcal{J}_1 .  \]
Set	
			\[ R_{0, M} = C \widetilde{\beta}_3(M) \sup_{M' \leq \ell(\beta)} \frac{\gamma_3(M')}{\beta_3(M')} . \]
		
		Let $(i, j) \in \mathcal{Q}$ be arbitrary. Pick $M_{i, j} \leq \ell(\beta)$ such that 
			\[ C^m_\beta(i, j) = \beta_3(M_{i, j}) \frac{a_{\beta_1, M_{i, j}, i}}{b_{m, \beta_2(M_{i, j}), j}} . \]
		Then,
			\begin{align*}
				\frac{a_{\gamma_1, M, i}}{b_{k, N_{0, M}, j}}
				&=  \frac{a_{\gamma_1, M_{i, j}, i}}{b_{k, \gamma_2(M_{i, j}), j}} \frac{a_{\gamma_1, M, i}}{a_{\gamma_1, M_{i, j}, i}} \frac{b_{k, \gamma_2(M_{i, j}), j}}{b_{k, N_{0, M}, j}}  \\
				&\geq  \frac{C^m_\beta(i, j) }{\gamma_3(M_{i, j})}  \frac{w_{M_{i, j},i_1}}{w_{M, i_1}}\frac{y_{N_{0, M}, j_1}}{y_{\gamma_2(M_{i, j}), j_1}}  \\
				&\geq  \frac{1}{C} \frac{\beta_3(M_{i, j})}{\gamma_3(M_{i, j})}  \frac{a_{\beta_1, M_{i, j}, i}}{b_{m, \beta_2(M_{i, j}), j}} \frac{w_{M_{i, j},i_1}}{w_{M, i_1}}\frac{y_{\widetilde{\beta}_2(M), j_1}}{y_{\beta_2(M_{i, j}), j_1}}  \\
	&\geq \frac{\widetilde{\beta}_3(M)}{R_{0,M}}   \frac{v_{\beta_1, i_0}}{w_{M, i_1}} \frac{y_{\widetilde{\beta}_2(M), j_1}} {x_{m, j_0}}  			 =  \frac{\widetilde{\beta}_3(M)}{R_{0,M}}  \frac{a_{\beta_1, M, i}}{b_{m, \widetilde{\beta}_2(M), j}} .
			\end{align*}
		This shows \eqref{eq:IndexExtension1}.
	\end{proof}

	\begin{proposition}
		\label{p:Proj1L(X, Y)=0SeqSp}
Assume that $y$ satisfies $(\DN)$.		If
			\begin{equation}
				\label{eq:SuffCondProj^1=0}
				\begin{gathered}
					\exists \alpha \in \mathfrak{C}^{\infty} ~ \forall n \in \N ~ \exists m \in \N ~ \forall k \in \N, \beta \in \mathfrak{C}^n ~ \exists \gamma \in \mathfrak{C}^n ~ \forall (i, j) \in \mathcal{I} \times \mathcal{J} : \\
					C^m_\beta(i, j) \leq \min \{ C^k_\gamma(i, j) , C^n_{\alpha_{\mid n}}( i, j) \} ,
				\end{gathered}
			\end{equation}
			then $\Proj^{1} L(\lambda^1(\mathcal{A}), \lambda^\infty(\mathcal{B})) = 0$.
	\end{proposition}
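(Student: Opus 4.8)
The plan is to deduce the result from Proposition \ref{p:Proj1L(X, Y)=0} by representing $E = \lambda^1(\mathcal{A})$ and $X = \lambda^\infty(\mathcal{B})$ by their \emph{naive} projective spectra and then recognizing hypothesis \eqref{eq:SuffCondProj^1=0} as exactly condition \eqref{condmg} of that proposition, read through the matrix calculus of Lemmas \ref{l:CharBoundedSp} and \ref{l:CharOfDecomposition}. First I would set up the spectra: write $E = \Proj \mathscr{E}$ with $\mathscr{E} = (k^1(a_n), \varrho^n_m)$ and $X = \Proj \mathscr{X}$ with $\mathscr{X} = (k^\infty(b_n), \varrho^n_m)$, the spectral maps in both cases being the canonical inclusions. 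Since $v,w,x,y$ satisfy (S), each $a_n^\circ$ and $b_n^\circ$ satisfies (S): indeed $a_{n,N'}/a_{n,N}$ reduces to $w_N/w_{N'}$ and $b_{n,N'}/b_{n,N}$ to $y_N/y_{N'}$, so this is just (S) for $w$, resp.\ $y$. Hence $k^1(a_n)$ and $k^\infty(b_n)$ are $(\LS)$-spaces with Banach steps $E_{n,N} = \ell^1(a_{n,N})$ and $X_{n,N} = \ell^\infty(b_{n,N})$. I would then check that both spectra are strongly reduced: given $n$, use (S) for $v$, resp.\ $x$, to pick $m \geq n$ with $v_n/v_m$, resp.\ $x_n/x_m$, vanishing at $\infty$; for $c$ in $k^1(a_m)$, resp.\ $k^\infty(b_m)$, the truncations $c^F$ obtained by restricting the first coordinate index to a finite set $F$ lie in the projective limit, and a direct estimate shows $c^F \to c$ in the step norm $\|\cdot\|_{1, a_{n,N}}$, resp.\ $\|\cdot\|_{\infty, b_{n,N}}$, for $N$ large, the decay coming precisely from the vanishing of $v_n/v_m$, resp.\ $x_n/x_m$. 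Thus $\varrho^n_m(X_m) \subseteq \overline{\varrho^n(\Proj)}$. With these representations, the sets $A^n_\alpha$ of Proposition \ref{p:Proj1L(X, Y)=0} coincide verbatim with the sets $A_\alpha(\mathcal{A}, b_n)$ defined before Lemma \ref{l:CharBoundedSp}.

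By Proposition \ref{p:Proj1L(X, Y)=0} it then suffices to verify \eqref{condmg}, namely that for the $\alpha \in \mathfrak{C}^\infty$ supplied by \eqref{eq:SuffCondProj^1=0} and the associated $m, \gamma$, one has $(\varrho^n_m)_*(A^m_\beta) \subseteq (\varrho^n_k)_*(A^k_\gamma) + A^n_{\alpha_{\mid n}}$ for $k \geq m \geq n$. Since $C^m_\beta(i,j)$ is decreasing in $m$, we may assume $m \geq n$, and \eqref{eq:SuffCondProj^1=0} in particular supplies the data for $k \geq m$.

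Here I would feed the matrix inequality $C^m_\beta \leq \min\{ C^k_\gamma, C^n_{\alpha_{\mid n}} \}$ from \eqref{eq:SuffCondProj^1=0} into Lemma \ref{l:CharOfDecomposition}, taking its $\alpha$ to be $\alpha_{\mid n}$. The length requirement $\ell(\beta) \leq \min\{\ell(\alpha_{\mid n}), \ell(\gamma)\}$ holds because $\beta, \gamma \in \mathfrak{C}^n$ and $\ell(\alpha_{\mid n}) = n$, and $y$ satisfies $(\DN)$ by assumption, so the lemma yields $A_\beta(\mathcal{A}, b_m) \subseteq A_\gamma(\mathcal{A}, b_k) + A_{\alpha_{\mid n}}(\mathcal{A}, b_n)$. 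Finally I would translate this into \eqref{condmg}. Because $\varrho^n_m$ and $\varrho^n_k$ are inclusions, the maps $(\varrho^n_m)_*$ and $(\varrho^n_k)_*$ leave unchanged the matrix representation of an operator provided by Lemma \ref{l:CharBoundedSp}; hence the additive splitting $T = R + S$ of matrices furnished by Lemma \ref{l:CharOfDecomposition} is literally the identity $(\varrho^n_m)_*(T) = (\varrho^n_k)_*(R) + S$ in $L(E, k^\infty(b_n))$, with $R \in A_\gamma(\mathcal{A}, b_k) = A^k_\gamma$ and $S \in A_{\alpha_{\mid n}}(\mathcal{A}, b_n) = A^n_{\alpha_{\mid n}}$. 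This gives \eqref{condmg}, so Proposition \ref{p:Proj1L(X, Y)=0} yields $\Proj^1 \mathscr{L}(E, \mathscr{X}) = 0$; as $\mathscr{X}$ is strongly reduced, this is by definition $\Proj^1 L(\lambda^1(\mathcal{A}), \lambda^\infty(\mathcal{B})) = 0$.

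I expect the main obstacle to be the bookkeeping in this last translation step: one must check carefully that passing an operator through the post-composition functor $(\varrho^n_m)_*$ genuinely preserves its matrix and its membership in the $A$-sets, and that the decomposition of Lemma \ref{l:CharOfDecomposition}, whose three summands a priori live in the different codomains $k^\infty(b_m), k^\infty(b_k), k^\infty(b_n)$, is correctly reinterpreted inside the single space $L(E, k^\infty(b_n))$. A secondary point requiring care is the verification that the naive spectra are strongly reduced spectra of $(\LS)$-spaces, since this is precisely what legitimizes both the identification $A^n_\alpha = A_\alpha(\mathcal{A}, b_n)$ and the concluding appeal to the representation-independent definition of $\Proj^1 L$.
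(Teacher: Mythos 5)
Your route is the paper's own (reduce to Proposition \ref{p:Proj1L(X, Y)=0} and verify \eqref{condmg} via Lemma \ref{l:CharOfDecomposition}), and the translation step you singled out as the main obstacle is indeed harmless; but the point you labelled secondary is where the proposal breaks. The claim that each $a_n^\circ$ and $b_n^\circ$ satisfies (S) ``because $a_{n,N'}/a_{n,N}$ reduces to $w_N/w_{N'}$'' is false on the product index set: for $\mathcal{A} = \mathcal{A}_{v,w}$ on $\mathcal{I} = \mathcal{I}_0 \times \mathcal{I}_1$ one has
\[
\frac{a_{n,N',(i_0,i_1)}}{a_{n,N,(i_0,i_1)}} = \frac{w_{N,i_1}}{w_{N',i_1}},
\]
which depends only on $i_1$, hence is constant along every infinite fiber $\mathcal{I}_0 \times \{i_1\}$ and, being strictly positive there, cannot vanish at $\infty$ on $\mathcal{I}$. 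Concretely, the inclusion $\ell^1(a_{n,N}) \hookrightarrow \ell^1(a_{n,N'})$ acts as a nonzero multiple of the identity on the fiber-supported copy of $\ell^1(\mathcal{I}_0)$ and is never compact; moreover this fiber is a complemented infinite-dimensional Banach subspace of $k^1(a_n)$, so $k^1(a_n)$ is not even Montel and therefore not an $(\LS)$-space (the same applies to $k^\infty(b_n)$ on $\mathcal{J}_0 \times \mathcal{J}_1$). Your naive spectra are thus spectra of $(\LB)$-spaces, and Proposition \ref{p:Proj1L(X, Y)=0} --- whose proof rests on the factorization Lemma \ref{l:factorization}, and whose conclusion is tied to the representation-independent notion $\Proj^1 L(E,X) = 0$ only through strongly reduced spectra of $(\LS)$-spaces --- cannot be applied to them. (Your strong-reducedness check via truncations is fine, but moot.)

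This is exactly the problem Lemma \ref{l:EpsilonProdWeights} exists to solve: one replaces $a_{n,N,(i_0,i_1)} = v_{n,i_0}/w_{N,i_1}$ by the damped weights $(v_{n,i_0}/w_{N,i_1})\, e^{-N \alpha_{n,i_0}}$ (and likewise for $\mathcal{B}$), obtaining strongly reduced spectra of $(\LS)$-spaces with the same projective limits; the paper's proof tacitly routes through these. The repair costs you the ``coincide verbatim'' identification $A^n_\alpha = A_\alpha(\mathcal{A}, b_n)$, since the Banach steps in Proposition \ref{p:Proj1L(X, Y)=0} are then the damped ones: one must still relate the exhaustion $L(\lambda^1(\mathcal{A}), k^\infty(b_m)) = \bigcup_\beta A_\beta(\mathcal{A},b_m)$ and condition \eqref{condmg} for the damped data to the naive sets $A_\beta(\mathcal{A}, b_m)$ and quantities $C^m_\beta$ appearing in \eqref{eq:SuffCondProj^1=0} --- this is the bookkeeping the paper compresses into the parenthetical ``the spectral mappings, being inclusions, are omitted.'' The remainder of your argument is correct as written: $\ell(\beta) \leq \min\{\ell(\gamma), \ell(\alpha_{\mid n})\}$ holds since $\beta, \gamma \in \mathfrak{C}^n$, the hypothesis $(\DN)$ for $y$ is exactly what Lemma \ref{l:CharOfDecomposition} needs, $C^m_\beta$ is indeed non-increasing in $m$, and reinterpreting the matrix splitting $T = R + S$ inside the single space $L(\lambda^1(\mathcal{A}), k^\infty(b_n))$ is legitimate precisely because all spectral maps are inclusions.
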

	
	\begin{proof}
		By Proposition \ref{p:Proj1L(X, Y)=0}, it  suffices to show that (the spectral mappings, being inclusions, are omitted)
		\[
				\begin{gathered}
					\exists \alpha \in \mathfrak{C}^{\infty} ~ \forall n \in \N ~ \exists m \in \N ~ \forall k \in \N, \beta \in \mathfrak{C}^n ~ \exists \gamma \in \mathfrak{C}^n : \\
					A_\beta(\mathcal{A},b_m) \subseteq A_\gamma(\mathcal{A},b_k) + A_{\alpha_{\mid n}}(\mathcal{A},b_n). 
				\end{gathered}
			\]
This follows from Lemma \ref{l:CharOfDecomposition} and \eqref{eq:SuffCondProj^1=0}.
	\end{proof}
	
We are ready to show Theorem \ref{t:SplittingResult}.
	
	\begin{proof}[Proof of Theorem \ref{t:SplittingResult}]
		In view of Proposition \ref{p:Proj^1=0=>Ext^1=0}, Lemma \ref{l:SeqSpLocallySplitting}, and Proposition \ref{p:Proj1L(X, Y)=0SeqSp}, it suffices to show   \eqref{eq:SuffCondProj^1=0}.
	
		As $v$ and $y$ satisfy $(\DN)$, we may assume without loss of generality that
		\begin{eqnarray}
				\label{eq:vDN} 
				\forall m \in \N, \theta \in (0, 1) ~ \exists k \geq m, C> 0 ~ \forall i_0 \in \mathcal{I}_0 :  v_{m, i_0} \leq C v_{k, i_0}^{\theta} v_{0, i_0}^{1 - \theta}, \\ \label{eq:yDN}
				\forall m \in \N, \theta \in (0, 1) ~ \exists k \geq m, C> 0 ~ \forall j_1 \in \mathcal{J}_1 :  y_{m, j_1} \leq C y_{k, j_1}^{\theta} y_{0, j_1}^{1 - \theta}. 
			\end{eqnarray}
We define $\alpha \in \mathfrak{C}^\infty$ as
			\[ \alpha_1 = 0, \qquad \alpha_2(M) = 0 , \qquad \alpha_3(M) = 1 , \qquad M \in \N . \]
		Let $n \in \N$ be arbitrary. 
		As $x$ satisfies $(\Omega)$, there is  $m \geq n$ such that
			\begin{equation}
				\label{eq:xOmega}
				\forall k \geq m ~ \exists \theta \in (0, 1), C > 0 ~ \forall j_0 \in \mathcal{J}_0 :  x_{k, j_0}^{\theta} x_{n, j_0}^{1 - \theta} \leq C x_{m, j_0} .
			\end{equation}
		Let $k \in \N$ and $\beta \in \mathfrak{C}^n$ be arbitrary. 
		By \eqref{eq:xOmega}, there are $\theta \in (0, 1)$ and  $C_0 > 0$ such that
		\begin{equation}
\label{Omega2}
		\left ( \frac{x_{n,j_0}}{x_{m,j_0}}\right)^{\frac{1-\theta}{\theta}} \leq C_0 \frac{x_{m,j_0}}{x_{k,j_0}}, \qquad \forall j_0 \in \mathcal{J}_0.
		\end{equation}
	Set  $p = \max_{M \leq n}\beta_2(M)$. By \eqref{eq:vDN} with $m = \beta_1$ and \eqref{eq:yDN} with $m =p$, there are $k_1,k_2 \in \N$ and  $C_1,C_2 > 0$ such that
		\begin{eqnarray}
		\label{DN2}
		&& \frac{v_{\beta_1,i_0}}{v_{k_1,i_0}} \leq C_1 \left (\frac{v_{0,i_0}}{v_{\beta_1,i_0}}\right)^{\frac{1-\theta}{\theta}}
, \quad  \forall i_0 \in \mathcal{I}_0, \\ \label{DN3}
	&& y^{1/\theta}_{p,j_1} \leq C_2 y_{k_2,j_1} y^{\frac{1-\theta}{\theta}}_{0,j_1}, \qquad  \forall j_1 \in \mathcal{J}_1.
\end{eqnarray}
Set $C' = C_0C_1C_2$. We define $\gamma \in \mathfrak{C}^n$ as
	\[ \gamma_1 = k_1 , \qquad \gamma_2(M) = k_2 , \qquad \gamma_3(M) = C', \qquad M \leq n. \]
Set
		$$
		\mathcal{Q} = \left\{ (i, j) \in \mathcal{I} \times \mathcal{J} \mid  C^m_\beta(i,j) > C^n_{\alpha_{\mid n}}(i, j)\right \}.
		$$
It suffices to show that
\begin{equation}
\label{TPw}
		C^m_\beta(i,j) \leq C^k_\gamma(i,j), \qquad \forall (i, j) \in \mathcal{Q}.
\end{equation}
 For all $(i, j) \in \mathcal{I} \times \mathcal{J}$ it holds that
\[ C^n_{\alpha_{\mid n}}(i, j) = \frac{v_{0, i_0}}{x_{n, j_0}} \frac{y_{0, j_1}}{w_{n, i_1}}, \quad C^m_\beta(i,j)  = \frac{v_{\beta_1, i_0}}{x_{m, j_0}} \min_{M \leq n} \frac{y_{\beta_2(M), j_1}}{w_{M, i_1}}, \quad C^k_{\gamma}(i, j) = C'\frac{v_{k_1,i_0}}{x_{k, j_0}} \frac{y_{k_2, j_1}}{w_{n, i_1}}.  \]		
Note that \eqref{TPw} is equivalent to	
$$
\frac{v_{\beta_1, i_0}}{v_{k_1, i_0}}\min_{M \leq n} \frac{y_{\beta_2(M), j_1}}{w_{M, i_1}} \leq  C'\frac{x_{m, j_0}}{x_{k, j_0}}  \frac{y_{k_2, j_1}}{w_{n, i_1}}, \qquad  \forall (i, j) \in \mathcal{Q}.
$$
We now show the latter inequality.	
From the definition of $\mathcal{Q}$, we find that
$$
\frac{v_{0, i_0}}{v_{\beta_1, i_0}} <  \frac{x_{n, j_0}}{x_{m, j_0}} \frac{w_{n, i_1}}{y_{0, j_1}}\min_{M \leq n} \frac{y_{\beta_2(M), j_1}}{w_{M, i_1}}, \qquad  \forall (i, j) \in \mathcal{Q}.
$$			
Combining this with \eqref{Omega2}--\eqref{DN3} ,we obtain that for all $ (i, j) \in \mathcal{Q}$
\begin{eqnarray*}
\frac{v_{\beta_1, i_0}}{v_{k_1, i_0}}\min_{M \leq n} \frac{y_{\beta_2(M), j_1}}{w_{M, i_1}} &\leq&  C_1 \left (\frac{v_{0,i_0}}{v_{\beta_1,i_0}}\right)^{\frac{1-\theta}{\theta}}\min_{M \leq n} \frac{y_{\beta_2(M), j_1}}{w_{M, i_1}} \\
&<& C_1 \left ( \frac{x_{n, j_0}}{x_{m, j_0}} \right)^{\frac{1-\theta}{\theta}} \left ( \frac{w_{n, i_1}}{y_{0, j_1}}\ \right)^{\frac{1-\theta}{\theta}}\min_{M \leq n} \left ( \frac{y_{\beta_2(M), j_1}}{w_{M, i_1}}\right)^{\frac{1}{\theta}} \\
&\leq& C'\frac{x_{m, j_0}}{x_{k, j_0}}  \frac{y_{k_2, j_1}}{w_{n, i_1}}.
\end{eqnarray*}
\end{proof}

\section{A  Pe\l czy\'nski-Vogt decomposition result for $(\PLS)$-type power series spaces of infinite type}\label{sect-PV}

The goal of this section is to establish our first main result: a Pełczyński-Vogt decomposition theorem for the spaces $\Lambda_\infty(\alpha,\beta)$. Our approach adapts the method used in the proof of the decomposition result for power series spaces of infinite type in \cite[Satz 1.3]{V-IsomorphSatzPotRaum}, which relies on the splitting theory for such spaces (see Proposition \ref{p:SplittingWeightMatrices}(1)). We begin with the following key lemma.

	\begin{lemma}
		 \label{l:ExLambda=Lambda}
		Let $\Lambda$ be a $(\PLS)$-space satisfying the following conditions:
			\begin{itemize}
				\item[(C1)]  $\Lambda \times \Lambda \cong \Lambda$.
				\item[(C2)] There exists an exact sequence
					\[ \SES{\Lambda}{\Lambda}{\Lambda^{\N}}{}{}. \]
				\item[(C3)] $\ExtPLS^{1}(\Lambda, \Lambda) = 0$.
			\end{itemize}
		Let $X$ be a lcHs. If $X \compl \Lambda$, then $X \times  \Lambda \cong \Lambda$. 
	\end{lemma}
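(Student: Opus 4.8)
The plan is to run a Pełczyński-type decomposition argument, using the exact sequence (C2) together with the splitting hypothesis (C3) as substitutes for an isomorphism $\Lambda \cong \Lambda^{\N}$, which we do not have at our disposal.

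First I would record the structural consequences of $X \compl \Lambda$. Since $\Lambda$ is a $(\PLS)$-space and this class is closed under complemented (hence closed) subspaces, $X$ is itself a $(\PLS)$-space and there is a $(\PLS)$-space $Z$ with $\Lambda \cong X \times Z$. Reindexing countable products then gives $\Lambda^{\N} \cong (X \times Z)^{\N} \cong X^{\N} \times Z^{\N} \cong X \times \Lambda^{\N}$. Moreover $X \times \Lambda$ is complemented in $\Lambda \times \Lambda \cong \Lambda$ (using (C1)), so $X \times \Lambda \compl \Lambda$, and hence there is a $(\PLS)$-space $W$ with $\Lambda \cong (X \times \Lambda) \times W$.

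Next I would manufacture, out of (C2), an exact sequence having $X \times \Lambda$ as its middle term. Forming the product of (C2) with $\id_X$ yields the topologically exact sequence $0 \to \Lambda \to X \times \Lambda \to X \times \Lambda^{\N} \to 0$, whose quotient I rewrite as $\Lambda^{\N}$ by means of $X \times \Lambda^{\N} \cong \Lambda^{\N}$; call this sequence $(\star)$. Now (C2) and $(\star)$ are two extensions of $\Lambda^{\N}$ by $\Lambda$, and I would take their pullback $P$ over $\Lambda^{\N}$. In the quasi-abelian category of $(\PLS)$-spaces the pullback is again a $(\PLS)$-space and base change preserves strict epimorphisms, so $P$ fits into two topologically exact sequences: $0 \to \Lambda \to P \to \Lambda \to 0$ (projecting onto the middle term of (C2)) and $0 \to \Lambda \to P \to X \times \Lambda \to 0$ (projecting onto the middle term of $(\star)$). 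The first has both sub- and quotient object equal to $\Lambda$, so it splits by (C3), and then (C1) gives $P \cong \Lambda \times \Lambda \cong \Lambda$. Feeding this into the second sequence produces the key extension $(\dagger)$: $0 \to \Lambda \to \Lambda \to X \times \Lambda \to 0$.

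Finally I would close the argument with one further splitting. Taking the product of $(\dagger)$ with $\id_W$ and using $(X \times \Lambda) \times W \cong \Lambda$ yields the topologically exact sequence $0 \to \Lambda \to \Lambda \times W \to \Lambda \to 0$; again by (C3) it splits, so $\Lambda \times W \cong \Lambda \times \Lambda \cong \Lambda$. Combining this with $\Lambda \cong (X \times \Lambda) \times W$ gives $X \times \Lambda \cong X \times (\Lambda \times W) \cong (X \times \Lambda) \times W \cong \Lambda$, as desired. The main obstacle, and essentially the only place where genuine care is required, is the pullback step: one must verify that both sequences through $P$ are honest \emph{topologically} exact sequences of $(\PLS)$-spaces so that (C3) is applicable, which relies on the stability of strict epimorphisms under base change in the quasi-abelian setting and on the bookkeeping that identifies the kernels of the two projections from $P$ with $\Lambda$.
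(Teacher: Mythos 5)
Your proposal is correct and follows essentially the same route as the paper: both form the pullback of (C2) against the sequence $0 \to \Lambda \to X \times \Lambda \to \Lambda^{\N} \to 0$ obtained from $X \times \Lambda^{\N} \cong \Lambda^{\N}$, split the leg $0 \to \Lambda \to P \to \Lambda \to 0$ via (C3) and (C1) to get $P \cong \Lambda$, and then split the other leg to identify $P$ with $X \times \Lambda$. The only variation is cosmetic: in the final step the paper notes $X \times \Lambda \compl \Lambda$ and invokes the resulting vanishing $\ExtPLS^{1}(X \times \Lambda, \Lambda) = 0$ to split $0 \to \Lambda \to P \to X \times \Lambda \to 0$ directly, whereas you multiply that sequence by the complement $W$ and apply (C3) verbatim---which is exactly the standard proof of that hereditary vanishing, so the two arguments coincide.
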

	\begin{proof}
Since  $X \compl \Lambda$, $X$ is a $(\PLS)$-space and there is a $(\PLS)$-space $Y$ such that  $\Lambda \cong X \times Y$. Then,
		\begin{equation}
				\label{eq:ExLambda^N=Lambda^N}  \Lambda^{\N} \cong X^{\N} \times Y^{\N} \cong (X \times X^{\N}) \times Y^{\N} \cong X \times (X \times Y)^{\N}  \cong X \times \Lambda^{\N}. 
				\end{equation}
By (C2), there exists an exact sequence
			\begin{equation}
				\label{eq:C2} 
				\SES{\Lambda}{\Lambda}{\Lambda^{\N}}{\iota}{q}. 
			\end{equation}
		From here, we obtain the  exact sequence
			\[ \SES{\Lambda}{X \times \Lambda}{X \times \Lambda^{\N}}{\tilde{\iota}}{\tilde{q}}, \]
		where $\tilde{\iota}(\lambda) = (0, \lambda)$ and $\tilde{q}((x, \lambda)) = (x, q(\lambda))$ for $x \in X$, $\lambda \in \Lambda$. 
By combining this with \eqref{eq:ExLambda^N=Lambda^N},  we find an exact sequence of the form
			\begin{equation}
				\label{eq:SES1} 
				\SES{\Lambda}{X \times \Lambda}{\Lambda^{\N}}{\iota}{T}. 
			\end{equation}
			Define the $(\PLS)$-space
			$$
			Z = \{ (\lambda,y) \in \Lambda \times (X \times \Lambda) \mid q(\lambda) = T(y) \}.
			$$
We obtain the following commutative diagram of  exact sequences
			\[
				\begin{tikzcd}
					& & 0 & 0 & \\
					0 \arrow{r} & \Lambda \arrow{r} & \Lambda \arrow{r}{q} \arrow{u} & \Lambda^{\N} \arrow{r} \arrow{u} & 0 \\
					0 \arrow{r} & \Lambda \arrow{r}  & Z \arrow{r}{p_{2}} \arrow{u}{p_1} & X \times \Lambda \arrow{r} \arrow{u}{T} & 0 \\
					& & \Lambda \arrow{u} & \Lambda \arrow{u} & \\
					& & 0 \arrow{u} & 0 \arrow{u} & 
				\end{tikzcd}
			\]
		where $p_1(\lambda,y) = \lambda$ and $p_2(\lambda,y) = y$ for $\lambda \in \Lambda, y \in X \times \Lambda$. Conditions (C1) and (C3) give
			\[ Z \cong \Lambda \times \Lambda \cong \Lambda . \]
By (C1), it holds that $X \times \Lambda \compl \Lambda \times \Lambda \cong \Lambda$. Hence, from (C3) we obtain that also $\ExtPLS^1 (X \times \Lambda, \Lambda) = 0$. 
		Therefore, another application of (C1) yields
			\[ Z \cong (X \times \Lambda) \times \Lambda \cong X \times \Lambda .  \]
		We conclude $X \times \Lambda \cong Z \cong \Lambda$.
	\end{proof}

	\begin{proposition}\label{PVabstract}
		Let $\Lambda$ be a $(\PLS)$-space satisfying \emph{(C1)},  \emph{(C2)}, and  \emph{(C3)}. Let $X$ be a lcHs.	 
		If $X \compl \Lambda$ and $ \Lambda \compl X$, then $X \cong \Lambda$.
	\end{proposition}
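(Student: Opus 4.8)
The plan is to deduce Proposition \ref{PVabstract} from Lemma \ref{l:ExLambda=Lambda} by the standard Pe\l czy\'nski decomposition trick, now carried out in the category of $(\PLS)$-spaces. Since $X \compl \Lambda$, Lemma \ref{l:ExLambda=Lambda} applies and gives $X \times \Lambda \cong \Lambda$. The hypothesis is symmetric in spirit but $\Lambda$ is the privileged space, so I would exploit the other inclusion $\Lambda \compl X$ to produce a complement of $\Lambda$ inside $X$ and then ``swallow'' it using the self-absorption identity (C1).

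Concretely, first observe that $\Lambda \compl X$ means there is a (closed, complemented) decomposition $X \cong \Lambda \times W$ for some $(\PLS)$-space $W$; this uses the fact, recalled in the preliminaries, that the class of $(\PLS)$-spaces is closed under complemented subspaces and that a complemented subspace splits off a $(\PLS)$-space complement. Next I would feed $X \times \Lambda \cong \Lambda$ back in: writing $X \cong \Lambda \times W$ and combining with $X \times \Lambda \cong \Lambda$ gives
\[
    \Lambda \cong X \times \Lambda \cong (\Lambda \times W) \times \Lambda \cong (W \times \Lambda) \times \Lambda \cong W \times \Lambda.
\]
Here I used (C1) in the form $\Lambda \times \Lambda \cong \Lambda$ to collapse the two copies of $\Lambda$ into one. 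Thus $W \times \Lambda \cong \Lambda$ as well.

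Finally I would assemble the pieces:
\[
    X \cong \Lambda \times W \cong (\Lambda \times \Lambda) \times W \cong \Lambda \times (W \times \Lambda) \cong \Lambda \times \Lambda \cong \Lambda,
\]
where the first isomorphism is the decomposition of $X$, the second is (C1), the fourth uses $W \times \Lambda \cong \Lambda$ from the previous step, and the last is again (C1). This chain yields $X \cong \Lambda$, as desired. Each individual isomorphism is either a hypothesis, an instance of (C1), or the already-established consequence $X \times \Lambda \cong \Lambda$ of Lemma \ref{l:ExLambda=Lambda}; the associativity and commutativity of the product up to topological isomorphism are routine for $(\PLS)$-spaces.

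The only genuine subtlety, and the step I would watch most carefully, is the passage $\Lambda \compl X \Rightarrow X \cong \Lambda \times W$ with $W$ a $(\PLS)$-space: one must know that a complemented subspace of a $(\PLS)$-space admits a $(\PLS)$-space complement, which is exactly the structural fact recorded at the end of the $(\PLS)$-spaces subsection of the preliminaries. Everything else is formal bookkeeping with $\times$, and no splitting or $\Ext$ argument beyond what is already packaged inside Lemma \ref{l:ExLambda=Lambda} is needed here; the hard analytic content has been front-loaded into that lemma via (C2) and (C3).
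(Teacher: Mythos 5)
Your proof is correct, but it applies the key lemma to a different space than the paper does. The paper's proof is a three-line affair: from $\Lambda \compl X$ it writes $X \cong Y \times \Lambda$, observes that $Y \compl X$ and hence $Y \compl \Lambda$ by transitivity, and applies Lemma \ref{l:ExLambda=Lambda} to the \emph{complement} $Y$, obtaining $X \cong Y \times \Lambda \cong \Lambda$ in one stroke, with no further use of (C1) at the level of the proposition. You instead apply the lemma to $X$ itself to get $X \times \Lambda \cong \Lambda$, then absorb the complement $W$ by explicit bookkeeping with (C1): $\Lambda \cong X \times \Lambda \cong (\Lambda \times W) \times \Lambda \cong W \times \Lambda$, whence $X \cong \Lambda \times W \cong \Lambda$. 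Both are instances of the same Pe\l czy\'nski trick resting on the same lemma, so the difference is cosmetic rather than conceptual; note, though, that your closing five-term chain is redundant, since once $W \times \Lambda \cong \Lambda$ is in hand, $X \cong \Lambda \times W \cong W \times \Lambda \cong \Lambda$ finishes immediately. Also, the ``subtlety'' you flag --- that $W$ may be chosen a $(\PLS)$-space --- is real (it is the structural fact from the preliminaries) but is never actually used in your argument: no lemma is applied to $W$, so any topological complement, an arbitrary lcHs, would do; the same remark applies to the paper's $Y$, since Lemma \ref{l:ExLambda=Lambda} is stated for an arbitrary lcHs satisfying $Y \compl \Lambda$, from which the $(\PLS)$-property is derived internally.
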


	\begin{proof}
	Since  $X \compl \Lambda$, $X$ is a $(\PLS)$-space. As $\Lambda \compl X$, there is a $(\PLS)$-space $Y$ such that $X \cong Y \times \Lambda$. Then, $Y \compl X$ and, as $X \compl \Lambda$, also $Y \compl \Lambda$.
Lemma \ref{l:ExLambda=Lambda} implies that $X \cong Y \times \Lambda \cong \Lambda$.
	\end{proof}

We are ready to show our first main result.
	\begin{theorem}
		\label{t:PelczynskiPLS}
		Let $\alpha, \beta$ be exponent sequences satisfying \emph{(N)} with $\alpha$ stable.
		Let $X$ be a lcHs.	
		If $X \compl \Lambda_{\infty}(\alpha, \beta)$ and $\Lambda_{\infty}(\alpha, \beta) \compl X$, then $X \cong \Lambda_{\infty}(\alpha, \beta)$.
	\end{theorem}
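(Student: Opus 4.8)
The plan is to invoke Proposition~\ref{PVabstract} with $\Lambda = \Lambda_\infty(\alpha,\beta)$; it then suffices to check that $\Lambda_\infty(\alpha,\beta)$ satisfies conditions (C1), (C2) and (C3) of Lemma~\ref{l:ExLambda=Lambda}. Conditions (C1) and (C3) are quickly dealt with. For (C1) I would use the representation $\Lambda_\infty(\alpha,\beta) = \Lambda_\infty(\alpha)\,\varepsilon\,\Lambda'_\infty(\beta)$ (Lemma~\ref{lemma:tensor} and the duality for nuclear power series spaces): since $\alpha$ is stable, $\Lambda_\infty(\alpha)\cong\Lambda_\infty(\alpha)\times\Lambda_\infty(\alpha)$, and as the $\varepsilon$-product commutes with finite products in each variable this gives $\Lambda_\infty(\alpha,\beta)\cong\Lambda_\infty(\alpha,\beta)\times\Lambda_\infty(\alpha,\beta)$. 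For (C3) I would apply Corollary~\ref{c:SplittingResultPS} with $\gamma = \alpha$, $\delta = \beta$; since $\alpha,\beta$ satisfy (N) we have $\Lambda^1_\infty(\alpha,\beta) = \Lambda^\infty_\infty(\alpha,\beta) = \Lambda_\infty(\alpha,\beta)$, so the corollary reads $\ExtPLS^1(\Lambda_\infty(\alpha,\beta),\Lambda_\infty(\alpha,\beta)) = 0$.

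The real content is condition (C2): the existence of a topologically exact sequence $0\to\Lambda_\infty(\alpha,\beta)\to\Lambda_\infty(\alpha,\beta)\to\Lambda_\infty(\alpha,\beta)^{\N}\to0$. My plan is to obtain it by applying the functor $(\cdot)\,\varepsilon\,\Lambda'_\infty(\beta) = L(\Lambda_\infty(\beta),\cdot)$ to the analogous sequence on the $\Lambda_\infty(\alpha)$-side. Writing $\Lambda_\infty(\alpha,\beta) = L(\Lambda_\infty(\beta),\Lambda_\infty(\alpha))$ (using that $\Lambda'_\infty(\beta)$ is Montel) and using $L(\Lambda_\infty(\beta),E)^{\N} = L(\Lambda_\infty(\beta),E^{\N})$, one identifies $\Lambda_\infty(\alpha,\beta)^{\N}\cong\Lambda_\infty(\alpha)^{\N}\,\varepsilon\,\Lambda'_\infty(\beta)$. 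It therefore suffices to start from a topologically exact sequence of nuclear Fr\'echet spaces $0\to\Lambda_\infty(\alpha)\to\Lambda_\infty(\alpha)\xrightarrow{q}\Lambda_\infty(\alpha)^{\N}\to0$; this is the Fr\'echet instance of (C2) and exists for stable $\alpha$ by the construction underlying Vogt's decomposition theorem for power series spaces of infinite type. Applying $L(\Lambda_\infty(\beta),\cdot)$, left-exactness of the $\varepsilon$-product yields the embedding and exactness in the middle, while surjectivity of $q\,\varepsilon\,\id$ follows from $\Ext^1(\Lambda_\infty(\beta),\Lambda_\infty(\alpha)) = 0$---which holds by Proposition~\ref{p:SplittingWeightMatrices}(1), since $v^\infty_\beta$ satisfies $(\DN)$ and $v^\infty_\alpha$ satisfies $(\Omega)$---via Lemma~\ref{charEXT}(1) applied to the Fr\'echet sequence. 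To upgrade algebraic exactness to topological exactness of the resulting sequence of $(\PLS)$-spaces, I would realize it as the projective limit of a ladder of exact sequences of $(\LS)$-spaces and appeal to the derived-projective-limit formalism (Proposition~\ref{Proj1fprop}), the required vanishing of $\Proj^1$ of the kernel spectrum being precisely the surjectivity established above.

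I expect condition (C2) to be the main obstacle, for two reasons. First, the Fr\'echet sequence $0\to\Lambda_\infty(\alpha)\to\Lambda_\infty(\alpha)\to\Lambda_\infty(\alpha)^{\N}\to0$ is genuinely nontrivial: it cannot split, because $\Lambda_\infty(\alpha)^{\N}$ carries no continuous norm whereas $\Lambda_\infty(\alpha)$ does, so the surjection onto $\Lambda_\infty(\alpha)^{\N}$ is far from a coordinate projection and its construction relies essentially on the stability of $\alpha$. Second, the passage from algebraic to topological exactness after the $\varepsilon$-product requires care, since the open mapping theorem is not available for arbitrary $(\PLS)$-spaces; openness has to be extracted from the projective-spectrum structure and the vanishing of the relevant $\Proj^1$, rather than from a general principle.
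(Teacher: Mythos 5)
Your overall strategy coincides with the paper's: invoke Proposition~\ref{PVabstract} and verify (C1)--(C3), with (C1) via Lemma~\ref{lemma:tensor} and stability of $\alpha$, (C3) via Corollary~\ref{c:SplittingResultPS}, and (C2) by applying $\cdot\,\varepsilon\,\Lambda'_\infty(\beta)=L(\Lambda_\infty(\beta),\cdot)$ to the Vogt--Wagner sequence $0\to\Lambda_\infty(\alpha)\to\Lambda_\infty(\alpha)\to\Lambda_\infty(\alpha)^{\N}\to 0$, with surjectivity of $q_*$ coming from $\Ext^1(\Lambda_\infty(\beta),\Lambda_\infty(\alpha))=0$ (Proposition~\ref{p:SplittingWeightMatrices}(1)) via Lemma~\ref{charEXT}(1). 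Up to that point your argument is the paper's argument. The genuine gap is in your final step, the upgrade from algebraic to topological exactness. Proposition~\ref{Proj1fprop} only asserts exactness \emph{as a sequence of vector spaces}; it says nothing about openness of the quotient map or about the embedding being topological, so it cannot deliver what you need. Worse, your plan presupposes a ladder of topologically exact sequences of $(\LS)$-spaces whose projective limit is the tensored sequence, but no such ladder is available a priori: the natural way to produce one (Wengenroth's resolution of an exact sequence of $(\PLS)$-spaces, as used in Proposition~\ref{p:Proj^1=0=>Ext^1=0}) takes topological exactness as its \emph{hypothesis}, so the argument would be circular. Finally, the identification of ``$\Proj^1$ of the kernel spectrum $=0$'' with ``the surjectivity established above'' conflates two different statements: the surjectivity of $q_*$ on the limit level was obtained from $\Ext^1=0$, whereas the relevant $\Proj^1$ condition concerns lifting along the spectrum $\mathscr{L}(\Lambda_\infty(\beta),\mathscr{X})$ and is not what Lemma~\ref{charEXT}(1) gives you.

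The paper closes this gap differently, and your parenthetical dismissal of the open mapping theorem is precisely where you go astray: De Wilde's open mapping theorem \emph{does} apply here, because every $(\PLS)$-space has a strict ordered web; what must be checked is that the \emph{target} is ultrabornological. The paper shows that any algebraically exact sequence of ultrabornological $(\PLS)$-spaces is automatically topologically exact: De Wilde gives openness of $Q$ (the countable product $\Lambda_\infty(\alpha,\beta)^{\N}$ being ultrabornological), and the embedding is handled by the fact that a closed subspace $A$ of an ultrabornological $(\PLS)$-space $X$ is ultrabornological if and only if $X/A$ is complete (Doma\'nski--Vogt), after which De Wilde applies again to $\Lambda\to\ker Q$. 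It then verifies that $\Lambda_\infty(\alpha,\beta)$ itself is ultrabornological, using once more $\Ext^1(\Lambda_\infty(\beta),\Lambda_\infty(\alpha))=0$ together with Lemma~\ref{lemma:tensor} and Wengenroth's results on $\Proj^1$ and ultrabornologicality. So the missing ingredient in your proposal is the ultrabornologicality of $\Lambda_\infty(\alpha,\beta)$ and its use through De Wilde's theorem; without it, your $\Proj^1$-based route does not yield openness.
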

	
	\begin{proof}[Proof of Theorem \ref{t:PelczynskiPLS}]
In view of Proposition \ref{PVabstract}, it suffices to show that the $(\PLS)$-space $\Lambda_{\infty}(\alpha, \beta)$ satisfies the conditions (C1), (C2), and (C3). \\
(C1): As $\alpha$ is stable, it holds that $\Lambda_{\infty}(\alpha) \cong \Lambda_{\infty}(\alpha) \times \Lambda_{\infty}(\alpha)$. By Lemma \ref{lemma:tensor}, it holds that
			\begin{align*}
			 \Lambda_{\infty}(\alpha, \beta) &\cong \Lambda_{\infty}(\alpha) \varepsilon  \Lambda'_{\infty}(\beta) \cong (\Lambda_{\infty}(\alpha) \times \Lambda_{\infty}(\alpha)) \varepsilon  \Lambda'_{\infty}(\beta) \\
			 &\cong (\Lambda_{\infty}(\alpha) \varepsilon  \Lambda'_{\infty}(\beta) ) \times (\Lambda_{\infty}(\alpha) \varepsilon  \Lambda'_{\infty}(\beta))  = \Lambda_{\infty}(\alpha, \beta)  \times \Lambda_{\infty}(\alpha, \beta) 
			\end{align*}
(C2): As $\alpha$ is stable, \cite[Satz 2.4]{V-W-CharUnterraumQuotientraumNuklStabPotRaumInfty}  implies that there exists an exact  sequence
$$
\SES{\Lambda_\infty(\alpha)}{\Lambda_\infty(\alpha)}{\Lambda_\infty(\alpha)^{\N}}{}{}.
$$
Since $\Ext^1(\Lambda_\infty(\beta), \Lambda_\infty(\alpha)) = 0$ (Proposition  \ref{p:SplittingWeightMatrices}(1)), we obtain from Lemma \ref{charEXT} (see also \cite[Theorem 6.3]{D-N-ExtResLBSpSurjTensMap}) that the sequence 
$$
\SES{\Lambda_\infty(\alpha)\varepsilon \Lambda'_\infty(\beta) }{\Lambda_\infty(\alpha) \varepsilon \Lambda'_\infty(\beta)}{\Lambda_\infty(\alpha)^{\N}  \varepsilon \Lambda'_\infty(\beta)}{}{}
$$
is algebraically exact.  By using Lemma \ref{lemma:tensor} and $\Lambda_\infty(\alpha)^{\N} \varepsilon \Lambda'_\infty(\beta) \cong (\Lambda_\infty(\alpha) \varepsilon \Lambda'_\infty(\beta))^{\N}$, we find an algebraically exact sequence
$$
\SES{\Lambda_\infty(\alpha,\beta)}{\Lambda_\infty(\alpha,\beta)}{\Lambda_\infty(\alpha, \beta)^{\N}}{}{}.
$$
We now show that this sequence is also topologically exact. We first show that an algebraically exact sequence of ultrabornological $(\PLS)$-spaces is automatically topologically exact. Since every $(\PLS)$-space has a strict ordered web, De Wilde's open mapping theorem \cite[Theorem 24.30]{M-V-IntroFunctAnal} implies that any surjective continuous linear mapping $X \to Y$, with $X$ and $Y$ ($\PLS$)-spaces and $Y$ ultrabornological,  is a topological homomorphism. The conclusion now follows from the fact that a closed subspace $A$
of an ultrabornological $(\PLS)$-space $X$ is ultrabornological if and only if $X/A$ is complete \cite[Corollary 1.4]{D-V-SplitThSpDist}. Hence, it suffices to show that $\Lambda_\infty(\alpha,\beta)$ is ultrabornological (note that the countable product of ultrabornological spaces is again ultrabornological). As $\Ext^1(\Lambda_\infty(\beta), \Lambda_\infty(\alpha)) = 0$ (Proposition  \ref{p:SplittingWeightMatrices}(1)), this follows from Lemma \ref{lemma:tensor} and  \cite[Theorem 3.3.4 and Proposition 5.1.5]{W-DerivFuncFunctAnal}. \\
(C3): This follows from Corollary \ref{c:SplittingResultPS}.
\end{proof}

We end this section with a remark on the analogue of Theorem \ref{t:PelczynskiPLS} in the finite type case. Consider the following condition on an exponent sequence $\alpha$:
\begin{equation}
\lim_{i \to \infty} \frac{\log(1+i)}{\alpha_i} = 0.
\label{nucls}
\end{equation}
This condition is equivalent to $\Lambda^p_0(\alpha)$, $p \in \{1, \infty\}$, being nuclear \cite[Proposition 29.6(2)]{M-V-IntroFunctAnal}. If  $\alpha$ and $\beta$ are exponent  sequences satisfying \eqref{nucls}, then  $\Lambda^\infty_0(\alpha) =\Lambda^1_0(\alpha)$ and  $\Lambda^\infty_0(\alpha, \beta) =\Lambda^1_0(\alpha, \beta)$, and we simply write $\Lambda_0(\alpha)$ and   $\Lambda_0(\alpha,\beta)$, respectively, for these spaces.
\begin{remark}\label{openqabst}
As  mentioned in the introduction, the following decomposition result for power series of finite type is known \cite[Satz 1.4]{V-IsomorphSatzPotRaum}:  
\emph{Let $\alpha$ be an exponent  sequence satisfying \eqref{nucls} and let $X$ be a lcHs.	If $X \compl \Lambda_{0}(\alpha)$ and $\Lambda_{0}(\alpha) \compl X$, then $X \cong \Lambda_{0}(\alpha)$}.

It seems natural to expect that the analogue of Theorem \ref{t:PelczynskiPLS} also holds in the finite type case,  but we do not know how to prove this. More precisely, there is the following open problem: 
\emph{Let $\alpha$  and $\beta$  be exponent sequences satisfying \eqref{nucls} and let $X$ be a lcHs. If  $X \compl \Lambda_{0}(\alpha,\beta)$ and $\Lambda_{0}(\alpha,\beta) \compl X$, is it true that $X \cong \Lambda_{0}(\alpha, \beta)$? } 
 
 Note that, under the assumption $\sup_{i \in \N} \alpha_{i+1}/\alpha_i < \infty$,  this does not follow from Proposition \ref{PVabstract} because  $\ExtPLS^{1}(\Lambda_{0}(\alpha, \beta), \Lambda_{0}(\alpha, \beta)) \neq  0$ (otherwise,  $\Lambda_{0}(\alpha) \compl \Lambda_{0}(\alpha, \beta)$ would imply that   $\Ext^{1}(\Lambda_{0}(\alpha), \Lambda_{0}(\alpha))
 = 0$, which is known to be false \cite[Corollary 4.4]{V-Ext1Frechet}). It would be interesting to investigate whether the method used to prove the above decomposition result for power series of finite type can be extended to the setting of $(\PLS)$-spaces.
 \end{remark}

\section{Sequence space representations of multiplier spaces of Gelfand-Shilov spaces of Beurling type}\label{sect-appl}
In this section, we obtain sequence space representations of multiplier spaces of Gelfand-Shilov spaces of Beurling type. Our proof is based on the  Pe\l czy\'nski-Vogt decomposition result for the spaces $\Lambda_\infty(\alpha,\beta)$ (Theorem \ref{t:PelczynskiPLS}) and properties of Gabor frames \cite{G-FoundTFAnalysis}.

\subsection{Gelfand-Shilov spaces}

	A \emph{weight function} $\omega$  is a non-decreasing continuous function $\omega : [0, \infty) \to [0, \infty)$ satisfying the following properties:
		\begin{itemize}
			\item[$(\alpha)$] $\omega(2 t) = O(\omega(t))$ as $t \to \infty$;
			\item[$(\gamma)$] $\log t = o(\omega(t))$ as $t \to \infty$.
		\end{itemize}
We say that  $\omega$ is a \emph{BMT-weight function} \cite{B-M-T-UltradiffFuncFourierAnal} if it is a weight function that, in addition,  satisfies $\omega_{\mid [0,1]} = 0$ and
		\begin{itemize}
			\item[$(\delta)$] $\phi : [0, \infty) \to [0, \infty)$, $\phi(x) = \omega(e^x)$, is convex.
		\end{itemize}

Let $\omega$ be a BMT-weight function.	
The \emph{Young conjugate} of $\phi$ is defined as
		\[\phi^* : [0, \infty) \to [0, \infty), \quad \phi^*(y) = \sup_{x \geq 0} \{x y - \phi(x)\}. \]
	The function $\phi^*$ is convex and increasing, $(\phi^*)^* = \phi$, and the function $y \mapsto \phi^*(y) / y$ is increasing on $[0, \infty)$ and tends to infinity as $y \to \infty$.
	\begin{example}\label{GevreyW}
The \emph{Gevrey weight of order $\mu$}, $\mu >0$, is defined as 
$$
\omega_\mu(t)  = \max \{ 0, t^{1/\mu} -1\}, \qquad t \geq 0.
$$
\end{example}

Let $\omega$  be a BMT-weight function and let $\eta$ be a weight function. For $h >0$ and $\lambda \in \R$ we  write $\mathcal{S}_{\eta,\lambda}^{\omega,h}$ for the Banach space consisting of all $\varphi \in C^\infty(\R)$ such that
$$
\sup_{p \in \N} \sup_{x \in \R} |\varphi^{(p)}(x)| \exp \left( \frac{1}{\lambda} \eta(x) -  \frac{1}{h} \phi^{*}(h p) ) \right) < \infty.
$$
We define the \emph{Gelfand-Shilov spaces (of Beurling and Roumieu type)} as
\[
\mathcal{S}_{(\eta)}^{(\omega)}=\varprojlim_{h \to0^+} \mathcal{S}_{\eta,h}^{\omega,h} \qquad \mbox{and}
\qquad \mathcal{S}_{\{\eta\}}^{\{\omega\}}=\varinjlim_{h\to +\infty} \mathcal{S}_{\eta,h}^{\omega,h}.
\]
We use $\mathcal{S}_{[\eta]}^{[\omega]}$ as a common notation for $\mathcal{S}_{(\eta)}^{(\omega)}$ and $\mathcal{S}_{\{\eta\}}^{\{\omega\}}$; a similar convention is used for other spaces as well. 
 \begin{example}\label{CGS}
Let $\mu, \tau >0$. For $h>0$ and $\lambda \in \R$ we define  $\mathcal{S}_{\tau,\lambda}^{\mu,h}$ as the Banach space consisting of all $\varphi \in C^\infty(\R)$ such that
$$
 \sup_{p \in \N} \sup_{x \in \R} \frac{|\varphi^{(p)}(x)|e^{\frac{1}{\lambda}|x|^{1/\tau}}}{h^{p}p!^\mu} < \infty.
$$
We set 
\[
\Sigma^{\mu}_{\tau}=\varprojlim_{h \to0^+} \mathcal{S}_{\tau,h}^{\mu,h} \qquad \mbox{and}
\qquad \mathcal{S}^{\mu}_{\tau}=\varinjlim_{h\to +\infty} \mathcal{S}_{\tau,h}^{\mu,h}.
\]
We already considered the space   $\Sigma^{\mu}_{\tau}$ in the introduction.  The spaces  $\mathcal{S}^{\mu}_\tau$ were  introduced by Gelfand and Shilov \cite{G-S-GenFunc2}. Using the same notation as in Example \ref{GevreyW}, it holds that $\Sigma^\mu_\tau = \mathcal{S}^{(\omega_\mu)}_{(\omega_\tau)}$ and $\mathcal{S}^\mu_\tau = \mathcal{S}^{\{\omega_\mu\}}_{\{\omega_\tau\}}$.
\end{example}

\begin{remark}
It is an open problem to characterize the non-triviality of the spaces $\mathcal{S}^{[\omega]}_{[\eta]}$  in terms of $\omega$ and $\eta$. It is known that $\Sigma^{\mu}_{\tau}$ is non-trivial if and only if $ \mu + \tau >1$, while $\mathcal{S}^{\mu}_{\tau}$ is non-trivial if and only $\mu + \tau \geq 1$ (cf.\  \cite[Section 8]{G-S-GenFunc2}).
\end{remark}
\subsection{Multiplier spaces of $\mathcal{S}^{[\omega]}_{[\eta]}$} 
Let $\omega$  be a BMT-weight function and let $\eta$ be a weight function. We define 
\[
\mathcal{Z}_{(\eta)}^{(\omega)}=\varprojlim_{h \to0^+} \varinjlim_{\lambda \to0^-}  \mathcal{S}_{\eta,\lambda}^{\omega,h} \qquad \mbox{and}
\qquad \mathcal{Z}_{\{\eta\}}^{\{\omega\}}=\varprojlim_{\lambda\to -\infty}\varinjlim_{h\to +\infty} \mathcal{S}_{\eta,\lambda}^{\omega,h}.
\]
We refer to our previous articles \cite{D-N-WeighPLBUltradiffFuncMultSp, D-N-BarrelWeighPLBUltradiffFunc} for a detailed study of the linear topological and structural properties of the spaces $\mathcal{Z}_{(\eta)}^{(\omega)}$ and  $\mathcal{Z}_{\{\eta\}}^{\{\omega\}}$. 

\begin{example}\label{CGS2} 
Let $\mu, \tau >0$. 
We set 
\[
\mathcal{Z}_{(\tau)}^{(\mu)}=\varprojlim_{h \to0^+} \varinjlim_{\lambda \to 0^-} \mathcal{S}_{\tau,h}^{\mu,\lambda} \qquad \mbox{and}
\qquad \mathcal{Z}_{\{\tau\}}^{\{\mu\}}=\varprojlim_{\lambda \to -\infty} \varinjlim_{h \to +\infty} \mathcal{S}_{\tau,h}^{\mu,\lambda}.
\]
We already considered the space  $\mathcal{Z}_{(\tau)}^{(\mu)}$ in the introduction.  Using the same notation as in Example \ref{GevreyW}, it holds that $\mathcal{Z}_{(\tau)}^{(\mu)}= \mathcal{Z}^{(\omega_\mu)}_{(\omega_\tau)}$ and $\mathcal{Z}_{\{\tau\}}^{\{\mu\}} = \mathcal{Z}^{\{\omega_\mu\}}_{\{\omega_\tau\}}$.
\end{example}

The main motivation to consider the spaces $\mathcal{Z}_{(\eta)}^{(\omega)}$ and  $\mathcal{Z}_{\{\eta\}}^{\{\omega\}}$ stems from the fact that, as shown in \cite{D-N-WeighPLBUltradiffFuncMultSp}, they are the multiplier spaces of the Gelfand-Shilov spaces $\mathcal{S}_{(\eta)}^{(\omega)}$ and  $\mathcal{S}_{\{\eta\}}^{\{\omega\}}$, respectively. We now explain this in more detail.

 We write $\mathcal{S}^{\prime [\omega]}_{[\eta]}$ for the dual of $\mathcal{S}^{[\omega]}_{[\eta]}$. We may consider $\mathcal{Z}^{[\omega]}_{[\eta]}$ as a vector subspace of $\mathcal{S}^{\prime [\omega]}_{[\eta]}$ \cite[Lemma 5.2]{D-N-WeighPLBUltradiffFuncMultSp}. 
 The multiplication mapping 
 	\[  \mathcal{S}_{[\eta]}^{[\omega]} \times \mathcal{S}_{[\eta]}^{[\omega]} \to \mathcal{S}_{[\eta]}^{[\omega]}, \qquad (\varphi,\psi) \mapsto \varphi \cdot \psi , \] 
is a well-defined, separately continuous bilinear mapping. Hence, given $\varphi \in \mathcal{S}^{[\omega]}_{[\eta]}$ and $f \in \mathcal{S}^{\prime [\omega]}_{[\eta]}$, we can define $\varphi \cdot f \in \mathcal{S}^{\prime [\omega]}_{[\eta]}$ via duality, i.e., $\langle \varphi \cdot f, \psi \rangle = \langle f, \varphi \cdot \psi \rangle$ for all $\psi \in \mathcal{S}^{[\omega]}_{[\eta]}$.  Moreover, the mapping
\begin{equation}
\label{contprod}
\mathcal{S}^{[\omega]}_{[\eta]} \to \mathcal{S}^{ \prime [\omega]}_{[\eta]}, \quad \varphi \mapsto \varphi \cdot f
\end{equation}
is continuous. We define the \emph{multiplier space of $\mathcal{S}^{[\omega]}_{[\eta]}$} as
		\[ \mathcal{O}_M(\mathcal{S}^{[\omega]}_{[\eta]}) = \left\{ f \in \mathcal{S}^{\prime [\omega]}_{[\eta]} \mid \varphi \cdot f \in \mathcal{S}^{[\omega]}_{[\eta]} \text{ for all } \varphi \in \mathcal{S}^{[\omega]}_{[\eta]} \right\}. \]
De Wilde's closed graph theorem and the continuity of the mapping \eqref{contprod} imply that, for all $f \in  \mathcal{O}_M(\mathcal{S}^{[\omega]}_{[\eta]})$, the mapping
$$
\mathcal{S}^{[\omega]}_{[\eta]} \to \mathcal{S}^{[\omega]}_{[\eta]}, \quad \varphi \mapsto \varphi \cdot f,
$$
is continuous. We endow $\mathcal{O}_M(\mathcal{S}^{[\omega]}_{[\eta]})$ with the topology induced by the embedding
$$
\mathcal{O}_M(\mathcal{S}^{[\omega]}_{[\eta]}) \to L(\mathcal{S}^{[\omega]}_{[\eta]}, \mathcal{S}^{[\omega]}_{[\eta]}), \quad f \mapsto (\varphi \mapsto \varphi \cdot f). 
$$
If $\mathcal{S}^{[\omega]}_{[\eta]} \neq \{0\}$, then $\mathcal{Z}^{[\omega]}_{[\eta]} =  \mathcal{O}_M(\mathcal{S}^{[\omega]}_{[\eta]})$ as lcHs \cite[Theorem 5.4]{D-N-WeighPLBUltradiffFuncMultSp}.
\subsection{Gabor frames} The translation and modulation operators are denoted by $T_{x} f(t) = f(t - x)$ and $M_{\xi} f(t) = e^{2 \pi i \xi t} f(t)$, respectively, for $x, \xi \in \R$. Let  $\psi \in L^{2}(\R)$ and $a, b > 0$. The set of time-frequency shifts
	\[ \mathcal{G}(\psi, a, b) = \{ T_{ak} M_{bl} \psi \mid (k,l) \in \Z^2 \} \]
is called a \emph{Gabor frame} for $L^{2}(\R)$ if there exist $A, B > 0$ such that
	\[ A \|f\|^{2}_{L^{2}} \leq \sum_{ (k,l) \in \Z^2} \left| (f, T_{ak}M_{bl} \psi)_{L^2}\right|^{2} \leq B \|f \|^{2}_{L^{2}} , \qquad \forall f  \in L^{2}(\R). \]
Let $\psi \in \mathcal{S}$. The \emph{analysis operator}
$$
C_\psi = C^{a,b}_\psi:  L^2(\R) \rightarrow \ell^2(\Z^2), ~  f \mapsto ((f, T_{ak}M_{bl} \psi)_{L^2})_{(k,l) \in \Z^2},
$$
and the \emph{synthesis operator}
$$
D_\psi = D^{a,b}_\psi: \ell^2(\Z^2) \rightarrow L^2(\R), ~  (c_{k,l})_{ (k,l) \in \Z^2} \mapsto \sum_{ (k,l) \in \Z^2} c_{k,l} T_{ak} M_{bl} \psi
$$
are continuous. 
Let $\gamma \in \mathcal{S}$. We call  \emph{$(\psi,\gamma)$ a pair of dual windows  (on $a\Z \times b\Z$)} if
\begin{equation}
D_\gamma \circ C_\psi = \operatorname{id}_{L^2(\R)}.
		\label{comp11}
\end{equation}
In such a case, $(\gamma, \psi)$ is also a pair of dual windows and both $\mathcal{G}(\psi, a, b)$ and $\mathcal{G}(\gamma, a, b)$ are Gabor frames. 

We will use the following characterization of pairs of dual windows,  known as the \emph{Wexler-Raz biorthogonality relations}:
\begin{theorem}\cite[Theorem 7.3.1 and the subsequent remark]{G-FoundTFAnalysis}
		\label{l:WZBiOrthRel}
		Let $\psi, \gamma \in  \mathcal{S}$ and let $a,b > 0$. Then,  $(\psi,\gamma)$  is a pair of dual windows on $a\Z \times b\Z$ if and only if
		$$
		\frac{1}{ab}(  T_{\frac{k}{b}}M_{\frac{l}{a}}\psi, T_{\frac{k'}{b}} M_{\frac{l'}{a}} \gamma)_{L^{2}} = \delta_{k,k'} \delta_{l,l'}, \qquad \forall (k, l), (k,' l') \in \Z^2,
$$
or, equivalently,
\begin{equation}
\frac{1}{ab}C^{\frac{1}{b}, \frac{1}{a}}_\psi \circ D^{\frac{1}{b},\frac{1}{a}}_\gamma =  \operatorname{id}_{\ell^2\left( \Z^2 \right)}.
		\label{WR}
\end{equation}
\end{theorem}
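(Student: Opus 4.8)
The statement bundles three mutually equivalent assertions: the dual-window condition $D^{a,b}_\gamma \circ C^{a,b}_\psi = \operatorname{id}_{L^2(\R)}$; the Wexler--Raz biorthogonality of the inner products $(T_{\frac{k}{b}}M_{\frac{l}{a}}\psi, T_{\frac{k'}{b}}M_{\frac{l'}{a}}\gamma)_{L^2}$; and the operator identity \eqref{WR} on $\ell^2(\Z^2)$. The plan is to dispatch the equivalence of the last two by a formal computation and to locate the real content in the equivalence of either of them with the dual-window condition. That the two \emph{displayed} conditions coincide is immediate from the definitions of the analysis and synthesis operators: writing out $C^{\frac1b,\frac1a}_\psi \circ D^{\frac1b,\frac1a}_\gamma$ in coordinates, the $(k,l)$-component of the image of the standard basis vector $e_{k',l'}$ is exactly $(T_{\frac{k'}{b}}M_{\frac{l'}{a}}\gamma, T_{\frac{k}{b}}M_{\frac{l}{a}}\psi)_{L^2}$, so \eqref{WR} says precisely that this matrix, scaled by $1/ab$, is the identity, which is the biorthogonality relation (the inner product being Hermitian and $\delta_{k,k'}\delta_{l,l'}$ real). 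Since $\psi,\gamma \in \mathcal{S}$, all these numbers are well defined and decay rapidly in $(k-k', l-l')$, so no convergence question arises.

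For the substantive equivalence, first I would reduce the biorthogonality relation to its diagonal case $k'=l'=0$: by the covariance of the $L^2$ inner product under simultaneous time--frequency translation, the quantity $(T_{\frac{k}{b}}M_{\frac{l}{a}}\psi, T_{\frac{k'}{b}}M_{\frac{l'}{a}}\gamma)_{L^2}$ depends on $(k,l,k',l')$ only through $(k-k', l-l')$ up to a unimodular phase, so the full relation is equivalent to $(\gamma, T_{\frac{m}{b}}M_{\frac{n}{a}}\psi)_{L^2} = ab\, \delta_{m,0}\delta_{n,0}$. Then I would invoke the \emph{fundamental identity of Gabor analysis}: for $f,g \in \mathcal{S}$ there are phase-adjusted adjoint-lattice time--frequency shifts $\sigma(m,n)$ with $\sigma(0,0) = \operatorname{id}$ such that
\[
(D^{a,b}_\gamma C^{a,b}_\psi f, g)_{L^2} = \sum_{(k,l)\in\Z^2} (f, T_{ak}M_{bl}\psi)_{L^2}(T_{ak}M_{bl}\gamma, g)_{L^2} = \frac{1}{ab}\sum_{(m,n)\in\Z^2}(\gamma, T_{\frac{m}{b}}M_{\frac{n}{a}}\psi)_{L^2}\,(f, \sigma(m,n)g)_{L^2}.
\]
This identity is a two-dimensional Poisson summation: after summing the modulation phases over $l$ (producing a comb on $\frac1b\Z$) and expanding the resulting $a$-periodic sum over $k$ in a Fourier series on $\frac1a\Z$, one obtains the adjoint-lattice sum. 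The Schwartz hypothesis on $\psi,\gamma,f,g$ guarantees absolute convergence of both sides and legitimizes every interchange. Reading off the identity, the $(0,0)$ term contributes $\frac{1}{ab}(\gamma,\psi)_{L^2}(f,g)_{L^2}$, so $D_\gamma C_\psi = \operatorname{id}_{L^2}$ --- that is, $(D_\gamma C_\psi f, g)_{L^2} = (f,g)_{L^2}$ for all $f,g$ --- forces the right-hand side to collapse to this single term with coefficient $1$, which is exactly the diagonal biorthogonality; the converse is the immediate collapse of the sum.

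The hard part is this last extraction: passing from the \emph{weak} identity ``$(D_\gamma C_\psi f,g)_{L^2}=(f,g)_{L^2}$ for all $f,g$'' to the \emph{pointwise} vanishing of the individual coefficients $(\gamma, T_{\frac{m}{b}}M_{\frac{n}{a}}\psi)_{L^2}$. The clean way is to regard the right-hand side of the displayed identity as the sesquilinear form of a bounded operator $A$ on $L^2(\R)$ which is a superposition of the distinct (unitary) time--frequency shifts $\sigma(m,n)$, with the coefficient of $\sigma(0,0)=\operatorname{id}$ equal to $\frac1{ab}(\gamma,\psi)_{L^2}$ and the coefficient attached to $\sigma(m,n)$ a fixed nonzero multiple of $(\gamma, T_{\frac{m}{b}}M_{\frac{n}{a}}\psi)_{L^2}$; the series converges in operator norm because $\psi,\gamma \in \mathcal{S}$ makes these coefficients decay rapidly. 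Such a superposition equals $\operatorname{id}_{L^2}$ if and only if the coefficient of the identity is $1$ and all the others vanish, by the linear independence of distinct time--frequency shifts (seen, e.g., by testing against a Gaussian and comparing short-time Fourier transforms, which separates the shifts). This yields the biorthogonality and completes the equivalence. Alternatively, and as the present paper does, one simply cites \cite[Theorem 7.3.1 and the subsequent remark]{G-FoundTFAnalysis}, where the result is obtained within the general Gabor-frame machinery.
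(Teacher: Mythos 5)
The paper offers no proof of Theorem \ref{l:WZBiOrthRel} at all: it is imported verbatim, with a citation, from Gr\"ochenig's book, so the only meaningful comparison is with the proof in the cited source --- and your reconstruction follows precisely that route. Your coordinate computation correctly identifies \eqref{WR} with the entrywise biorthogonality (the $(k,l)$-entry of $C^{1/b,1/a}_\psi D^{1/b,1/a}_\gamma e_{k',l'}$ is indeed $(T_{\frac{k'}{b}}M_{\frac{l'}{a}}\gamma, T_{\frac{k}{b}}M_{\frac{l}{a}}\psi)_{L^2}$, and conjugate symmetry is harmless since the Kronecker deltas are real; the boundedness of the two operators for Schwartz windows lets you pass from basis vectors to the operator identity on $\ell^2(\Z^2)$). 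The reduction to the diagonal case by covariance, the fundamental identity of Gabor analysis/Janssen representation via Poisson summation (legitimate here because $\psi,\gamma\in\mathcal{S}$ makes the coefficients $(\gamma, T_{\frac{m}{b}}M_{\frac{n}{a}}\psi)_{L^2}$ decay rapidly, i.e.\ Condition (A) holds), and the collapse argument in both directions are all exactly the machinery behind the cited theorem.

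The one place where your sketch is thinner than a proof is the final linear-independence step. ``Testing against a Gaussian and comparing short-time Fourier transforms'' does not by itself separate the terms of $\sum_{m,n} c_{m,n}\,\sigma(m,n)=0$: each term carries a $z$-dependent unimodular phase and the Gaussian bumps centered at distinct adjoint-lattice points overlap, so a pointwise comparison only bounds $|c_{m_0,n_0}|$ by $e^{-\pi d^2/2}\sum_{(m,n)\neq(m_0,n_0)}|c_{m,n}|$, which is no contradiction. The standard repair is to conjugate by \emph{arbitrary} time-frequency shifts: since $(T_xM_\xi)^{*}\, T_{\frac{m}{b}}M_{\frac{n}{a}}\, T_xM_\xi = e^{2\pi i (\frac{n}{a}x - \frac{m}{b}\xi)}\, T_{\frac{m}{b}}M_{\frac{n}{a}}$, the function
\begin{equation*}
h(x,\xi) = \left( A\, T_xM_\xi\varphi,\, T_xM_\xi\varphi \right)_{L^2} = \sum_{(m,n)\in\Z^2} c_{m,n}\, e^{2\pi i (\frac{n}{a}x - \frac{m}{b}\xi)}\left( \sigma(m,n)\varphi, \varphi \right)_{L^2}
\end{equation*}
is an absolutely convergent trigonometric series on $\R^2$ with pairwise distinct frequencies. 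If $A=0$, then $h\equiv 0$, and averaging $h$ against the conjugate character over large boxes (uniqueness of Bohr--Fourier coefficients, with the interchange of limit and sum justified by absolute convergence) yields $c_{m,n}\,(\sigma(m,n)\varphi,\varphi)_{L^2}=0$ for every $(m,n)$; for the Gaussian $\varphi$ one has $(\sigma(m,n)\varphi,\varphi)_{L^2}\neq 0$ always, whence $c\equiv 0$. With this detail supplied --- it is in effect the trace argument underlying the cited proof --- your argument is a complete and correct proof of the theorem.
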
		

Let $\omega$  be a BMT-weight function and let $\eta$ be a weight function.  The space  $\mathcal{S}^{[\omega]}_{[\eta]}$ is called \emph{Gabor accessible} if there exist $\psi, \gamma \in \mathcal{S}^{[\omega]}_{[\eta]}$ and $a, b > 0$ such that $(\psi,\gamma)$  is a pair of dual windows on $a\Z \times b\Z$. We now use a fundamental result of Janssen \cite{Janssen} (see also \cite{B-J-GaborUnimodWindDecay})  to give a  growth condition on  $\omega$ and $\eta$ which ensures that  $\mathcal{S}^{[\omega]}_{[\eta]}$ is Gabor accessible.

\begin{proposition}\label{GA-1} Let $\omega$  be a BMT-weight function and $\eta$ be a weight function. The space $\mathcal{S}^{[\omega]}_{[\eta]}$ is Gabor accessible if $\omega(t) = o(t^2)$ and $\eta(t) = o(t^2)$ ($\omega(t) = O(t^2)$ and $\eta(t) = O(t^2)$).
\end{proposition}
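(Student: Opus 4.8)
The plan is to exhibit a single pair of dual windows lying in $\mathcal{S}^{[\omega]}_{[\eta]}$, taking as generator the Gaussian $g(x) = e^{-\pi x^2}$. First I would check that $g \in \mathcal{S}^{[\omega]}_{[\eta]}$ under the stated hypotheses. Since $\widehat{g} = g$, both $g$ and $\widehat{g}$ decay like $e^{-\pi x^2}$, and the derivatives $g^{(p)}$ are Hermite functions satisfying the sharp $\mathcal{S}^{1/2}_{1/2}$-type bounds. Membership in $\mathcal{S}^{[\omega]}_{[\eta]}$ amounts to controlling the decay of $g$ by $\eta$ (through the seminorm factor $e^{\frac{1}{\lambda}\eta(x)}$) and, via the smoothness estimates governed by $\phi^*$, the decay of $\widehat{g}$ by $\omega$. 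As $\eta(t) = o(t^2)$ and $\omega(t) = o(t^2)$ (respectively $O(t^2)$ in the Roumieu case), the Gaussian decay strictly dominates both weights, so all defining estimates hold and $g \in \mathcal{S}^{[\omega]}_{[\eta]}$; the Gevrey instance of this is exactly the classical fact that $g \in \Sigma^{\mu}_{\tau}$ for $\mu,\tau > 1/2$.

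Next I would fix lattice parameters $a, b > 0$ with $ab < 1$. By the classical density theorem for Gaussian Gabor systems (Lyubarskii; Seip--Wallst\'en), $\mathcal{G}(g, a, b)$ is then a Gabor frame for $L^2(\R)$, so its frame operator $S = D_g \circ C_g$ is a topological isomorphism of $L^2(\R)$, and the canonical dual window $\gamma = S^{-1} g$ satisfies $D_\gamma \circ C_g = \operatorname{id}_{L^2(\R)}$, that is, \eqref{comp11} holds for $(g, \gamma)$. By the symmetry noted after \eqref{comp11}, $(\gamma, g)$ is then also a pair of dual windows, so it remains only to place $\gamma$ in $\mathcal{S}^{[\omega]}_{[\eta]}$.

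The crux---and the only place the quantitative threshold $t^2$ really enters---is to verify that the dual window $\gamma$ again lies in $\mathcal{S}^{[\omega]}_{[\eta]}$. This does not follow from soft arguments, since the canonical dual of a Gaussian Gabor frame need not inherit the window's decay; instead I would invoke the result of Janssen \cite{Janssen} (see also \cite{B-J-GaborUnimodWindDecay}) on the simultaneous time--frequency decay of dual Gabor windows. Because membership in $\mathcal{S}^{[\omega]}_{[\eta]}$ is equivalent to decay of $\gamma$ controlled by $\eta$ together with decay of $\widehat{\gamma}$ controlled by $\omega$, and Janssen's theorem transfers precisely these joint time- and frequency-decay estimates from $g$ to $\gamma$, one concludes $\gamma \in \mathcal{S}^{[\omega]}_{[\eta]}$. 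The bound $\omega(t) = o(t^2)$, $\eta(t) = o(t^2)$ (respectively $O(t^2)$) is exactly the range in which such joint estimates persist, the Gaussian being extremal for simultaneous time--frequency concentration; this transfer is the main obstacle, everything else being routine verification of seminorm bounds. (If desired, the dual-window relation itself can equivalently be recorded through the Wexler--Raz biorthogonality relations of Theorem \ref{l:WZBiOrthRel}.)
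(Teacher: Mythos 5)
Your proposal is correct and follows essentially the same route as the paper: take the Gaussian $\psi(x)=e^{-\pi x^2}$ on a lattice with $ab<1$ and invoke Janssen's result \cite{Janssen} to obtain a dual window with Gaussian-type time--frequency decay. The paper packages this slightly more economically---it observes that the hypotheses give $\mathcal{S}^{1/2}_{1/2}\subseteq\mathcal{S}^{[\omega]}_{[\eta]}$, so that both $\psi$ and the dual window $\gamma\in\mathcal{S}^{1/2}_{1/2}$ from \cite[Proposition B]{Janssen} land in $\mathcal{S}^{[\omega]}_{[\eta]}$ at once, bypassing your separate verification of the Gaussian's membership, the appeal to the Lyubarskii/Seip--Wallst\'en density theorem (which Janssen's result already subsumes), and the Fourier-side characterization of $\mathcal{S}^{[\omega]}_{[\eta]}$ you use to place $\gamma$ in the space.
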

\begin{proof}
Since $\mathcal{S}^{1/2}_{1/2} \subseteq \mathcal{S}^{[\omega]}_{[\eta]}$, it suffices to show that $\mathcal{S}^{1/2}_{1/2}$ is Gabor accessible. Let $\psi(x) = e^{-\pi x^2}$, $x \in \R$, be the Gaussian and fix $a,b > 0$ with $ab <1$. Then,  $\psi \in  \mathcal{S}^{1/2}_{1/2}$ and, by \cite[Proposition B and its proof]{Janssen},  there exists $\gamma \in  \mathcal{S}^{1/2}_{1/2}$ such that $(\psi, \gamma)$ is a pair of dual windows on $a\Z \times b\Z$.  
\end{proof}

\begin{remark}\label{GAR}
It is an open problem whether  $\mathcal{S}^{[\omega]}_{[\eta]}$ is Gabor accessible. In particular, it is not known whether $\mathcal{S}^\mu_\tau$, $\mu + \tau \geq 1$,  is always Gabor accessible.
\end{remark}
Finally, we recall a result from \cite{D-N-WeighPLBUltradiffFuncMultSp} about the mapping properties of the analysis and synthesis operators on $\mathcal{Z}^{[\omega]}_{[\eta]}$. To this end, we define, for $a,b >0$, the weight matrices
$$
	 \mathcal{A}^{a,b}_{(\omega), (\eta)} = ((e^{n \omega(b |l|) - N \eta(a |k|)})_{k, l \in \Z})_{n, N \in \N} 
$$
and
$$
	 \mathcal{A}^{a,b}_{\{\omega\}, \{\eta\}} = ((e^{\frac{1}{N} \omega(b |l|) - \frac{1}{n} \eta(a |k|)})_{k, l \in \Z})_{n, N \in \N}. 
$$

	\begin{proposition}[{\cite[Proposition 7.6]{D-N-WeighPLBUltradiffFuncMultSp}}]
		\label{p:GaborCont}
		Let $\psi \in \mathcal{S}^{[\omega]}_{[\eta]}$ and let $a,b> 0$. Then, the mappings
		$$
				C_{\psi}^{a,b} : \mathcal{Z}^{[\omega]}_{[\eta]} \to \lambda^\infty(\mathcal{A}^{a,b}_{[\omega], [\eta]}) \qquad  \mbox{and} \qquad D_{\psi}^{a,b} : \lambda^\infty(\mathcal{A}^{a,b}_{[\omega], [\eta]}) \to \mathcal{Z}^{[\omega]}_{[\eta]}
				$$
		are continuous.
			\end{proposition}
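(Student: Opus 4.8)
The plan is to handle the analysis operator $C^{a,b}_\psi$ and the synthesis operator $D^{a,b}_\psi$ by a single mechanism: I express the Gabor coefficients as sampled Fourier transforms of products and then transfer the smoothness and decay of a multiplier into the frequency-decay and position-growth encoded by the weight matrix $\mathcal{A}^{a,b}_{[\omega],[\eta]}$. I carry out the Beurling case $\mathcal{Z}^{(\omega)}_{(\eta)}$ in detail; the Roumieu case is identical after interchanging the roles of the projective and inductive indices. Throughout I use that the topology of $\mathcal{Z}^{[\omega]}_{[\eta]}$ is the operator topology inherited from $L(\mathcal{S}^{[\omega]}_{[\eta]},\mathcal{S}^{[\omega]}_{[\eta]})$ via $f\mapsto(\varphi\mapsto\varphi\cdot f)$, that $\mathcal{S}^{[\omega]}_{[\eta]}$ is translation- and modulation-invariant, and the standard fact that the Fourier transform maps $\mathcal{S}^{(\omega)}_{(\eta)}$ continuously onto $\mathcal{S}^{(\eta)}_{(\omega)}$, so that smoothness (controlled by $\omega$) becomes frequency decay of the form $e^{-n\omega(|\xi|)}$.

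For the analysis operator, the starting point is the elementary identity
$$(f,T_{ak}M_{bl}\psi)_{L^2}=e^{2\pi i ab\,kl}\,\mathcal{F}\!\left[f\cdot\overline{T_{ak}\psi}\right](bl),$$
which exhibits each coefficient as the Fourier transform of the product $g_k:=f\cdot\overline{T_{ak}\psi}$ evaluated at the frequency $bl$. Since $\overline{T_{ak}\psi}\in\mathcal{S}^{(\omega)}_{(\eta)}$ and $f$ is a multiplier, $g_k\in\mathcal{S}^{(\omega)}_{(\eta)}$, and by continuity of $\varphi\mapsto\varphi\cdot f$ each seminorm of $g_k$ is bounded by a fixed seminorm of $\overline{T_{ak}\psi}$ times a constant controlled by a $\mathcal{Z}^{(\omega)}_{(\eta)}$-seminorm of $f$. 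The quantitative Fourier mapping property gives, for every $n$, a seminorm $q$ on $\mathcal{S}^{(\omega)}_{(\eta)}$ with $\sup_\xi|\mathcal{F}[g](\xi)|e^{n\omega(|\xi|)}\le C\,q(g)$; evaluating at $\xi=bl$ converts the smoothness of $g_k$ into the decay $e^{-n\omega(b|l|)}$. The $k$-dependence is controlled by translation estimates on the fixed window: using the quasi-subadditivity $\eta(x+y)\lesssim\eta(x)+\eta(y)$ coming from axiom $(\alpha)$, the relevant seminorm of $\overline{T_{ak}\psi}$ grows at most like $e^{N\eta(a|k|)}$. Combining these bounds yields
$$|(f,T_{ak}M_{bl}\psi)_{L^2}|\le C_n\,e^{-n\omega(b|l|)+N\eta(a|k|)},\qquad (k,l)\in\Z^2,$$
with $N=N(n)$ and $C_n$ proportional to a $\mathcal{Z}^{(\omega)}_{(\eta)}$-seminorm of $f$. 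This is exactly the membership $C^{a,b}_\psi f\in\lambda^\infty(\mathcal{A}^{a,b}_{(\omega),(\eta)})$ together with the required continuity.

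For the synthesis operator I show that for $(c_{k,l})\in\lambda^\infty(\mathcal{A}^{a,b}_{(\omega),(\eta)})$ the series $\sum_{k,l}c_{k,l}T_{ak}M_{bl}\psi$ converges absolutely in the complete space $\mathcal{Z}^{(\omega)}_{(\eta)}$, that is, the corresponding series of multiplication operators converges in $L(\mathcal{S}^{(\omega)}_{(\eta)},\mathcal{S}^{(\omega)}_{(\eta)})$. Each $T_{ak}M_{bl}\psi$ lies in $\mathcal{S}^{(\omega)}_{(\eta)}\subseteq\mathcal{Z}^{(\omega)}_{(\eta)}$, and its seminorm as a multiplier is dominated by $e^{n\omega(b|l|)+N\eta(a|k|)}$: the modulation $M_{bl}$ costs $e^{n\omega(b|l|)}$ in the smoothness direction (differentiation produces powers of $bl$, resummed via the Legendre duality $(\phi^*)^*=\phi$), while the translation $T_{ak}$ costs $e^{N\eta(a|k|)}$ in the position weight. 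Since for each $n$ there is $N$ with $|c_{k,l}|\le C\,e^{-n\omega(b|l|)+N\eta(a|k|)}$, choosing the smoothness level large relative to the target seminorm makes both the $l$-sum and the $k$-sum converge, which gives convergence together with the continuity of $D^{a,b}_\psi$.

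The genuine difficulty is the weight bookkeeping rather than the functional-analytic scaffolding. I expect the two delicate points to be: (i) turning the Young-conjugate seminorms defining $\mathcal{S}^{(\omega)}_{(\eta)}$ into the clean exponential bounds $e^{\pm n\omega}$ and $e^{\pm N\eta}$ through the Legendre duality between $\phi$ and $\phi^*$; and (ii) proving that a single multiplier $f$ produces, for the whole family $(T_{ak}\psi)_k$, seminorm bounds growing only like $e^{N\eta(a|k|)}$ uniformly in $k$. The latter is the crux: it hinges on combining the operator bound for $\varphi\mapsto\varphi\cdot f$ with the quasi-subadditivity of $\eta$ from axiom $(\alpha)$, and it is precisely where the choice of the index $N=N(n)$ is forced.
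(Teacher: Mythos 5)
Two remarks before the main point. First, this paper does not actually prove Proposition~\ref{p:GaborCont}: it is quoted from \cite[Proposition~7.6]{D-N-WeighPLBUltradiffFuncMultSp}, where it is obtained from the mapping properties of the short-time Fourier transform on $\mathcal{Z}^{[\omega]}_{[\eta]}$; your identity $(f,T_{ak}M_{bl}\psi)_{L^2}=e^{2\pi i ab\,kl}\,\mathcal{F}\bigl[f\cdot\overline{T_{ak}\psi}\bigr](bl)$ is exactly $V_{\psi}f(ak,bl)$ up to a phase, so your analysis half follows essentially the same route, and its estimates are the right ones. Two caveats there: (a) the Fourier transform cannot be said to map $\mathcal{S}^{(\omega)}_{(\eta)}$ onto ``$\mathcal{S}^{(\eta)}_{(\omega)}$'', since $\eta$ is only a weight function (no convexity axiom $(\delta)$), so that target space is not defined in this framework --- but you only use the one-sided implication (uniform $\phi^*$-bounds on derivatives plus integrable decay give $|\hat g(\xi)|\lesssim e^{-n\omega(|\xi|)}$ via Legendre duality and $(\alpha)$), which is fine; (b) for \emph{continuity}, as opposed to well-definedness of $C^{a,b}_\psi f$, you cannot bound through operator-topology seminorms $f\mapsto\sup_{\varphi\in B}q(f\varphi)$: for fixed $N$ the normalized family $\{e^{-N\eta(a|k|)}T_{ak}\psi\}_{k}$ is \emph{not} bounded in the Fr\'echet space $\mathcal{S}^{(\omega)}_{(\eta)}$, because the translation cost in the step with decay index $h'$ is $e^{(L/h')\eta(a|k|)}$ and $L/h'\to\infty$. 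You must instead invoke the identification of the multiplier topology with the $(\PLS)$-topology $\varprojlim_{h\to 0^+}\varinjlim_{\lambda\to 0^-}\mathcal{S}^{\omega,h}_{\eta,\lambda}$ (recalled in the paper) and run the estimate Banach step by Banach step; then your Leibniz/subadditivity computation does give, for each $n$, continuity of each step into some $\ell^\infty$-step of $k^\infty(a_n)$, with $N=N(n,\lambda)$ as you predict.

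The genuine gap is in the synthesis half, where your weight bookkeeping does not close. You price the translate $T_{ak}$ at $e^{N\eta(a|k|)}$, which is correct if $T_{ak}M_{bl}\psi$ is measured in the \emph{decay}-weighted seminorms of $\mathcal{S}^{(\omega)}_{(\eta)}$; but the coefficients themselves satisfy only $|c_{k,l}|\leq Ce^{-n\omega(b|l|)+N\eta(a|k|)}$, so the $k$-sum would be of size $\sum_k e^{2N\eta(a|k|)}$ and diverges, and no choice of ``smoothness level'' repairs this --- smoothness controls only the $l$-sum. The correct mechanism is that the target is $\mathcal{Z}^{(\omega)}_{(\eta)}$, whose inductive steps carry \emph{growth} weights $e^{\frac{1}{|\lambda|}\eta}$, $\lambda<0$, and the quantifier structure lets you choose $\lambda$ \emph{after} $N$: given the target index $h$, first pick $n$ with $n\geq A/h+\sigma$, where $A$ comes from absorbing the modulation cost $e^{\frac{1}{h}\omega(2\pi b|l|)}$ via $(\alpha)$ and $\sigma$ ensures $\sum_l e^{-\sigma\omega(b|l|)}<\infty$ by $(\gamma)$; receive $N=N(n)$ from the coefficient bound; then choose $\lambda<0$ so close to $0$ that $\frac{1}{L|\lambda|}\geq N+\sigma'$. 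In the step norm of $\mathcal{S}^{\omega,h}_{\eta,\lambda}$, the arbitrarily strong decay of $\psi\in\mathcal{S}^{(\omega)}_{(\eta)}$ combined with the quasi-subadditivity $\eta(a|k|)\leq L\bigl(\eta(|y|)+\eta(|y+ak|)\bigr)+L$ yields $q_{h,\lambda}(T_{ak}M_{bl}\psi)\lesssim e^{\frac{A}{h}\omega(b|l|)}\,e^{-\frac{1}{L|\lambda|}\eta(a|k|)}$, i.e.\ the translated windows \emph{decay} in $k$ in the step norm, which absorbs $e^{N\eta(a|k|)}$ and makes both sums converge, giving continuity of each step $\ell^\infty(a_{n,N})\to\mathcal{S}^{\omega,h}_{\eta,\lambda}$ and hence of $D^{a,b}_\psi$ into $\varinjlim_{\lambda\to 0^-}\mathcal{S}^{\omega,h}_{\eta,\lambda}$ for every $h$. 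So your overall architecture is sound, but as written the $k$-estimate for $D^{a,b}_\psi$ would fail, and the fix is precisely the interplay between the coefficient index $N$ and the inductive growth index $\lambda$ of the target.
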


\subsection{Sequence space representations of $\mathcal{Z}^{(\omega)}_{(\eta)}$}

Given a weight function $\omega$, we define $ \alpha(\omega)= (\omega(i))_{i \in \N}$. Conditions $(\alpha)$ and $(\gamma)$ imply that $\alpha(\omega)$ is a stable exponent sequence that satisfies \eqref{nucls} and thus (N). We are ready to show the second main result of this article.

	\begin{theorem}
		\label{t:SeqSpRep}
		Let $\omega$  be a BMT-weight function and let $\eta$ be a weight function  such that $\mathcal{S}^{(\omega)}_{(\eta)}$ is Gabor accessible.
		Then, 
	\begin{equation}
	\label{SSR}
		\mathcal{Z}^{(\omega)}_{(\eta)} \cong \Lambda_\infty(\alpha(\omega), \alpha(\eta)).
	\end{equation}
	\end{theorem}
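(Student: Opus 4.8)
The plan is to realize $\mathcal{Z}^{(\omega)}_{(\eta)}$ as a complemented subspace of the power series space $\Lambda_\infty(\alpha(\omega),\alpha(\eta))$ and, conversely, to realize $\Lambda_\infty(\alpha(\omega),\alpha(\eta))$ as a complemented subspace of $\mathcal{Z}^{(\omega)}_{(\eta)}$, and then to close the argument with the Pe\l czy\'nski-Vogt decomposition result (Theorem \ref{t:PelczynskiPLS}). The two complementations will come from the two fundamental Gabor identities: the reconstruction formula for a pair of dual windows and the Wexler-Raz biorthogonality relations (Theorem \ref{l:WZBiOrthRel}), applied respectively on a lattice and on its dual lattice.

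First I would record the sequence space identification. Since $\mathcal{S}^{(\omega)}_{(\eta)}$ is Gabor accessible, fix $\psi,\gamma \in \mathcal{S}^{(\omega)}_{(\eta)}$ and $a,b>0$ such that $(\psi,\gamma)$ is a pair of dual windows on $a\Z \times b\Z$. For any $c,d>0$ I claim that
\[ \lambda^\infty(\mathcal{A}^{c,d}_{(\omega), (\eta)}) \cong \Lambda_\infty(\alpha(\omega), \alpha(\eta)). \]
Indeed, the weight matrix system $\mathcal{A}^{c,d}_{(\omega),(\eta)}$ carries the entries $e^{n\omega(d|l|) - N\eta(c|k|)}$ on $\Z^2$, whereas $\Lambda_\infty(\alpha(\omega),\alpha(\eta)) = \lambda^\infty(v^\infty_{\alpha(\omega)}, v^\infty_{\alpha(\eta)})$ carries the entries $e^{n\omega(i) - N\eta(j)}$ on $\N^2$. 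The doubling property $(\alpha)$ gives $\omega(\lambda t) \leq C_\lambda \omega(t) + C_\lambda$ for every fixed $\lambda > 0$, and likewise for $\eta$, so the scalings $c,d$ together with a reindexing $\Z \cong \N$ (under which $|l|$ and the new index are comparable) only shift the indices $n,N$ by bounded amounts. Hence the two weight matrix systems are equivalent and define the same lcHs. In particular, both $\lambda^\infty(\mathcal{A}^{a,b}_{(\omega),(\eta)})$ and $\lambda^\infty(\mathcal{A}^{1/b,1/a}_{(\omega),(\eta)})$ are isomorphic to $\Lambda_\infty(\alpha(\omega),\alpha(\eta))$.

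Next comes the first complementation. By Proposition \ref{p:GaborCont}, the maps $C^{a,b}_\psi : \mathcal{Z}^{(\omega)}_{(\eta)} \to \lambda^\infty(\mathcal{A}^{a,b}_{(\omega),(\eta)})$ and $D^{a,b}_\gamma : \lambda^\infty(\mathcal{A}^{a,b}_{(\omega),(\eta)}) \to \mathcal{Z}^{(\omega)}_{(\eta)}$ are continuous. Since $(\psi,\gamma)$ is a pair of dual windows, $D^{a,b}_\gamma \circ C^{a,b}_\psi = \id$ on $L^2(\R)$, and this identity extends to $\mathcal{Z}^{(\omega)}_{(\eta)}$ by continuity together with the density of $\mathcal{S}^{(\omega)}_{(\eta)}$ in its multiplier space $\mathcal{Z}^{(\omega)}_{(\eta)}$. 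Thus $C^{a,b}_\psi$ is a topological embedding admitting the continuous left inverse $D^{a,b}_\gamma$, whence $\mathcal{Z}^{(\omega)}_{(\eta)} \compl \lambda^\infty(\mathcal{A}^{a,b}_{(\omega),(\eta)}) \cong \Lambda_\infty(\alpha(\omega),\alpha(\eta))$. For the reverse complementation I pass to the dual lattice $\frac{1}{b}\Z \times \frac{1}{a}\Z$: by Proposition \ref{p:GaborCont} (with parameters $1/b, 1/a$) the maps $C^{1/b,1/a}_\psi$ and $D^{1/b,1/a}_\gamma$ are continuous between $\mathcal{Z}^{(\omega)}_{(\eta)}$ and $\lambda^\infty(\mathcal{A}^{1/b,1/a}_{(\omega),(\eta)})$, while the Wexler-Raz relation \eqref{WR} gives $\frac{1}{ab} C^{1/b,1/a}_\psi \circ D^{1/b,1/a}_\gamma = \id$ on $\ell^2(\Z^2)$. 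Computing matrix coefficients on the basis vectors $e_{k,l}$ (the interchange of sum and coefficient functional being justified by continuity of the latter on $\mathcal{Z}^{(\omega)}_{(\eta)}$) shows that this identity persists on $\lambda^\infty(\mathcal{A}^{1/b,1/a}_{(\omega),(\eta)})$, so $D^{1/b,1/a}_\gamma$ is a topological embedding with continuous left inverse $\frac{1}{ab} C^{1/b,1/a}_\psi$. Hence $\Lambda_\infty(\alpha(\omega),\alpha(\eta)) \cong \lambda^\infty(\mathcal{A}^{1/b,1/a}_{(\omega),(\eta)}) \compl \mathcal{Z}^{(\omega)}_{(\eta)}$.

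Finally, as noted after the definition of $\alpha(\omega)$, conditions $(\alpha)$ and $(\gamma)$ ensure that $\alpha(\omega)$ is stable and that both $\alpha(\omega)$ and $\alpha(\eta)$ satisfy \eqref{nucls}, hence (N). Applying Theorem \ref{t:PelczynskiPLS} with $X = \mathcal{Z}^{(\omega)}_{(\eta)}$ to the two complementations established above yields $\mathcal{Z}^{(\omega)}_{(\eta)} \cong \Lambda_\infty(\alpha(\omega),\alpha(\eta))$, which is \eqref{SSR}. I expect the main obstacle to be transferring the two Hilbert-space frame identities to the $(\PLS)$-space $\mathcal{Z}^{(\omega)}_{(\eta)}$—that is, justifying the reconstruction formula on $\mathcal{Z}^{(\omega)}_{(\eta)}$ (via density) and the Wexler-Raz identity on the sup-type sequence space (via the matrix-coefficient argument)—together with the structural point that the two complementations must be obtained on a lattice and on its dual: the reconstruction formula alone gives only one inclusion, and it is the biorthogonality on the dual lattice that supplies the opposite one.
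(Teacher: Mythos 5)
Your proposal is correct and follows essentially the same route as the paper: both obtain $\mathcal{Z}^{(\omega)}_{(\eta)} \compl \lambda^\infty(\mathcal{A}^{a,b}_{(\omega),(\eta)})$ from the dual-window reconstruction identity, obtain $\lambda^\infty(\mathcal{A}^{1/b,1/a}_{(\omega),(\eta)}) \compl \mathcal{Z}^{(\omega)}_{(\eta)}$ from the Wexler--Raz relations on the dual lattice, identify both sequence spaces with $\Lambda_\infty(\alpha(\omega),\alpha(\eta))$ via condition $(\alpha)$, and conclude with Theorem \ref{t:PelczynskiPLS}. The only difference is cosmetic: you spell out the transfer of the two $L^2$ identities to $\mathcal{Z}^{(\omega)}_{(\eta)}$ and to the sequence space (via density and matrix coefficients), a step the paper treats as immediate from \eqref{comp11} and \eqref{WR} together with the continuity statements of Proposition \ref{p:GaborCont}.
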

	\begin{proof}
	As  $\mathcal{S}_{(\eta)}^{(\omega)}$ is Gabor accessible, there are $\psi, \gamma \in \mathcal{S}_{(\eta)}^{(\omega)}$ and $a,b >0$ such that $(\psi,\gamma)$ is a pair of dual windows on $a \Z \times b \Z$. 
Condition $(\alpha)$ implies that 
$$
\Lambda_\infty( \alpha(\omega), \alpha(\eta)) \cong  \lambda^\infty(\mathcal{A}^{a,b}_{(\omega), (\eta)}) \cong \lambda^\infty(\mathcal{A}^{1/b,1/a}_{(\omega), (\eta)}).
$$  
Hence, by Theorem \ref{t:PelczynskiPLS}, it suffices to verify that $\mathcal{Z}_{(\eta)}^{(\omega)} \compl \lambda^\infty(\mathcal{A}^{a,b}_{(\omega), (\eta)})$ and $\lambda^\infty(\mathcal{A}^{1/b,1/a}_{(\omega), (\eta)}) \compl \mathcal{Z}_{(\eta)}^{(\omega)}$. Proposition \ref{p:GaborCont} shows that the mappings
$$
 C^{a,b}_\psi: \mathcal{Z}_{(\eta)}^{(\omega)} \rightarrow \lambda^\infty(\mathcal{A}^{a,b}_{(\omega), (\eta)}) \qquad \mbox{and} \qquad 
D^{a,b}_\gamma:  \lambda^\infty(\mathcal{A}^{a,b}_{(\omega), (\eta)}) \to \mathcal{Z}_{(\eta)}^{(\omega)}
$$
are continuous, and in view of \eqref{comp11}, it holds that $D^{a,b}_\gamma \circ C^{a,b}_\psi = \operatorname{id}_{ \mathcal{Z}_{(\eta)}^{(\omega)}}$. This shows that  $\mathcal{Z}_{(\eta)}^{(\omega)} \compl \lambda^\infty(\mathcal{A}^{a,b}_{(\omega), (\eta)})$ . Another application of Proposition \ref{p:GaborCont} gives that the mappings
$$
 C^{\frac{1}{b}, \frac{1}{a}}_\psi:  \mathcal{Z}_{(\eta)}^{(\omega)} \rightarrow \lambda^\infty(\mathcal{A}^{1/b,1/a}_{(\omega), (\eta)}) \qquad \mbox{and} \qquad 
D^{\frac{1}{b}, \frac{1}{a}}_\gamma:  \lambda^\infty(\mathcal{A}^{1/b,1/a}_{(\omega), (\eta)}) \to  \mathcal{Z}_{(\eta)}^{(\omega)}
$$
are continuous, and by \eqref{WR}, it holds that $(ab)^{-1}C^{\frac{1}{b}, \frac{1}{a}}_\psi \circ D^{\frac{1}{b},\frac{1}{a}}_\gamma =  \operatorname{id}_{   \lambda^\infty(\mathcal{A}^{1/b,1/a}_{(\omega), (\eta)})}$. 
Therefore $\lambda^\infty(\mathcal{A}^{1/b,1/a}_{(\omega), (\eta)}) \compl \mathcal{Z}_{(\eta)}^{(\omega)}$.
\end{proof}

\begin{corollary} \label{main-text} Let $\omega$  be a BMT-weight function and let $\eta$ be a weight function  such that  $\omega(t) = o(t^2)$ and $\eta(t) = o(t^2)$. Then, the sequence space representation \eqref{SSR} holds.
\end{corollary}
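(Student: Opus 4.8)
The plan is to derive this statement directly from Theorem~\ref{t:SeqSpRep}, whose single hypothesis is that $\mathcal{S}^{(\omega)}_{(\eta)}$ be Gabor accessible. Thus the only thing to check is that the concrete growth conditions $\omega(t) = o(t^2)$ and $\eta(t) = o(t^2)$ guarantee Gabor accessibility in the Beurling case.

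First I would apply Proposition~\ref{GA-1}. In that statement the $o(t^2)$-version of the condition (as opposed to the parenthetical $O(t^2)$-version) is precisely the one attached to the Beurling type, so the hypotheses $\omega(t) = o(t^2)$ and $\eta(t) = o(t^2)$ yield that $\mathcal{S}^{(\omega)}_{(\eta)}$ is Gabor accessible. With this in hand, the second and final step is to invoke Theorem~\ref{t:SeqSpRep} verbatim, which gives the isomorphism \eqref{SSR}, namely $\mathcal{Z}^{(\omega)}_{(\eta)} \cong \Lambda_\infty(\alpha(\omega), \alpha(\eta))$.

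There is no genuine obstacle at this level: the corollary is merely the specialization of Theorem~\ref{t:SeqSpRep} to an explicitly verifiable sufficient condition. All the substance lives upstream --- in the Pe\l czy\'nski--Vogt decomposition (Theorem~\ref{t:PelczynskiPLS}) and the Gabor mapping estimates (Proposition~\ref{p:GaborCont}) feeding Theorem~\ref{t:SeqSpRep}, and in Janssen's result on dual windows of the Gaussian Gabor frame underlying Proposition~\ref{GA-1}. The sole point demanding any attention is bookkeeping: ensuring that the Beurling branch of the Gabor-accessibility criterion is the one matching the Beurling branch of the target isomorphism.
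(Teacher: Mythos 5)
Your proposal is correct and coincides exactly with the paper's own proof, which likewise derives the corollary by combining Proposition~\ref{GA-1} (the $o(t^2)$, Beurling branch) with Theorem~\ref{t:SeqSpRep}. Nothing further is needed.
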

\begin{proof}
This follows from Proposition \ref{GA-1} and Theorem \ref{t:SeqSpRep}.
\end{proof}
For $\omega=\omega_\mu$ and $\eta = \omega_\tau$ with $\mu,\tau > 1/2$, Corollary \ref{main-text} implies  Theorem \ref{main-intro} from the introduction (see Example \ref{CGS2}).

We end this article with several remarks. 
\begin{remark} Let $\omega$  be a BMT-weight function and let $\eta$ be a weight function
\begin{enumerate}
\item We believe that the sequence space representation \eqref{SSR} holds under the assumption that $\mathcal{S}^{(\omega)}_{(\eta)} \neq \{0\}$, but are unable to show this. This would follow from Theorem \ref{t:SeqSpRep} if it holds that  every non-trivial $\mathcal{S}^{(\omega)}_{(\eta)}$ is Gabor accessible (see Remark \ref{GAR}).
\item Similarly to the sequence space representation \eqref{SSR}, we believe that in the Roumieu case
\begin{equation}
\label{SSRR}
\mathcal{Z}^{\{\omega\}}_{\{\eta\}} \cong \Lambda_0(\alpha(\eta), \alpha(\omega)).
\end{equation}
In the case that $\mathcal{S}^{\{\omega\}}_{\{\eta\}}$ is Gabor accessible, by using the same argument as in Theorem \ref{t:SeqSpRep},  this would be implied by a positive solution to the question posed in Remark \ref{openqabst}. 
\item Let $\psi \in L^2(\R)$. We define
  \begin{align*}
    \psi_{0,l} &= T_{l} \psi, \qquad l \in \Z, \\
    \psi_{k,l} &= \frac{1}{\sqrt{2}} T_{\frac{l}{2}}(M_k+ (-1)^{k+l} M_{-k})\psi, \qquad (k,l)\in\N_{>0} \times \Z ,
  \end{align*}
and set $\mathcal{W}(\psi) = \{   \psi_{k,n} \mid  (k,n)\in\N\times\Z \}$. We call  $\mathcal{W}(\psi)$ a \emph{Wilson basis} if it is an orthonormal basis in $L^2(\R)$. See  \cite[Section 8.5]{G-FoundTFAnalysis} for more information.

A celebrated result of Daubechies, Jaffard, and Journ\'e \cite[Theorem 4.1]{D-J-J-WilsonBasisExpDecay} states that there  exists $\psi \in \mathcal{S}^{1}_{1}$ such that  $\mathcal{W}(\psi)$ is a Wilson basis. Furthermore, if $\omega$ or $\eta$ is non-quasianalytic, then $\mathcal{S}^{[\omega]}_{[\eta]}$
contains an element $\psi$ such that $\mathcal{W}(\psi)$ is a Wilson basis (cf.\ \cite[Corollary 8.5.5(b)]{G-FoundTFAnalysis}). If there exists $\psi \in \mathcal{S}^{[\omega]}_{[\eta]}$ such that  $\mathcal{W}(\psi)$ is a Wilson basis, then \cite[Corollary 8.5.4]{G-FoundTFAnalysis} implies that $\mathcal{S}^{[\omega]}_{[\eta]}$ is Gabor accessible.
 
Suppose that there exist $\psi \in \mathcal{S}^{[\omega]}_{[\eta]}$ such that  $\mathcal{W}(\psi)$ is a Wilson basis. By using Proposition \ref{p:GaborCont} and a similar argument as in the proofs of \cite[Proposition 4.8 and Theorem 5.4]{D-N-SeqSpRepBBSp}, one can show that 
$$
\Psi: \mathcal{Z}^{(\omega)}_{(\eta)}  \to \Lambda_\infty(\alpha(\eta), \alpha(\omega))  \qquad \mbox{and} \qquad \Psi:\mathcal{Z}^{\{\omega\}}_{\{\eta\}}  \to \Lambda_0(\alpha(\eta), \alpha(\omega))
$$
are topological isomorphisms, where $\Psi(f) = (c_{k,n})_{(k,n) \in \N^2}$ is defined via
$$
    c_{k,n}=
    \begin{cases}
    \langle f, \psi_{-\frac{k}{2},n} \rangle, & k\;\text{even}, \\
      \langle f, \psi_{\frac{k+1}{2},n} \rangle, & k\;\text{odd}.
    \end{cases}
$$
This gives a constructive proof of the sequence space representations \eqref{SSR} and \eqref{SSRR}. 
\end{enumerate}
\end{remark}

\end{document}